\newtheorem{theorem}{Theorem}[section]
\newtheorem{lemma}[theorem]{Lemma}
\newtheorem{corollary}[theorem]{Corollary}
\newtheorem{proposition}[theorem]{Proposition}
\newtheorem{remark}[theorem]{Remark}
\newtheorem{definition}[theorem]{Definition}
\numberwithin{equation}{section}
\def\p{\partial}  \def\ora{\overrightarrow}
		\def\ol{\overline}		\def\m{\mathbb}		
\def\O{\Omega}  \def\lam{\lambda}  \def\eps{\epsilon}  
\def\t{\tilde}	\def\wt{\widetilde}
\def\be{\begin{equation}}     \def\ee{\end{equation}}
\def\bea{\begin{eqnarray}}     \def\eea{\end{eqnarray}}
\def\beas{\begin{eqnarray*}}     \def\eeas{\end{eqnarray*}}
\title{Lifespan estimates via Neumann heat kernel}
\author[a]{Xin Yang\thanks{Email: yang2x2@ucmail.uc.edu}}
\author[b]{Zhengfang Zhou\thanks{Email: zfzhou@math.msu.edu}}
\affil[a]{Department of Mathematical Sciences, University of Cincinati, Cincinnati, OH 45221, USA}
\affil[b]{Department of Mathematics, Michigan State University, East Lansing, MI 48824, USA}
\date{}
\begin{document}
\maketitle

\begin{abstract}
This paper studies the lower bound of the lifespan $T^{*}$ for the heat equation $u_t=\Delta u$ in a bounded domain $\Omega\subset\mathbb{R}^{n}(n\geq 2)$ with positive initial data $u_{0}$ and a nonlinear radiation condition on partial boundary: the normal derivative $\partial u/\partial n=u^{q}$ on $\Gamma_1\subseteq \partial\Omega$ for some $q>1$, while $\partial u/\partial n=0$ on the other part of the boundary. Previously, under the convexity assumption of $\Omega$, the asymptotic behaviors of $T^{*}$ on the maximum $M_{0}$ of $u_{0}$ and the surface area $|\Gamma_{1}|$ of $\Gamma_{1}$ were explored. In this paper, without the convexity requirement of $\Omega$, we will show that as $M_{0}\rightarrow 0^{+}$, $T^{*}$ is  at least of order $M_{0}^{-(q-1)}$ which is optimal. Meanwhile, we will also prove that as $|\Gamma_{1}|\rightarrow 0^{+}$, $T^{*}$ is at least of order $|\Gamma_{1}|^{-\frac{1}{n-1}}$ for $n\geq 3$ and $|\Gamma_{1}|^{-1}\big/\ln\big(|\Gamma_{1}|^{-1}\big)$ for $n=2$. The order on $|\Gamma_{1}|$ when $n=2$ is almost optimal. The proofs are carried out by analyzing the representation formula of $u$ in terms of the Neumann heat kernel. 
\end{abstract}


\bigskip
\bigskip

\section{Introduction}  
\label{Sec, introduction}
\subsection{Problem and Results}
In this paper, $\Omega$ represents a bounded open subset in $\mathbb{R}^{n}$ ($n\geq 2$) with $C^{2}$ boundary $\partial\Omega$. $\Gamma_1$ and $\Gamma_2$ denote two disjoint relatively open subsets of $\p\O$ such that $\Gamma_{1}\neq \emptyset$ and $\ol{\Gamma}_1\cup\ol{\Gamma}_2=\p\O$. Moreover, the interface $\wt{\Gamma}$, defined by $\widetilde{\Gamma}=\ol{\Gamma}_1\cap\ol{\Gamma}_2$, is the common boundary of $\Gamma_{1}$ and $\Gamma_{2}$. We assume $\wt{\Gamma}$
is $C^{1}$ as the boundary of $\Gamma_1$ or $\Gamma_2$. 
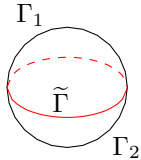
\begin{figure}[hbtp]
\centering
\begin{tikzpicture}[scale=0.8]
\draw [domain=0:360] plot ({cos(\x)}, {sin(\x)});
\draw [color=red] [dashed] [domain=-1:1] plot ({\x},{((1-(\x)^2)/4)^(1/2)});
\draw [color=red] [domain=-1:1] plot ({\x},{-((1-(\x)^2)/4)^(1/2)});
\draw (-1,1.2) node [right] {$\Gamma_{1}$};
\draw (0.6,-1) node [right] {$\Gamma_{2}$};
\draw (-0.4,-0.2) node [right] {$\wt{\Gamma}$};
\end{tikzpicture}
\caption{Interface $\widetilde{\Gamma}$}
\label{Fig, interface}
\end{figure}
For example (see Figure \ref{Fig, interface}), if $\O$ is a ball, and $\Gamma_{1}$ and $\Gamma_{2}$ are the open upper and lower hemispheres, then the interface $\wt{\Gamma}$ is the equator.

We study the following problem:
\be\label{Prob}
\left\{\begin{array}{lll}
(\p_{t}-\Delta_{x})u(x,t)=0 &\text{in}& \Omega\times (0,T], \vspace{0.02in}\\
\dfrac{\partial u(x,t)}{\partial n(x)}=u^{q}(x,t) &\text{on}& \Gamma_1\times (0,T],\vspace{0.04in}\\
\dfrac{\partial u(x,t)}{\partial n(x)}=0 &\text{on}& \Gamma_2\times (0,T], \vspace{0.02in}\\
u(x,0)=u_0(x) &\text{in}& \Omega,
\end{array}\right.\ee
where 
\be\label{assumption on prob}
q>1,\, u_0\in C^{1}(\ol{\O}),\, u_0(x)\geq 0,\, u_0(x)\not\equiv 0. \ee
The normal derivative on the boundary is understood in the classical way: for any $(x,t)\in\p\O\times(0,T]$,
\be\label{Normal deri def, classical}
\frac{\p u(x,t)}{\p n(x)}\triangleq \lim_{h\rightarrow 0^{+}} \frac{u(x,t)-u(x-h\ora{n}(x),t)}{h},\ee
where $\ora{n}(x)$ denotes the exterior unit normal vector at $x$. $\p\O$ being $C^2$ ensures that $x-h\ora{n}(x)$ belongs to $\O$ when $h$ is positive and sufficiently small. 

Throughout this paper, we write
\be\label{initial max}
M_0= \max_{x\in\ol{\O}}u_0(x)\ee
and denote $M(t)$ to be the supremum of the solution $u$ to (\ref{Prob}) on $\ol{\O}\times[0,t]$:
\be\label{max function at time t}
M(t)=\sup_{(x,\tau)\in \ol{\O}\times[0,t]}u(x,\tau).\ee
$|\Gamma_{1}|$ represents the surface area of $\Gamma_{1}$, that is
\[|\Gamma_{1}|=\int_{\Gamma_{1}}\,dS,\]
where $dS$ means the surface integral. $\Phi$ refers to the heat kernel of $\m{R}^{n}$:
\be\label{fund soln of heat eq}
\Phi(x,t)=\frac{1}{(4\pi t)^{n/2}}\,\exp\Big(-\frac{|x|^2}{4t}\Big), \quad\forall\, (x,t)\in\m{R}^{n}\times(0,\infty).\ee
In addition, $C=C(a,b\dots)$ and $C_{i}=C_{i}(a,b\dots)$ stand for positive and finite constants which only depend on the parameters $a,b\dots$. One should also note that $C$ and $C_{i}$ may represent different constants in different places.

The recent paper \cite{YZ16} studied (\ref{Prob}) systematically and the motivation was the disaster of the Space Shuttle Columbia (see Figure \ref{Fig, Columbia}) in 2003, we refer the reader to that paper for the detailed discussion of the background.
\begin{figure}[!ht]
\centering
\begin{tikzpicture}[scale=0.6]
\begin{large}
\draw (-5,1/2)-- (0,1/2);
\draw (0,1/2)--(5/2,7/2);
\draw [domain=5/2:3] plot ({\x},{7/2+1/16-(\x-11/4)^2});
\draw (3,7/2)--(3,1/2);
\draw (3,1/2)--(4,1/2);
\draw (-5,-1/2)-- (0,-1/2);
\draw (0,-1/2)--(5/2,-7/2);
\draw [domain=5/2:3] plot ({\x},{-7/2-1/16+(\x-11/4)^2});
\draw (3,-7/2)--(3,-1/2);
\draw (3,-1/2)--(4,-1/2);
\draw [domain=90:270] plot ({cos(\x)-5},{1/2*sin(\x)});
\draw [dashed] [domain=0:360] plot({-5+1/8*cos(\x)},{1/2*sin(\x)});
\draw [dashed] [domain=0:360] plot({4+1/8*cos(\x)},{1/2*sin(\x)});
\path (-1.5,2.2) coordinate (A);
\draw (A) node [above] {$u$: temperature};
\draw [color=blue] [domain=110:150] plot({5/4+3/4*cos(\x)},{-2+3/4*sin(\x)});
\draw [color=blue] [domain=290:330] plot({5/4+3/4*cos(\x)},{-2+3/4*sin(\x)});
\path (5/4,-2+1/4) coordinate (B);
\draw (B) node [right] {$\Gamma_1$};
\path (5/4+1/4,2-1/4) coordinate (D);
\draw (D) node [below] {$\Omega$};
\path (-2,1/2) coordinate (F);
\draw (F) node [above] {$\Gamma_2$};
\path (1/2,-5/2) coordinate (E);
\end{large}
\end{tikzpicture}
\caption{Space Shuttle Columbia}
\label{Fig, Columbia}
\end{figure}
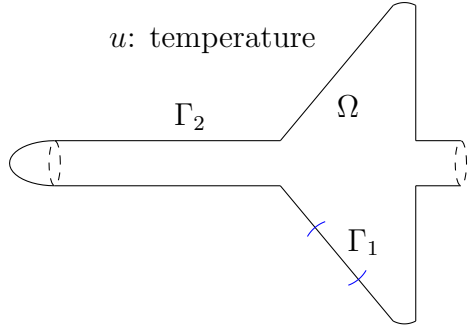
As a summary of its conclusions, \cite{YZ16} first established the local existence and uniqueness theory for (\ref{Prob}) in the following sense: there exist $T>0$ and a unique solution $u$ in $C^{2,1}\big(\O\times(0,T]\big)\bigcap C\big(\overline{\O}\times[0,T]\big)$ which satisfies (\ref{Prob}) pointwisely and also satisfies 
\be\label{interface bdry deri}
\frac{\p u(x,t)}{\p n(x)}=\frac{1}{2}\,u^{q}(x,t), \quad \forall\, (x,t)\in\wt{\Gamma}\times (0,T]. \ee
Moreover, it is shown that this unique solution $u$ is strictly positive when $t>0$. We want to remark here that the solution constructed in \cite{YZ16} through the heat potential technique automatically satisfies (\ref{interface bdry deri}) due to a generalized jump relation (See Theorem A.3 in \cite{YZ16}). The purpose of imposing this additional restriction (\ref{interface bdry deri}) to the local solution is to ensure the uniqueness through the Hopf's lemma, it is not clear whether the uniqueness will still hold without this restriction. After the local existence and uniqueness theory was set up, \cite{YZ16} also studied the blow-up phenomenon of (\ref{Prob}). If $T^{*}$ denotes the lifespan (maximal existence time) of the local solution $u$, then it is proved that $T^{*}<\infty$ and 
\be\label{supremum norm blows up}
\lim\limits_{t\nearrow T^{*}}M(t)=\infty.\ee
So the lifespan $T^{*}$ is exactly the blow-up time of $u$. Moreover, if $\min\limits_{\ol{\O}}u_{0}>0$, then $T^{*}$ has the following upper bound:
\be\label{upper bdd}
T^{*}\leq \frac{1}{(q-1)|\Gamma_1|}\int_{\O}u_0^{1-q}(x)\,dx.\ee
Meanwhile, \cite{YZ16} also provides a lower bound for $T^{*}$.

Later in \cite{YZ18}, it improves the lower bound as below.
\be\label{lower bdd, old}
T^{*}\geq \frac{C}{q-1}\ln\Big(1+(2M_{0})^{-4(q-1)}\,|\Gamma_{1}|^{-\frac{2}{n-1}}\Big),\ee
where $C=C(n,\O)$. Based on (\ref{upper bdd}) and (\ref{lower bdd, old}), if $q\rightarrow 1^{+}$, then both the upper and lower bounds of $T^{*}$ tends to infinity at the order $(q-1)^{-1}$, which implies the order of $T^{*}$ is exactly $(q-1)^{-1}$. On the other hand, if $|\Gamma_{1}|\rightarrow 0^{+}$, then the order of the upper bound is $|\Gamma_{1}|^{-1}$ while the lower bound only has a logarithmic order of $|\Gamma_{1}|^{-1}$. Similarly, if the initial maximum $M_{0}\rightarrow 0^{+}$, then the order of the upper bound (by assuming $u_{0}$ is comparable to $M_{0}$) is $M_{0}^{-(q-1)}$ while the lower bound only has a logarithmic order of $M_{0}^{-1}$. So it is natural to ask that as $|\Gamma_{1}|\rightarrow 0^{+}$ (resp. $M_{0}\rightarrow 0^{+}$), whether the lower bound can be improved to be of order $|\Gamma_{1}|^{-\alpha}$ (resp. $M_{0}^{-\alpha}$) for some $\alpha>0$? In \cite{YZ18}, it gives an affirmative answer to this question {\bf under the convexity assumption of the domain $\O$}. However, in many situations, the domain $\O$ may not be convex. For example, as the motivation of the problem (\ref{Prob}) illustrated in \cite{YZ16}, the Space Shuttle Columbia (see Figure \ref{Fig, Columbia}) is not convex. So the main goal of this paper is to remove the convexity assumption, and we will apply a new approach which takes advantage of the Neumann heat kernel.

Previously, the methods used in \cite{YZ16} and \cite{YZ18} based on the representation formula (see Corollary 3.9 in \cite{YZ16}) of $u$ in terms of the heat kernel $\Phi$ of $\m{R}^{n}$. More precisely, for any boundary point $x\in\p\O$ and $t\in[0,T^{*})$,
\be\begin{split}
u(x,t) = &\,\, 2\int_{\O}\Phi(x-y,t)\,u_{0}(y)\,dy-2\int_{0}^{t}\int_{\p\O}\frac{\p\Phi(x-y,t-\tau)}{\p n(y)}\,u(y,\tau)\,dS(y)\,d\tau \\
&+2\int_{0}^{t}\int_{\Gamma_1}\Phi(x-y,t-\tau)\,u^{q}(y,\tau)\,dS(y)\,d\tau, \label{rep formula by heat kernel}
\end{split}\ee
where 
\[\frac{\p\Phi(x-y,t-\tau)}{\p n(y)}=-(D\Phi)(x-y,t-\tau)\cdot \ora{n}(y).\]
We want to remark that there also exists a representation formula for the inside point $x\in\O$ and $t\in[0,T^{*})$, see Theorem 3.8 in \cite{YZ16}. That is, 
\be\begin{split}
u(x,t) = &\,\, \int_{\O}\Phi(x-y,t)\,u_{0}(y)\,dy-\int_{0}^{t}\int_{\p\O}\frac{\p\Phi(x-y,t-\tau)}{\p n(y)}\,u(y,\tau)\,dS(y)\,d\tau \\
&+\int_{0}^{t}\int_{\Gamma_1}\Phi(x-y,t-\tau)\,u^{q}(y,\tau)\,dS(y)\,d\tau. \label{rep formula by heat kernel, inside}
\end{split}\ee
The formula (\ref{rep formula by heat kernel, inside}) is different from (\ref{rep formula by heat kernel}) in that the coefficients 2's do not appear in front of the integrals on the right hand side. The existence of the coefficients 2's in (\ref{rep formula by heat kernel}) is due to the jump relation of the single-layer heat potential when $x\in\p\O$ (see e.g. Corollary Appendix A.2 in \cite{YZ16} or Theorem 9.5, Sec. 2, Chap. 9 in \cite{Kre14}). The drawback of the formula (\ref{rep formula by heat kernel}) is the uncertainty of the sign of the term $\frac{\p\Phi(x-y,t-\tau)}{\p n(y)}$. It is this reason that demands the convexity of $\O$ in \cite{YZ18} to improve the lower bound of $T^{*}$ to a power order of $|\Gamma_{1}|^{-1}$ (resp. $M_{0}^{-1}$) as $|\Gamma_{1}|\rightarrow 0^{+}$ (resp. $M_{0}\rightarrow 0^{+}$). 

In order to avoid the integral containing $\frac{\p\Phi(x-y,t-\tau)}{\p n(y)}$, it motivates us to consider the Green's function $G(x,t,y,s)$ of the heat operator in $\O$ with the Neumann boundary condition whose normal derivative vanishes (see (\ref{Green fn solves heat and bdry}) in Lemma \ref{Lemma, prop of Green fn}). As a convention, $G(x,t,y,s)$ is also called the Neumann Green's function.  In addition, since the coefficients of the heat operator are constants, the Neumann Green's function is invariant under time translation (see part(f) in Lemma \ref{Lemma, prop of Green fn}). So it is more convenient to consider the corresponding Neumann heat kernel $N(x,y,t)$ of $\O$ (see Definition \ref{Def, NHK}). Unlike the heat kernel $\Phi$ of $\m{R}^{n}$, the Neumann heat kernel $N(x,y,t)$ of $\O$ does not have an explicit formula in general which makes it difficult to quantify. Fortunately, for small time $t$, $N(x,y,t)$ can be bounded in terms of $\Phi$ (see Lemma \ref{Lemma, quant of NHK}). This property will help us to justify the representation formula (\ref{rep for soln, initial}) via $N(x,y,t)$ and further analyze it. In fact, if the solution $u$ to (\ref{Prob}) is smooth, then it is straightforward to obtain (\ref{rep for soln, initial}) based on the properties of $N(x,y,t)$ in Corollary \ref{Cor, prop of NHK}. Now although $u$ is not smooth near the boundary $\p\O$ or near the initial time $t=0$, by taking advantage of Lemma \ref{Lemma, quant of NHK}, we are able to verify (\ref{rep for soln, initial}) in a way similar to the proof for (\ref{rep formula by heat kernel, inside}) in \cite{YZ16}. In contrast to (\ref{rep formula by heat kernel, inside}), the formula (\ref{rep for soln, initial}) does not contain any term that may cause the jump relation along the boundary $\p\O$. As a result, the formula (\ref{rep for soln, initial}) holds for both $x\in\O$ and $x\in\p\O$. In addition, due to Lemma \ref{Lemma, quant of NHK} again, for small time $t$, the estimate on the term $\int_{0}^{t}\int_{\Gamma_{1}}N(x,y,t-\tau)u^{q}(y,\tau)\,dS(y)\,d\tau$ boils down to the estimate on $\int_{0}^{t}\int_{\Gamma_{1}}\Phi\big(x-y,2(t-\tau)\big)u^{q}(y,\tau)\,dS(y)\,d\tau$ which has been treated in \cite{YZ18}. Finally, noticing that the method in \cite{YZ18} analyzes the representation formula discretely and in each step the time is indeed small, so we can combine that method with (\ref{rep for soln, initial}) and Lemma \ref{Lemma, quant of NHK} to achieve our goal. The following are the main results of this paper.

\begin{theorem}\label{Thm, lower bdd, general case}
Assume (\ref{assumption on prob}) and let $T^{*}$ be the lifespan for (\ref{Prob}). Then there exists a constant $C=C(n,\O)$ such that 
\be\label{lower bdd, general case}
T^{*}\geq \frac{C}{q-1}\ln\Big(1+(2M_{0})^{-4(q-1)}\,|\Gamma_{1}|^{-\frac{2}{n-1}}\Big).\ee
\end{theorem}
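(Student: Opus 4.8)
The plan is to mirror the discrete iteration argument of \cite{YZ18} but replace the representation formula (\ref{rep formula by heat kernel}) in terms of the free heat kernel $\Phi$ by the Neumann heat kernel formula (\ref{rep for soln, initial}), thereby eliminating the troublesome term $\p\Phi/\p n(y)$ whose sign is uncontrolled on a nonconvex domain. The starting point is the identity, valid for all $x\in\ol{\O}$ and $t\in[0,T^{*})$,
\[u(x,t)=\int_{\O}N(x,y,t)\,u_{0}(y)\,dy+\int_{0}^{t}\int_{\Gamma_{1}}N(x,y,t-\tau)\,u^{q}(y,\tau)\,dS(y)\,d\tau,\]
together with the two-sided comparison of Lemma \ref{Lemma, quant of NHK}, which for small $t$ bounds $N(x,y,t)$ above and below by constant multiples of $\Phi\big(x-y,c_{\pm}(t)\big)$ with $c_{\pm}$ comparable to $t$. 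Since $N\geq 0$ and $\int_{\O}N(x,y,t)\,dy=1$ (from Corollary \ref{Cor, prop of NHK}), the first term is at most $M_{0}$, so taking the supremum over $\ol{\O}\times[0,t]$ gives the key nonlinear integral inequality
\[M(t)\leq M_{0}+M(t)^{q}\sup_{x\in\ol{\O}}\int_{0}^{t}\int_{\Gamma_{1}}N(x,y,t-\tau)\,dS(y)\,d\tau.\]

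The heart of the matter is then to estimate the kernel integral $\mathcal{I}(t):=\sup_{x}\int_{0}^{t}\int_{\Gamma_{1}}N(x,y,t-\tau)\,dS(y)\,d\tau$. By Lemma \ref{Lemma, quant of NHK} this reduces, up to a constant $C(n,\O)$, to $\int_{0}^{t}\int_{\Gamma_{1}}\Phi\big(x-y,2(t-\tau)\big)\,dS(y)\,d\tau$, exactly the quantity already analyzed in \cite{YZ18}. The $C^{2}$ regularity of $\p\O$ lets one flatten the boundary locally and compare the surface measure on $\Gamma_{1}$ with Lebesgue measure on a piece of the hyperplane $\m{R}^{n-1}$; carrying out the Gaussian integral over that flat piece and then the $\tau$-integral yields a bound of the form $\mathcal{I}(t)\le C\,t\cdot g\big(t/|\Gamma_{1}|^{2/(n-1)}\big)$, where the scaling reflects that $\Gamma_{1}$ has diameter of order $|\Gamma_{1}|^{1/(n-1)}$; for $t$ small relative to this length scale the full strength $\mathcal I(t)\le C\,|\Gamma_1|^{1/(n-1)}\sqrt t$ holds, while for larger $t$ one only loses a logarithm when $n=2$ (which is why the $n=2$ result is stated separately elsewhere, but does not affect Theorem \ref{Thm, lower bdd, general case} as stated). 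The point is that on a short time interval $[0,t_{1}]$ with $t_{1}\asymp |\Gamma_{1}|^{2/(n-1)}\,(2M_{0})^{-2(q-1)}$ one gets $\mathcal I(t_1)\cdot(2M_0)^{q-1}\le \tfrac12$ or so, which feeds the bootstrap.

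With this in hand the iteration runs as in \cite{YZ18}: on the first short interval $[0,t_{1}]$ the inequality $M(t)\le M_{0}+\mathcal I(t)M(t)^{q}$ together with a continuity/connectedness argument forces $M(t_{1})\le 2M_{0}$; then one restarts the representation formula from time $t_{1}$ (legitimate because $N$ is time-translation invariant, part (f) of Lemma \ref{Lemma, prop of Green fn}, and because $u(\cdot,t_{1})\in C(\ol\O)$ can serve as new initial data with new maximum $\le 2M_0$), obtaining an interval of comparable length on which $M$ at most doubles again. After $k$ steps $M\le 2^{k}M_{0}$ and the elapsed time is at least a constant times $\sum_{j<k}|\Gamma_{1}|^{2/(n-1)}(2^{j}M_{0})^{-2(q-1)}$; summing the geometric series in $j$ and then choosing $k\to\infty$ (or optimizing over $k$) produces, after taking logarithms, the bound $T^{*}\ge \frac{C}{q-1}\ln\big(1+(2M_{0})^{-4(q-1)}|\Gamma_{1}|^{-2/(n-1)}\big)$. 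The exponent $4(q-1)$ rather than $2(q-1)$ comes from the fact that each doubling of $M$ shrinks the admissible step length by $(2M_0)^{-2(q-1)}$ \emph{and} the constant absorbed in Lemma \ref{Lemma, quant of NHK} forces a further halving of the time scale, so the summable lengths decay like $4^{-j(q-1)}$ in effect; this matches precisely the earlier estimate (\ref{lower bdd, old}), now without convexity.

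I expect the main obstacle to be the rigorous justification of the Neumann heat kernel representation formula (\ref{rep for soln, initial}) up to and including the boundary $\p\O$, since $u$ is only known to be $C^{2,1}$ in the interior and merely continuous on $\ol\O\times[0,T^{*})$, with a singularity of its normal derivative at the interface $\wt\Gamma$. The argument must approximate $u$ by its values on slightly shrunken domains or at slightly later times, apply the smooth version of the formula there (which follows directly from Corollary \ref{Cor, prop of NHK}), and pass to the limit using the bound $N(x,y,t)\le C\,\Phi(x-y,2t)$ from Lemma \ref{Lemma, quant of NHK} together with the integrability of $\Phi(x-y,t-\tau)$ over $\Gamma_1\times(0,t)$ when $x\in\p\O$ — this is the same integrability that made the $\Phi$-based formula (\ref{rep formula by heat kernel}) work in \cite{YZ16}, so the needed estimates are available; the new feature is only the absence of the jump term, which is a genuine simplification rather than a new difficulty. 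A secondary technical point is checking that the constants in Lemma \ref{Lemma, quant of NHK} are uniform enough (depending only on $n$ and $\O$) that restarting the iteration at each $t_j$ does not degrade them, but this is exactly the content of the time-translation invariance combined with the fact that the small-time kernel comparison holds on a fixed time interval determined by $\O$.
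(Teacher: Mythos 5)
Your setup is the same as the paper's: the Neumann-kernel representation formula plus $\int_{\O}N(x,y,t)\,dy=1$ and $0\le N(x,y,t)\le C\Phi(x-y,2t)$ for $t\le 1$ yields $M(T+t)\le M(T)+M^{q}(T+t)\,\mathcal I(t)$ with $\mathcal I(t)=\sup_{x}\int_{0}^{t}\int_{\Gamma_1}N(x,y,\tau)\,dS(y)\,d\tau$, and one then iterates over doubling levels of $M$. However, the quantitative kernel estimate you feed into this is false: you claim $\mathcal I(t)\le C\,|\Gamma_1|^{1/(n-1)}\sqrt t$ for $t$ small relative to $|\Gamma_1|^{2/(n-1)}$. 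The paper's own sharpness analysis contradicts this: for flat ball-shaped $\Gamma_1$ of radius $\rho$ and $x\in\ol\Gamma_1$, Lemma \ref{Lemma, radial bdry-time int, large dim} and Proposition \ref{Prop, lower bdd for bdry-time int} give $\int_0^t\int_{\Gamma_1}\Phi(x-y,\tau)\,dS\,d\tau\asymp\min\{\sqrt t,\rho\}$ for $n\ge 3$, which is $\asymp\sqrt t$ when $t\le\rho^{2}$ and hence much larger than $\rho\sqrt t$ when $\rho$ is small. The correct available bounds are Lemma \ref{Lemma, bdry-time int, general}, namely $\mathcal I(t)\le C|\Gamma_1|^{\alpha}t^{[1-(n-1)\alpha]/2}$ only for $\alpha<\tfrac1{n-1}$, and Lemma \ref{Lemma, bdry-time int, critical}, which gives $|\Gamma_1|^{1/(n-1)}$ with no gain in $t$; the product bound you want sits exactly at the forbidden endpoint. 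The paper obtains the exponents $4(q-1)$ and $\tfrac2{n-1}$ by choosing $\alpha=\tfrac1{2(n-1)}$ (so $\mathcal I(t)\le C|\Gamma_1|^{1/(2(n-1))}t^{1/4}$, Lemma \ref{Lemma, growth rate, general}) and solving for $t_k$; your explanation that the $4(q-1)$ comes from the constant in Lemma \ref{Lemma, quant of NHK} "forcing a further halving of the time scale" is not the actual mechanism and cannot be made precise.

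There is a second gap in how you pass from step lengths to the logarithm. Summing the geometric series $\sum_j|\Gamma_1|^{2/(n-1)}(2^jM_0)^{-2(q-1)}$ gives a finite quantity of order $\tfrac{1}{q-1}|\Gamma_1|^{2/(n-1)}M_0^{-2(q-1)}$, and "taking logarithms" of that is not a legitimate way to lower bound $T^{*}$; it would in any case produce a power-type bound, not $\ln\big(1+(2M_0)^{-4(q-1)}|\Gamma_1|^{-2/(n-1)}\big)$. The paper's mechanism is different: since the growth lemma only applies for $t<1$, each doubling step satisfies $t_k\ge\min\{1,\,C(2^kM_0)^{-4(q-1)}|\Gamma_1|^{-2/(n-1)}\}$, and the logarithm arises from counting the steps for which the cap $1$ is active, formalized in Lemma \ref{Lemma, lower bdd for a series} as $\sum_k\min\{1,\lambda^kA\}\ge\ln(1+\lambda A)/(2\ln\lambda^{-1})$. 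Without both the interpolated estimate at $\alpha=\tfrac1{2(n-1)}$ and this capped-sum counting argument, your scheme does not reach the stated bound. (A minor point: Lemma \ref{Lemma, quant of NHK} gives only the upper Gaussian bound and nonnegativity, not a two-sided comparison, but the lower bound is never needed.)
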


This theorem is not new and it has appeared in Theorem 1.1 of \cite{YZ18}. But its proof in this paper, as mentioned above, is different and based on the representation formula (\ref{rep for soln, initial}) involving the Neumann heat kernel $N(x,y,t)$. 

\begin{theorem}\label{Thm, lower bdd}
Assume (\ref{assumption on prob}) and let $T^{*}$ be the lifespan for (\ref{Prob}). Denote $M_{0}$ as in (\ref{initial max}) and define 
\[ Y=\left\{\begin{array}{lll}
M_{0}^{q-1}|\Gamma_{1}|^{\frac{1}{n-1}}, & \text{if}\quad n\geq 3, \vspace{0.1in}\\
M_{0}^{q-1}|\Gamma_{1}|\ln\Big(1+\dfrac{1}{|\Gamma_{1}|}\Big), & \text{if}\quad n=2.
\end{array}\right.\]
Then there exist constants $Y_{0}=Y_{0}(n,\O)$ and $C=C(n,\O)$ such that if $Y\leq \frac{Y_{0}}{q}$, then
\be\label{lower bdd}
T^{*}\geq \frac{C}{(q-1)Y}.\ee
\end{theorem}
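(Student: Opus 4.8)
The plan is to run a discrete iteration on the representation formula (\ref{rep for soln, initial}), in the spirit of the argument in \cite{YZ18}, but using the Neumann heat kernel $N(x,y,t)$ so that the boundary term with the uncertain sign never appears. The starting point is the observation that, since $N(x,y,t)\geq 0$ and the Neumann boundary term is absent, evaluating (\ref{rep for soln, initial}) at any $(x,t)\in\ol\O\times[0,T^{*})$ and using $\int_{\O}N(x,y,t-\tau)\,u_{0}(y)\,dy\leq M_{0}$ together with $u^{q}\leq M(\tau)^{q}$ yields
\be\label{prop-basic-ineq}
M(t)\leq M_{0}+M(t)^{q}\sup_{(x,t)\in\ol\O\times[0,T]}\int_{0}^{t}\int_{\Gamma_{1}}N(x,y,t-\tau)\,dS(y)\,d\tau .
\ee
By Lemma \ref{Lemma, quant of NHK}, for small time the kernel $N(x,y,t-\tau)$ is dominated by a constant multiple of $\Phi\big(x-y,2(t-\tau)\big)$, so the double integral is controlled by the quantity $\sup_{x}\int_{0}^{t}\int_{\Gamma_{1}}\Phi\big(x-y,2(t-\tau)\big)\,dS(y)\,d\tau$, which is exactly the object estimated in \cite{YZ18}: it is bounded by $C\,t\,|\Gamma_{1}|^{1/(n-1)}$ when $n\geq 3$ and by $C\,t\,|\Gamma_{1}|\ln(1+1/|\Gamma_{1}|)$ when $n=2$ — i.e.\ by $C\,t\cdot Y/M_{0}^{q-1}$ in the notation of the theorem, provided $t$ stays below a threshold $t_{0}=t_{0}(n,\O)$ so that the small-time bound of Lemma \ref{Lemma, quant of NHK} applies.

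Next I would set up the iteration. Fix a time step $h=\min\{t_{0},\ \text{(something)}\}$ and, for $t\leq h$, insert the bound above into (\ref{prop-basic-ineq}) to get $M(t)\leq M_{0}+C\,h\,Y\,M_{0}^{-(q-1)}M(t)^{q}$. A standard continuity/bootstrap argument shows that if the coefficient $C\,h\,Y\,M_{0}^{-(q-1)}(2M_{0})^{q-1}=2^{q-1}C\,h\,Y$ is small enough — which is where the hypothesis $Y\leq Y_{0}/q$ enters, since $2^{q-1}\leq e^{q}$ is absorbed by choosing $Y_{0}$ small relative to $1/q$ after taking logarithms, or more cleanly by working with $(2M_{0})^{q-1}$ as in Theorem \ref{Thm, lower bdd, general case} — then $M(h)\leq 2M_{0}$. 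One then restarts the problem at time $h$ with new initial maximum $M(h)\leq 2M_{0}$; the same step gives $M(2h)\leq 2M(h)\leq 4M_{0}$, and after $k$ steps $M(kh)\leq 2^{k}M_{0}$, as long as at each stage the smallness condition with the current maximum $2^{j}M_{0}$ in place of $M_{0}$ still holds. But the relevant smallness parameter after $j$ steps is $2^{q-1}C\,h\,(2^{j}M_{0})^{q-1}|\Gamma_{1}|^{1/(n-1)}=2^{(j+1)(q-1)}C\,h\,Y$ (for $n\geq 3$; similarly for $n=2$), which grows geometrically in $j$. Hence the iteration can only be continued while $2^{(j+1)(q-1)}C\,h\,Y\lesssim 1$, i.e.\ for roughly $j\lesssim \dfrac{1}{q-1}\ln\!\big(\tfrac{1}{C\,h\,Y}\big)$ steps; choosing $h$ to be a fixed constant depending only on $(n,\O)$, the survival time is at least $kh\geq \dfrac{C'}{q-1}\ln\!\big(\tfrac{1}{Y}\big)$. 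Since the hypothesis is $Y\leq Y_{0}/q$, one has $\ln(1/Y)\geq \ln(q/Y_{0})\gtrsim 1$, and in fact $\ln(1/Y)\gtrsim 1/Y$ is \emph{false} in general — so here I would instead not iterate by doubling but rather run the single-step estimate with the \emph{optimal} choice of $h$: from $M(t)\leq M_{0}+C\,Y\,M_{0}^{-(q-1)}\,t\,M(t)^{q}$ valid on $[0,\min\{t_{0},T^{*}\})$, the bootstrap gives $M(t)\leq 2M_{0}$ for all $t\leq \min\{t_{0},\, c/(2^{q}C\,Y)\}$, and since $Y\leq Y_{0}/q$ forces $1/(2^{q}C\,Y)\geq q/(2^{q}C\,Y_{0})\geq t_{0}$ once $Y_{0}$ is small, we conclude $M$ stays finite up to time $c/(2^{q}CY)\geq C/((q-1)Y)$ after absorbing $2^{q}$ into the logarithmic slack, giving $T^{*}\geq C/((q-1)Y)$.

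The main obstacle I anticipate is not the iteration bookkeeping but the rigorous justification of (\ref{rep for soln, initial}) at boundary points and the precise small-time comparison $N(x,y,t)\lesssim \Phi(x-y,2t)$ near the interface $\wt\Gamma$, where $\partial\O$ is only piecewise smooth; the excerpt tells us Lemma \ref{Lemma, quant of NHK} handles exactly this, so the real work is to reduce the $\Gamma_{1}$-surface integral of $\Phi(x-y,2(t-\tau))$ to the sharp geometric bounds $|\Gamma_{1}|^{1/(n-1)}$ and $|\Gamma_{1}|\ln(1+1/|\Gamma_{1}|)$. That reduction — covering $\Gamma_{1}$ by a boundary patch of diameter $\sim|\Gamma_{1}|^{1/(n-1)}$, integrating the Gaussian in the tangential variables, and carrying out the resulting $\tau$-integral, with the logarithm in $n=2$ coming from $\int \Phi(\cdot,2(t-\tau))\,d\tau$ on a one-dimensional boundary — is the technical heart, and it is essentially imported from \cite{YZ18}; the new content is simply that thanks to the Neumann heat kernel this estimate is all that is needed, with no convexity and no sign condition on a normal-derivative term.
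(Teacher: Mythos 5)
There is a genuine gap, and it is quantitative rather than structural. Your scheme controls $M$ only up to a doubling threshold $2M_{0}$ (or doubles at each stage), so every application of the representation formula pays the factor $M^{q}\leq (2M_{0})^{q}$, i.e.\ a loss of $2^{q}$; your own computation lands at $T^{*}\gtrsim 1/(2^{q}CY)$. Under the stated hypothesis $Y\leq Y_{0}/q$ this cannot be converted into the claimed bound $T^{*}\geq C/\big((q-1)Y\big)$ with $C=C(n,\O)$: that would require $C\leq (q-1)2^{-q}$, which fails for large $q$ no matter how $Y_{0}$ and $C$ are chosen, since they may not depend on $q$. The remark that ``$2^{q-1}\leq e^{q}$ is absorbed by choosing $Y_{0}$ small relative to $1/q$'' is incorrect ($Y\leq Y_{0}/q$ only gives $2^{q}Y\leq 2^{q}Y_{0}/q$, which is large for large $q$), and there is no ``logarithmic slack'' to absorb $2^{q}$ because the target is a power bound in $Y$. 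The paper's proof is built precisely to avoid this loss: it constructs a sequence $M_{k}$ with $M_{k-1}<M_{k}\leq \frac{q}{q-1}M_{k-1}$ defined implicitly by $(M_{k}-M_{k-1})/M_{k}^{q}=\delta_{1}:=2C^{*}|\Gamma_{1}|^{1/(n-1)}$ (Lemma \ref{Lemma, criteria for step continue}), so the per-step ratio satisfies $(M_{k}/M_{k-1})^{q-1}\leq e$; the unit-time growth bound of Lemma \ref{Lemma, growth rate, critical} then forces each crossing from $M_{k-1}$ to $M_{k}$ to take time at least $1$, and the recursion $x_{k-1}=x_{k}(1-x_{k})^{q-1}$ for $x_{k}=M_{k}^{q-1}\delta_{1}$ shows the number of such steps before $x_{k}$ reaches size $\sim 1/q$ is at least $c/\big((q-1)Y\big)$. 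Counting $\sim 1/\big((q-1)Y\big)$ unit-time steps with bounded per-step ratio is the essential idea missing from your proposal; without it one gets either the exponentially weaker $2^{-q}/Y$ bound or, as in your first (doubling) branch, only the logarithmic bound of Theorem \ref{Thm, lower bdd, general case}.

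Two further slips, secondary but worth noting. First, the bound you invoke, $\int_{0}^{t}\int_{\Gamma_{1}}\Phi\big(x-y,2(t-\tau)\big)\,dS(y)\,d\tau\leq C\,t\,|\Gamma_{1}|^{1/(n-1)}$ (linear in $t$), is not what Lemma \ref{Lemma, bdry-time int, critical} provides and is false for small $t$: for $x\in\Gamma_{1}$ the integral behaves like $\sqrt{t}$ when $t\lesssim|\Gamma_{1}|^{2/(n-1)}$ (cf.\ Lemmas \ref{Lemma, radial bdry-time int, large dim} and \ref{Lemma, radial bdry-time int, dim 2}), which exceeds $t|\Gamma_{1}|^{1/(n-1)}$; the correct statement is the $t$-independent bound $C|\Gamma_{1}|^{1/(n-1)}$ for $t\leq 2$. (A bound of the form $C(1+t)|\Gamma_{1}|^{1/(n-1)}$ for all $t$ could be pieced together using $N\leq C$ for $t\geq 1$, but it still only yields the $2^{-q}$-type estimate above.) Second, your final step is internally inconsistent about the time restriction: the inequality you bootstrap is justified only for $t<t_{0}$, where the kernel comparison Lemma \ref{Lemma, quant of NHK} applies, so it cannot by itself control $M$ up to the much larger time $c/(2^{q}CY)$; one must either prove a large-time kernel estimate or iterate over unit time intervals — and once you iterate, you are back to needing the paper's increment scheme to avoid the exponential loss in $q$.
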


The lower bound (\ref{lower bdd}) was also obtained in \cite{YZ18} {\bf under the convexity assumption of $\O$}, so the significance of Theorem \ref{Thm, lower bdd} is the {\bf removal of the convexity requirement}. The method in the proof again relies on the representation formula (\ref{rep for soln, initial}). 

\begin{remark}\label{Remark, analysis of asym behavior}
From Theorem \ref{Thm, lower bdd}, we draw two conclusions.
\begin{enumerate}[(1)]
\item Relation between $T^{*}$ and $M_{0}$: if $M_{0}\rightarrow 0^{+}$ and other factors are fixed, then (\ref{lower bdd}) implies 
$$T^{*}\geq C_{1}\,M_{0}^{-(q-1)}.$$
This order is optimal since if the initial data $u_{0}$ is a constant function, then it follows from (\ref{upper bdd}) that 
\[T^{*}\leq C_{2}\,M_{0}^{-(q-1)}.\]

\item  Relation between $T^{*}$ and $|\Gamma_{1}|$: if $|\Gamma_{1}|\rightarrow 0^{+}$ and other factors are fixed,
then it follows from (\ref{upper bdd}) that $T^{*}$ is at most of order $|\Gamma_{1}|^{-1}$. On the other hand, (\ref{lower bdd}) implies that $T^{*}$ is at least of order $|\Gamma_{1}|^{-\frac{1}{n-1}}$ for $n\geq 3$ and $|\Gamma_{1}|^{-1}\big/\ln\big(|\Gamma_{1}|^{-1}\big)$ for $n=2$. In particular when $n=2$, the order of the lower bound is almost optimal (within a logarithmic order to the upper bound).
\end{enumerate}
\end{remark}

\subsection{Historical Works}
\subsubsection{Blow-up phenomenon for the heat equation with nonlinear Neumann conditions}
Since the pioneering papers by Kaplan \cite{Kap63} and Fujita\cite{Fuj66}, the blow-up phenomenon of parabolic type has been extensive studied in the literature for the Cauchy problem as well as the boundary value problems. We refer the readers to the surveys \cite{DL00, Lev90}, the books \cite{Hu11, QS07} and the references therein. 

One of the typical problems is the heat equation with Neumann boundary conditions in a bounded domain $\O$:
\be\label{heat with Neumann}
\left\{\begin{array}{lll}
(\p_{t}-\Delta_{x})u(x,t)=0 &\text{in}& \Omega\times (0,T], \vspace{0.02in}\\
\dfrac{\partial u(x,t)}{\partial n(x)}=F\big(u(x,t)\big) &\text{on}& \p\O\times (0,T], \vspace{0.02in}\\
u(x,0)=\psi(x) &\text{in}& \Omega.
\end{array}\right.\ee
Here, the initial data $\psi$ is not assumed to be nonnegative. It is well-known that there are two ways to construct the classical solution to (\ref{heat with Neumann}) depending on the smoothness of $\p\O$, $F$ and $u_{0}$ (see Theorem 1.1 and 1.3 in \cite{L-GMW91}, also see the books \cite{Fri64, Lie96, LSU68}).
\begin{itemize}
\item[(a)] The first way is by Schauder estimate. Assume $\p\O$ is $C^{2+\alpha}$, $F\in C^{1+\alpha}(\m{R})$, $\psi\in C^{2+\alpha}(\ol{\O})$ and the compatibility condition
\[\frac{\p \psi(x)}{\p n(x)}=F\big(\psi(x)\big), \quad \forall\, x\in \p\O.\]

Then there exist $T>0$ and a unique function $u$ in $C^{2+\alpha,1+\frac{\alpha}{2}}\big(\ol{\O}\times[0,T]\big)$ which satisfies (\ref{heat with Neumann}) pointwisely.

\item[(b)] The second way is by the heat potential technique. The requirements on the data can be relaxed and in particular, the compatibility condition is no longer needed, but accordingly the conclusion is also weaker. More precisely, assume $\p\O$ is $C^{1+\alpha}$, $F\in C^{1}(\m{R})$ and $\psi\in C^{1}(\ol{\O})$. Then there exist $T>0$ and a unique function $u$ in $C^{2,1}\big(\O\times(0,T]\big)\bigcap C\big(\overline{\O}\times[0,T]\big)$ which satisfies (\ref{heat with Neumann}) pointwisely.
\end{itemize}
In most papers, the assumptions will fall into or similar to either case (a) or case (b). In the following statements, we will ignore their distinctions and just refer them to be the local (classical) solutions. 

It has been already known that if $F$ is bounded on $\m{R}$, then the local solutions can be extended globally. But if $F$ is unbounded, then the finite-time blowup may occur. The first result on the blow-up phenomenon for (\ref{heat with Neumann}) is due to Levine and Payne \cite{LP74}. They used a concavity argument to conclude that any classical solution blows up in finite time under the two assumptions below.
\begin{itemize}
\item First, 
\be\label{LP radiation fn}
F(z)=|z|^{q}h(z),\ee
for any constant $q>1$ and any differentiable, non-decreasing function $h(z)$.

\item Secondly,  
\be\label{LP cond}
\frac{1}{|\p\O|}\,\int_{\p\O}\bigg(\int_{0}^{\psi(x)}F(z)\,dz\bigg)\,dS(x)>\frac{1}{2}\int_{\O}|D\psi(x)|^{2}\,dx. \ee
\end{itemize}
\begin{remark}\label{Remark, cor of LP74} As a corollary of the result in \cite{LP74}, if $h(z)$ in (\ref{LP radiation fn}) is also positive and $\psi$ is a positive constant function, then (\ref{LP cond}) is satisfied and therefore the solution blows up in finite time. Combining this fact with the comparison principle, it implies that for any positive $h(z)$ in (\ref{LP radiation fn}) and for any positive $\psi$, the solution blows up in finite time. \end{remark} 
Later, Walter \cite{Wal75} gave a more complete characterization for the blow-up phenomenon by introducing some comparison functions. More precisely, if $F(z)$ is positive, increasing and convex for $z\geq z_{0}$ with some constant $z_{0}$, then there are exactly two possibilities.
\begin{itemize}
\item First, if $\int_{z_{0}}^{\infty}\frac{1}{F(z)F'(z)}\,dz=\infty$, then the solution exists globally for any initial data $\psi$.

\item Secondly, if $\int_{z_{0}}^{\infty}\frac{1}{F(z)F'(z)}\,dz<\infty$, then the solution blows up in finite time for large initial data $\psi$.
\end{itemize}
The result was further generalized by Rial and Rossi \cite{RR97} (also see \cite{L-GMW91}). In \cite{RR97}, by assuming $F$ to be $C^{2}$, increasing and positive in $\m{R}_{+}$, and also assuming $1/F$ to be locally integrable near $\infty$ (that is $\int^{\infty}\frac{1}{F(z)}\,dz<\infty$), it is shown that for any positive initial data $\psi$, the classical solution blows up in finite time. The success of their method was due to a clever choice of an energy function which made the proof short and elementary.

Applying these earlier results to the simpler model (that is (\ref{Prob}) with $\Gamma_{2}=\emptyset$)
\be\label{heat with Neumann, power}
\left\{\begin{array}{lll}
(\p_{t}-\Delta_{x})u(x,t)=0 &\text{in}& \Omega\times (0,T], \vspace{0.02in}\\
\dfrac{\partial u(x,t)}{\partial n(x)}=u^{q}(x,t) &\text{on}& \p\O\times (0,T], \vspace{0.02in}\\
u(x,0)=u_{0}(x) &\text{in}& \Omega,
\end{array}\right.\ee
where $q>1$ and the initial data $u_{0}\geq 0$ and $u_{0}\not\equiv 0$, it can be shown that any solution to (\ref{heat with Neumann, power}) blows up in finite time. In fact, by the maximum principle, the solution $u$ becomes positive as soon as $t>0$. Then either Remark \ref{Remark, cor of LP74} or the result in \cite{RR97} (also see \cite{HY94}) implies the finite time blowup of the solution. However, when the nonlinear radiation condition is only imposed on partial boundary (that is when $\Gamma_{2}\neq\emptyset$ in (\ref{Prob})), additional difficulties appear due to the discontinuity of the normal derivative along the interface $\wt{\Gamma}$ between $\Gamma_{1}$ and $\Gamma_{2}$. To our knowledge, \cite{YZ16} and \cite{YZ18} were the first papers that dealt with this problem and quantified both upper and lower bounds of the lifespan (or equivalently the blow-up time).

\subsubsection{Lower bound estimate for the lifespan}
When considering the bounds of the lifespan, the upper bound is usually related to the nonexistence of the global solutions and various methods on this issue have been developed (see \cite{Lev75} for a list of six methods). The lower bound was not studied as much in the past and not many methods have been explored. However, the lower bound may be more useful in practice since it serves as the safe time. In the existing literature, the most common ideas are the comparison argument and the differential inequality techniques. 

The first work on the lower bound estimate of the lifespan was due to Kaplan \cite{Kap63}. Later, Payne and Schaefer  developed a very robust method on this issue. For example, they derived the lower bound of the lifespan for the nonlinear heat equation with homogeneous Dirichlet or Neumann boundary conditions in \cite{PS06, PS07}. Later this idea was also applied to the problem (\ref{heat with Neumann}) (see \cite{PS09}) and many other types of problems (see e.g. \cite{Ena11, PPV-P10, PP13, BS14, LL12, TV-P16, AD17, DS16}). However, this method requires the domain to be convex. In addition, it is not effective to deal with the partial nonlinear boundary conditions like the one in (\ref{Prob}).

Recently, in order to obtain the lower bound of the lifespan for the problem (\ref{Prob}), the authors of this paper developed a new method in \cite{YZ18} by discretely analyzing the representation formula of the solution in terms of $\Phi$. Firstly, without the convexity assumption, \cite{YZ18} obtained a lower bound for $T^{*}$ which was logarithmic order of $|\Gamma_{1}|^{-1}$ (resp. $M_{0}^{-1}$) as $|\Gamma_{1}|\rightarrow 0^{+}$ (resp. $M_{0}\rightarrow 0^{+}$). On the other hand, {\bf by assuming $\O$ to be convex}, it improved the lower bound to be of power order of $|\Gamma_{1}|^{-1}$ (resp. $M_{0}^{-1}$) when $|\Gamma_{1}|\rightarrow 0^{+}$ (resp. $M_{0}\rightarrow 0^{+}$) as in Theorem \ref{Thm, lower bdd}. 

In the current paper, {\bf without the convexity assumption on $\O$}, by combining the method in \cite{YZ18} with the new representation formula in terms of the Neumann heat kernel $N(x,y,t)$ (see (\ref{rep for soln, initial})), we are able to show that $T^{*}$ is at least the same power order of $|\Gamma_{1}|^{-1}$ (resp. $M_{0}^{-1}$) when $|\Gamma_{1}|\rightarrow 0^{+}$ (resp. $M_{0}\rightarrow 0^{+}$) as in \cite{YZ18}.

\subsection{Organization}
The organization of this paper is as follows. Section \ref{Sec, pre} will introduce the definitions and the basic properties of the Neumann Green's function and the Neumann heat kernel of $\O$. In addition, it will discuss the representation formula of the solution and provide two crucial estimates on the boundary-time integrals of the heat kernel $\Phi$ of $\m{R}^{n}$. Section \ref{Sec, proof for general lower bdd} and Section \ref{Sec, proof for main thm} will prove Theorem \ref{Thm, lower bdd, general case} and Theorem \ref{Thm, lower bdd} respectively. Section \ref{Sec, sharp} will demonstrate the sharpness of Lemma \ref{Lemma, bdry-time int, critical} which plays an essential role in Section \ref{Sec, proof for main thm}. Finally in the Appendix \ref{Sec, proof for rep formula}, a rigorous proof will be given to the representation formula mentioned in Section \ref{Sec, pre} which is the key tool in this paper.

\section{Preliminaries}\label{Sec, pre}
\subsection{Neumann Green's Function and Neumann Heat Kernel}
\label{Subsec, NGF and NHK}
Given a bounded domain $\O$ in $\m{R}^{n}$ and a parabolic operator $L$ on $\O$, similar to the elliptic case, one can define the fundamental solution associated to $L$ on $\O$ (see e.g. \cite{Dre40, Fel37, Ito53}). If in addition the boundary conditions are considered, one can also study the fundamental solution adapted to the boundary conditions (see e.g. \cite{Ito54, Ito57a, Ito57b}). Such a fundamental solution with the boundary condition is usually called the Green's function. In particular, if the boundary condition is of Neumann type, then the associated fundamental solution is called the Neumann Green's function. When the coefficients of the parabolic operator $L$ are independent of the time $t$, the Neumann Green's function is invariant under the time translation. Consequently, it automatically generates a Neumann heat kernel which has a simpler form but captures the essential properties of the Neumann Green's function. The operator considered in this paper is just the heat operator $L=\p_{t}-\Delta_{x}$ whose coefficients are constants, so we will first state the precise definitions of the associated Neumann Green's function and the Neumann heat kernel, and then collect some classical properties which are needed later.

Roughly speaking, for the heat operator 
\be\label{heat op}
L_{tx}=\p_{t}-\Delta_{x}\ee
with the Neumann boundary condition, the associated Green's function on $\O$, which is also called the Neumann Green's function on $\O$, is  a function $G(x,t,y,s)$ defined on $\{(x,t,y,s): x,y\in\ol{\O}, t,s\in\m{R}, s<t\}$ such that for any fixed $s\in\m{R}$ and $y\in\ol{\O}$,
\[\left\{\begin{array}{rll}
(\p_{t}-\Delta_{x})G(x,t,y,s) &=0, & \quad\forall\,x\in\ol{\O},\,t>s, \vspace{0.02in}\\
\dfrac{\p G(x,t,y,s)}{\p n(x)} &=0, & \quad\forall\,x\in\p\O,\, t>s, \vspace{0.02in}\\
\lim\limits_{t\rightarrow s^{+}} G(x,t,y,s) &=\delta(x-y), & \quad\text{in distributional sense.}
\end{array}\right.\]
In other words, for any fixed $s\in\m{R}$ and for any test function $\psi$ that satisfies 
\be\label{compatible initial}
\psi\in C(\ol{\O})\quad \text{and}\quad \frac{\p\psi(x)}{\p n(x)}=0,\quad\forall\, x\in\p\O,\ee
the function $v(x,t)$ defined as 
\[v(x,t)=\int_{\O}G(x,t,y,s)\psi(y)\,dy, \quad\forall\,x\in\ol{\O},\, t>s,\]
solves the following initial-boundary value problem:
\be\label{heat eq with Neumann cond}\left\{\begin{array}{lll}
(\p_{t}-\Delta_{x})v(x,t)=0 &\text{in}& \Omega\times (s,\infty), \vspace{0.02in}\\
\dfrac{\partial v(x,t)}{\partial n(x)}=0 &\text{on}& \p\O\times (s,\infty), \vspace{0.02in}\\
v(x,s)=\psi(x) &\text{in}& \Omega.
\end{array}\right.\ee

For the formal definition of the Neumann Green's function associated to the heat operator (\ref{heat op}) on $\O$, we follow (\cite{Ito54}, Page 171).
\begin{definition}\label{Def, NGF for heat}
Let $\O$ be a bounded domain in $\m{R}^{n}$ with $C^{2}$ boundary $\p\O$. Then we define the Neumann Green's function for the heat operator in $\O$ to be a continuous function $G(x,t,y,s)$ on $\{(x,t,y,s): x,y\in\ol{\O}, t,s\in\m{R}, s<t\}$ such that for any fixed $s\in\m{R}$ and for any $\psi$ in (\ref{compatible initial}), the function $v(x,t)$ defined as
\be\label{def of v}
v(x,t)=\int_{\O}G(x,t,y,s)\psi(y)\,dy\ee
belongs to $C^{2,1}\big(\ol{\O}\times (s,\infty)\big)$ and solves (\ref{heat eq with Neumann cond}) in the following sense:

\be\left\{\label{simple soln by Green fn}\begin{array}{rll}
(\p_{t}-\Delta_{x})v(x,t) &=0, & \quad\forall\,x\in\ol{\O},\,t>s, \vspace{0.02in}\\
\dfrac{\p v(x,t)}{\p n(x)} &=0, & \quad\forall\,x\in\p\O,\, t>s, \vspace{0.02in}\\
\lim\limits_{t\rightarrow s^{+}} v(x,t) &=\psi(x), & \quad\text{uniformly in $x\in\ol{\O}$}. 
\end{array}\right.\ee
\end{definition}

\begin{lemma}\label{Lemma, prop of Green fn}
Let $\O$ be a bounded domain in $\m{R}^{n}$ with $C^{2}$ boundary $\p\O$. Then there exists a unique Neumann Green's function $G(x,t,y,s)$ for the heat operator in $\O$ as in Definition \ref{Def, NGF for heat}. In addition, it has the following properties.
\begin{enumerate}[(a)]
\item $G(x,t,y,s)$ is $C^{2}$ in $x$ and $y$ ($x,y\in\ol{\O}$), and $C^{1}$ in $t$ and $s$ ($s<t$).

\item For fixed $s\in\m{R}$ and $y\in\ol{\O}$, as a function in $x$ and $t$ ($x\in\ol{\O}$ and $t>s$), $G(x,t,y,s)$ satisfies
\be\left\{\label{Green fn solves heat and bdry}\begin{array}{rl}
(\p_{t}-\Delta_{x})G(x,t,y,s) &=0, \quad\forall\,x\in\ol{\O},\, t>s, \vspace{0.02in}\\
\dfrac{\p G(x,t,y,s)}{\p n(x)} &=0, \quad\forall\,x\in\p\O,\, t>s.
\end{array}\right.\ee

\item For any $s\in\m{R}$ and $\psi$ in (\ref{compatible initial}), the function $v(x,t)$ defined in (\ref{def of v}) is the unique function in $C^{2,1}\big(\ol{\O}\times (s,\infty)\big)$ that satisfies (\ref{simple soln by Green fn}).

\item $G(x,t,y,s)\geq 0$ for any $x,y\in\ol{\O}$ and $s<t$.
\item $\int_{\O}G(x,t,y,s)\,dy=1$ for any $x\in\ol{\O}$ and $s<t$.
\item For any $x,y\in\ol{\O}$ and $s<t$, 
\[G(x,t,y,s)=G(x,t-s,y,0) \quad\text{and}\quad G(x,t,y,s)=G(y,t,x,s).\]
\end{enumerate}
\end{lemma}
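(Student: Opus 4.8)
The statement to prove is Lemma \ref{Lemma, prop of Green fn}, which asserts existence, uniqueness, and a list of properties (a)--(f) of the Neumann Green's function for the heat operator on a bounded $C^2$ domain $\O$. Here is how I would organize the proof.

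\medskip

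\textbf{Construction and existence.} The plan is to build $G$ by the classical parametrix method (Levi's method), exactly as in the references cited in the excerpt (\cite{Ito54}, or the texts \cite{Fri64, LSU68, Lie96}). One starts from the free-space heat kernel $\Phi(x-y,t-s)$ as a first approximation, which already solves the heat equation and produces the correct initial singularity $\delta(x-y)$; it fails only the Neumann boundary condition. I would then correct it by adding a smooth term: write $G(x,t,y,s)=\Phi(x-y,t-s)+\Psi(x,t,y,s)$, where $\Psi$ solves the heat equation in $x$, is smooth up to the boundary, has zero initial data, and is chosen so that $\p_{n(x)}\Psi = -\p_{n(x)}\Phi(x-y,t-s)$ on $\p\O\times(s,\infty)$. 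Because $\p\O$ is $C^2$ and $\p_{n(x)}\Phi(x-y,t-s)$ has only an integrable singularity on the boundary (the tangential-type cancellation gives a weakly singular kernel), the corresponding single-layer heat potential equation is solvable by a Neumann series / Volterra iteration, yielding a density whose induced potential $\Psi$ has the required regularity. Translation invariance in time, $G(x,t,y,s)=G(x,t-s,y,0)$, is immediate from the construction since the heat operator has constant coefficients and $\Phi$ depends only on $t-s$; this gives the first identity in (f).

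\medskip

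\textbf{Properties (a)--(c).} Part (a), the stated $C^2$-in-space, $C^1$-in-time regularity, follows from the smoothness of $\Psi$ obtained in the construction together with the known regularity of $\Phi$ away from its singularity; interior parabolic regularity upgrades this as needed. Part (b) is built into the construction: for fixed $(y,s)$, $G$ solves the heat equation in $(x,t)$ for $t>s$ and satisfies the homogeneous Neumann condition, since both $\Phi$ and $\Psi$ solve the heat equation and the correction was designed precisely to kill the normal derivative. For part (c), the existence half is Definition \ref{Def, NGF for heat} verified on the constructed $G$: one checks that $v(x,t)=\int_\O G(x,t,y,s)\psi(y)\,dy$ lies in $C^{2,1}(\ol\O\times(s,\infty))$, solves the heat equation with zero Neumann data, and $v(x,t)\to\psi(x)$ uniformly as $t\to s^+$ --- the last point coming from the approximate-identity behavior of $\Phi$ plus the compatibility condition $\p_n\psi=0$ in (\ref{compatible initial}). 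The uniqueness half --- both uniqueness of such a $v$ and uniqueness of $G$ itself --- follows from the maximum principle for the heat equation with Neumann boundary conditions: a $C^{2,1}(\ol\O\times(s,\infty))$ solution with zero initial and zero Neumann data must vanish, and since $\psi$ in (\ref{compatible initial}) is essentially arbitrary (dense), two Green's functions must agree.

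\medskip

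\textbf{Properties (d), (e), and the symmetry in (f).} For (d), nonnegativity: if $\psi\ge 0$ in (\ref{compatible initial}), then $v$ as above is $\ge 0$ by the maximum principle, so $\int_\O G(x,t,y,s)\psi(y)\,dy\ge 0$ for all admissible $\psi\ge 0$; varying $\psi$ over an approximate identity concentrating near a point $y_0$ forces $G(x,t,y_0,s)\ge 0$, using continuity of $G$. For (e), the total-mass identity: take $\psi\equiv 1$, which satisfies (\ref{compatible initial}); then $v\equiv 1$ is the unique solution of (\ref{simple soln by Green fn}), so $\int_\O G(x,t,y,s)\,dy = v(x,t)=1$. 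The spatial symmetry $G(x,t,y,s)=G(y,t,x,s)$ (equivalently $G(x,t,y,0)=G(y,t,x,0)$) is proved by the standard duality/Green's-identity argument: fix two points and two times $s<\sigma<t$, apply the parabolic Green identity on $\O\times(\sigma-\epsilon,t-\epsilon)$ to $u_1(\cdot,\tau)=G(\cdot,\tau,x,s)$ (forward solution) and $u_2(\cdot,\tau)=G(x',t-\tau+s,\cdot,s)$-type adjoint solution; since the heat operator's formal adjoint is $-\p_t-\Delta$ and $G$ satisfies homogeneous Neumann data, all boundary terms vanish and the interior term telescopes to the claimed identity after letting $\epsilon\to0^+$ and using the initial singularities. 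Time-translation in (f) was already noted.

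\medskip

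\textbf{Main obstacle.} The genuinely technical step is the construction itself --- establishing that the boundary-correction $\Psi$ exists with the required smoothness up to $\ol\O$ under only $C^2$ regularity of $\p\O$. This is where one must control the weak singularity of $\p_{n(x)}\Phi$ on the boundary, solve the associated Volterra integral equation of the second kind in an appropriate space of densities, and then show the resulting single-layer potential is $C^{2,1}$ in the interior and continuous (with the right derivative behavior) up to the boundary. Everything else --- (b), (d), (e), uniqueness, symmetry --- then reduces to clean applications of the maximum principle and Green's identity. Since this construction is entirely classical and the excerpt explicitly cites \cite{Ito54}, \cite{Fri64}, \cite{LSU68}, \cite{Kre14} for it, I would present it by reduction to those references rather than reproducing the parametrix estimates, and devote the written proof mainly to verifying that the object so constructed satisfies Definition \ref{Def, NGF for heat} and properties (a)--(f).
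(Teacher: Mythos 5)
Your proposal is sound in outline and, like the paper, it ultimately defers the genuinely hard step (the parametrix/heat-potential construction of $G$ with regularity up to $\p\O$) to the classical references; the difference is in how the listed properties are then obtained. The paper's proof is almost entirely by citation: existence, uniqueness and parts (a)--(d) are quoted from Theorems 1--4 of \cite{Ito54} (with only a remark that $C^{2}$ regularity of $\p\O$ suffices for the heat operator in Euclidean space, even though \cite{Ito54} states $C^{4,\gamma}$ for general manifolds), part (f) is quoted from Theorems 3 and 4 of \cite{Ito57b}, and only part (e) is argued directly --- by the same $\psi\equiv 1$ argument you give. You instead supply self-contained arguments for uniqueness (density of compatible $\psi$), for (d) (maximum principle plus an approximate identity in $\psi$), and for the symmetry in (f) (the duality/Green's-identity argument), which makes the proof more transparent but also longer and slightly more delicate: in the duality argument the passage $\eps\to 0^{+}$ requires the approximate-identity property of $G$ in its \emph{first} argument, which is not what Definition \ref{Def, NGF for heat} or part (c) gives directly and must be extracted from the decomposition $G=\Phi+\Psi$ (or from symmetry itself, which would be circular); similarly, the $C^{2}$-in-$x$ regularity up to $\ol{\O}$ asserted in (a) under a merely $C^{2}$ boundary is exactly the point the paper disposes of by appealing to \cite{Ito54}, and your sketch only claims interior regularity plus continuity up to the boundary, so you too must lean on the references there. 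In short: your route buys a more self-contained verification of (d) and (f) at the cost of these two technical points, while the paper's route buys brevity by quoting \cite{Ito54, Ito57b} wholesale; both treat (e) identically.
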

\begin{proof}
The existence and uniqueness of the Neumann Green's function, and part (a)--(d) follow from Theorem 1--Theorem 4 in \cite{Ito54}. We just want to remark that although the regularity of $\p\O$ in \cite{Ito54} is required to be $C^{4,\gamma}$ for the general manifold and the general second order parabolic operators, here in the case of the Euclidean space and the heat operator, $\p\O$ being $C^{2}$ is enough. 

\begin{itemize}
\item For part (e), let $\psi\equiv 1$. Then $v\equiv 1$ obviously satisfies  (\ref{heat eq with Neumann cond}). Combining this fact with part (c) concludes (e). 

\item For part (f), since we are considering the heat equation whose coefficients are constants, it follows from Theorem 3 in \cite{Ito57b} that there exists a function $N(x,y,t)$ such that $G(x,t,y,s)=N(x,y,t-s)$. In addition, Theorem 4 in \cite{Ito57b} claims that $N(x,y,t)=N(y,x,t)$. Consequently, part (f) is justified.
\end{itemize} \end{proof}

As we have seen from the above proof that there exists a function $N(x,y,t)$ such that $N(x,y,t-s)=G(x,t,y,s)$. In particular, choosing $s=0$ leads to 
\be\label{def of NHK}
N(x,y,t)=G(x,t,y,0).\ee
This function $N(x,y,t)$ is called the Neumann heat kernel of $\O$.

\begin{definition}\label{Def, NHK}
Let $\O$ be a bounded domain in $\m{R}^{n}$ with $C^{2}$ boundary $\p\O$. A function $N(x,y,t)$ on $\ol{\O}\times\ol{\O}\times(0,\infty)$ is called a Neumann heat kernel if 
the function $G(x,t,y,s)$ defined by 
\[G(x,t,y,s)=N(x,y,t-s)\]
is the Neumann Green's function in Definition \ref{Def, NGF for heat}.
\end{definition}

Combining (\ref{def of NHK}) and Lemma \ref{Lemma, prop of Green fn}, we list some properties of the Neumann heat kernel.

\begin{corollary}\label{Cor, prop of NHK}
Let $\O$ be a bounded domain in $\m{R}^{n}$ with $C^{2}$ boundary $\p\O$. Then there exists a unique Neumann heat kernel $N(x,y,t)$ of $\O$ as in Definition \ref{Def, NHK}. In addition, it has the following properties. 
\begin{enumerate}[(a)]
\item $N(x,y,t)$ is $C^{2}$ in $x$ and $y$ ($x,y\in\ol{\O}$), and $C^{1}$ in $t$ ($t>0$).

\item For fixed $y\in\ol{\O}$, as a function in $(x,t)$, $N(x,y,t)$ satisfies
\be\left\{\label{NHK solves heat and bdry}\begin{array}{rl}
(\p_{t}-\Delta_{x})N(x,y,t) &=0, \quad\forall\,x\in\ol{\O},\, t>0, \vspace{0.02in}\\
\dfrac{\p N(x,y,t)}{\p n(x)} &=0, \quad\forall\,x\in\p\O,\, t>0.
\end{array}\right.\ee

\item For fixed $\psi$ in (\ref{compatible initial}), the function $w(x,t)$ defined by
\be\label{def of w}
w(x,t)=\int_{\O}N(x,y,t)\psi(y)\,dy\ee
is the unique function in $C^{2,1}\big(\ol{\O}\times (0,\infty)\big)$ that satisfies the following equations.
\be\left\{\label{simple soln by NHK}\begin{array}{rll}
(\p_{t}-\Delta_{x})w(x,t) &=0,  &\quad\forall\,x\in\ol{\O},\,t>0, \vspace{0.02in}\\
\dfrac{\p w(x,t)}{\p n(x)} &=0, &\quad\forall\,x\in\p\O,\, t>0, \vspace{0.02in}\\
\lim\limits_{t\rightarrow 0^{+}} w(x,t) &=\psi(x), &\quad\text{uniformly in $x\in\ol{\O}$}. 
\end{array}\right.\ee

\item $N(x,y,t)\geq 0$ and $N(x,y,t)=N(y,x,t)$  for any $x,y\in\ol{\O}$ and $t>0$.
\item $\int_{\O}N(x,y,t)\,dy=1$ for any $x\in\ol{\O}$ and $t>0$.
\end{enumerate}
\end{corollary}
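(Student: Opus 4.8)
The plan is to deduce every assertion directly from Lemma \ref{Lemma, prop of Green fn} together with the defining relation $N(x,y,t)=G(x,t,y,0)$ recorded in (\ref{def of NHK}); no genuinely new analysis is required, only a translation of statements about the Neumann Green's function $G$ into statements about $N$ via the time-translation invariance $G(x,t,y,s)=G(x,t-s,y,0)$.

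First I would settle existence and uniqueness. Existence is immediate: Lemma \ref{Lemma, prop of Green fn} produces the Neumann Green's function $G$, and part (f) of that lemma yields a function $N$ with $G(x,t,y,s)=N(x,y,t-s)$; this $N$ is a Neumann heat kernel in the sense of Definition \ref{Def, NHK} by construction. For uniqueness, suppose $N_{1}$ and $N_{2}$ are both Neumann heat kernels; then $G_{i}(x,t,y,s):=N_{i}(x,y,t-s)$ are both Neumann Green's functions, so the uniqueness clause of Lemma \ref{Lemma, prop of Green fn} forces $G_{1}\equiv G_{2}$, whence $N_{1}\equiv N_{2}$ upon setting $s=0$.

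Then I would verify (a)--(e) one at a time. For (a): the regularity of $N$ in $x,y$ is exactly part (a) of Lemma \ref{Lemma, prop of Green fn}, and $C^{1}$ regularity in $t$ on $(0,\infty)$ follows since $t\mapsto G(x,t,y,0)$ is $C^{1}$ for $t>s=0$. Part (b) is part (b) of the lemma specialized to $s=0$. For (c), given $\psi$ as in (\ref{compatible initial}), the function $w$ in (\ref{def of w}) coincides with the function $v$ of (\ref{def of v}) at $s=0$, so it lies in $C^{2,1}\big(\ol{\O}\times(0,\infty)\big)$, solves (\ref{simple soln by NHK}) — which is precisely (\ref{simple soln by Green fn}) with $s=0$ — and is unique in that class by part (c) of the lemma. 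For (d): nonnegativity is part (d) of the lemma at $s=0$, while the symmetry $N(x,y,t)=N(y,x,t)$ follows from the spatial symmetry $G(x,t,y,s)=G(y,t,x,s)$ in part (f). Part (e) is part (e) of the lemma at $s=0$.

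There is essentially no hard step here; this is a bookkeeping corollary. The only points deserving a moment's care are checking that the uniqueness class in Definition \ref{Def, NHK} and in Corollary \ref{Cor, prop of NHK}(c) matches the one in Lemma \ref{Lemma, prop of Green fn}(c) under the shift $t\mapsto t-s$, and that the ``uniformly in $x\in\ol{\O}$'' convergence in (\ref{simple soln by NHK}) is exactly the $s=0$ instance of the corresponding statement in (\ref{simple soln by Green fn}); both are transparent once written out.
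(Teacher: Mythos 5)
Your proposal is correct and follows exactly the paper's route: the paper dispatches this corollary as a direct consequence of Definition \ref{Def, NGF for heat}, Lemma \ref{Lemma, prop of Green fn} and Definition \ref{Def, NHK}, which is precisely the $s=0$ specialization and time-translation bookkeeping you carry out. Your write-up simply makes explicit the details the paper leaves implicit; no gap.
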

\begin{proof}
These are direct consequences of Definition \ref{Def, NGF for heat}, Lemma \ref{Lemma, prop of Green fn} and Definition \ref{Def, NHK}.
\end{proof}

Unlike the heat kernel $\Phi$ of $\m{R}^{n}$ in (\ref{fund soln of heat eq}), the Neumann heat kernel $N(x,y,t)$ of $\O$ usually does not have an explicit formula. Nevertheless, when $t$ is small, $N(x,y,t)$ can be bounded in terms of $\Phi$.

\begin{lemma}\label{Lemma, quant of NHK}
There exists $C=C(n,\O)$ such that for any $x,y\in\ol{\O}$ and $t\in(0,1]$,
\be\label{quant of NHK}
0\leq N(x,y,t)\leq C\,\Phi(x-y,2t),\ee
where $\Phi$ is defined as in (\ref{fund soln of heat eq}).
\end{lemma}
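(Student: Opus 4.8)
The lower bound $N(x,y,t)\ge 0$ is already part of Corollary \ref{Cor, prop of NHK}(d), so the real content of the lemma is the Gaussian upper bound, and the first thing to notice is why no cheap comparison argument will do: neither inequality a comparison principle would require is available without convexity. On the one hand $(\p_{t}-\Delta_{x})\big[\Phi(x-y,2t)\big]=(\Delta\Phi)(x-y,2t)$ changes sign inside $\O$, so $\Phi(\,\cdot\,-y,2t)$ is not a supersolution; on the other hand $\frac{\p}{\p n(x)}\Phi(x-y,2t)=-\frac{(x-y)\cdot\ora{n}(x)}{4t}\,\Phi(x-y,2t)$ for $x\in\p\O$, whose sign is controlled only when $\O$ is convex --- exactly the hypothesis we are trying to discard. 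The plan, therefore, is to establish a non-sharp Gaussian upper bound for the Neumann heat semigroup of $\O$ by a robust (spectral/potential-theoretic) method; the factor $2t$ in (\ref{quant of NHK}), rather than $t$, is precisely what makes a non-sharp constant in the Gaussian exponent suffice, so that no delicate optimization is needed.

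First I would prove the on-diagonal estimate $0\le N(x,y,t)\le C\,t^{-n/2}$ for all $x,y\in\ol{\O}$ and $t\in(0,1]$, with $C=C(n,\O)$. Since $\p\O$ is $C^{2}$, $\O$ is a bounded Lipschitz domain, hence a Sobolev extension domain, so a Nash-type inequality $\|f\|_{L^{2}(\O)}^{2+4/n}\le C_{0}\big(\|\nabla f\|_{L^{2}(\O)}^{2}+\|f\|_{L^{2}(\O)}^{2}\big)\,\|f\|_{L^{1}(\O)}^{4/n}$ holds on $W^{1,2}(\O)$. Fed into the standard Nash argument for the Neumann heat semigroup $T_{t}$, $(T_{t}\psi)(x)=\int_{\O}N(x,y,t)\psi(y)\,dy$ as in (\ref{def of w}) (the lower-order term costs only a factor $e^{Ct}$, harmless for $t\le 1$), this gives the ultracontractivity bound $\|T_{t}\|_{L^{1}\to L^{\infty}}\le C\,t^{-n/2}$ on $(0,1]$, which by Corollary \ref{Cor, prop of NHK}(c)--(d) is the stated on-diagonal estimate on $N$. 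Next I would upgrade this to Gaussian decay by Davies' exponential-perturbation method: for a bounded $1$-Lipschitz weight $\rho$ on $\ol{\O}$ and $\alpha>0$, conjugating $T_{t}$ by $e^{\alpha\rho}$ and running a Gronwall estimate on the $L^{2}$-norm of the conjugated semigroup (using $|\nabla\rho|\le 1$ together with the Neumann Dirichlet form, so that the twisted generator is a form-bounded perturbation of size $O(\alpha^{2})$) yields $\big\|e^{\alpha\rho}T_{t}e^{-\alpha\rho}\big\|_{L^{1}\to L^{\infty}}\le C\,t^{-n/2}e^{C\alpha^{2}t}$, hence $N(x,y,t)\le C\,t^{-n/2}\exp\!\big(C\alpha^{2}t-\alpha(\rho(x)-\rho(y))\big)$. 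Choosing $\rho(z)=\min\{|z-y|,|x-y|\}$, so that $\rho(x)-\rho(y)=|x-y|$, and optimizing in $\alpha$ produces, for every $\eps>0$, a constant $C_{\eps}=C_{\eps}(n,\O)$ with $N(x,y,t)\le C_{\eps}\,t^{-n/2}\exp\!\big(-|x-y|^{2}/((4+\eps)t)\big)$ on $(0,1]$; taking $\eps=4$ gives $N(x,y,t)\le C\,t^{-n/2}\exp\!\big(-|x-y|^{2}/(8t)\big)$.

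Since $\Phi(x-y,2t)=(8\pi t)^{-n/2}\exp\!\big(-|x-y|^{2}/(8t)\big)$, this last inequality is exactly $N(x,y,t)\le C(8\pi)^{n/2}\,\Phi(x-y,2t)$, i.e.\ (\ref{quant of NHK}) after renaming the constant; together with $N\ge 0$ this finishes the proof. An equivalent route, closer to the heat-potential language used elsewhere in the paper, would be to construct $N$ directly by Levi's parametrix method: in boundary normal coordinates flatten $\p\O$ locally and take as parametrix $\Phi(x-y,t)$ plus its reflection across the flattened boundary (which solves the heat equation and the homogeneous Neumann condition to leading order), then close the construction with a Volterra integral equation whose Neumann series converges for $t\le t_{0}(n,\O)$, the $k$-th term being dominated by $C_{k}\Phi(x-y,2t)$ with $\sum_{k}C_{k}<\infty$; the range $t\le t_{0}$ is then pushed up to $t\le 1$ by iterating the Chapman--Kolmogorov identity for $N$ a fixed number $m=m(t_{0})$ of times and using that the $m$-fold convolution of $\Phi(\,\cdot\,,2t/m)$ equals $\Phi(\,\cdot\,,2t)$ on $\m{R}^{n}$. \textbf{The main obstacle} in either approach is the same: producing the on-diagonal bound and the Davies conjugation (respectively, carrying out the parametrix construction near $\p\O$) with constants depending only on $n$ and $\O$, and correctly handling the boundary. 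Once any Gaussian upper bound $C\,t^{-n/2}e^{-c|x-y|^{2}/t}$ is in hand, matching it to the stated form is immediate, because the argument $2t$ in (\ref{quant of NHK}) leaves a full factor of two of slack in the exponent and so removes any need for the sharp constant $1/(4t)$.
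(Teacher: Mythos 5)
Your proposal is correct in substance and ends with exactly the same absorption step as the paper, but it reaches the Gaussian bound by a more self-contained route: the paper's entire proof is an application of Theorem 3.2.9 of Davies \cite{Dav89} (with $\lambda=\mu=1$, $\delta=1/2$), which gives $0\le N(x,y,t)\le C\max\{t^{-n/2},1\}\exp\big(-|x-y|^{2}/(6t)\big)$, and then for $t\in(0,1]$ this is absorbed into $C\,\Phi(x-y,2t)=C(8\pi t)^{-n/2}e^{-|x-y|^{2}/(8t)}$; what you do instead is sketch the proof of that cited theorem (Nash inequality on the extension domain $\O$ gives ultracontractivity of the Neumann semigroup, and Davies' exponential conjugation upgrades it to Gaussian decay), which buys independence from the reference at the cost of rebuilding standard heat-kernel machinery. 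Two cautions are in order. First, your parenthetical claim that the crude form-bounded perturbation plus Gronwall already yields the exponent $1/\big((4+\eps)t\big)$ for every $\eps>0$ is optimistic: that argument gives $\big\|e^{\alpha\rho}T_{t}e^{-\alpha\rho}\big\|_{L^{1}\to L^{\infty}}\le Ct^{-n/2}e^{C\alpha^{2}t}$ with an unspecified $C=C(n,\O)\ge 1$, hence after optimizing in $\alpha$ only $\exp\big(-|x-y|^{2}/(4Ct)\big)$; pushing $C$ down to $1+\eps$ is precisely the content of Davies' logarithmic Sobolev argument, i.e.\ of the theorem the paper cites, so you should either invoke it or verify that your constant satisfies $C\le 2$, which is all the lemma needs. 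Second, your closing remark that \emph{any} Gaussian upper bound $Ct^{-n/2}e^{-c|x-y|^{2}/t}$ suffices is not literally true: since $\O$ is bounded and $t\to0^{+}$ is allowed, $e^{-c|x-y|^{2}/t}$ can be dominated by a multiple of $e^{-|x-y|^{2}/(8t)}$ only if $c\ge 1/8$, so the slack afforded by the argument $2t$ in (\ref{quant of NHK}) is a factor of two, not unlimited; your main line of argument does produce $c=1/8$, so the proof stands, but the remark as stated is misleading.
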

\begin{proof}
Applying Theorem 3.2.9 on Page 90 of \cite{Dav89} with $\lambda=1$, $\mu=1$ and $\delta=1/2$ to the heat operator (\ref{heat op}). Then there exists $C_{1}=C_{1}(\O)$ such that
\[0\leq N(x,y,t)\leq C_{1}\max\{t^{-n/2},1\}\exp\bigg(-\frac{|x-y|^{2}}{6t}\bigg).\]
In particular, when $t\in(0,1]$, we have
\begin{align*}
0\leq N(x,y,t) &\leq C_{1}t^{-n/2}\exp\bigg(-\frac{|x-y|^{2}}{6t}\bigg) \\
&\leq 2^{n/2}C_{1}\Phi(x-y,2t).
\end{align*} \end{proof}

\subsection{Representation Formula By the Neumann Heat Kernel}
\label{Subsec, Rep formula by NHK}
One of the applications of the Neumann heat kernel is the representation formula of the solution to the heat equation with Neumann boundary conditions. As a heuristic argument, let's fix any $x\in\O$ and $t>0$ and pretend the solution $u$ to (\ref{Prob}) is sufficiently smooth. Then it follows from part (b) and (d) of Corollary \ref{Cor, prop of NHK} that 
\[(\p_{t}-\Delta_{y})N(x,y,t-\tau)=(\p_{t}-\Delta_{y})N(y,x,t-\tau)=0, \quad\forall\, y\in\ol{\O},\,0<\tau<t.\]
As a result, 
\[\int_{0}^{t}\int_{\O}(\p_{t}-\Delta_{y})N(x,y,t-\tau)\,u(y,\tau)\,dy\,d\tau=0.\]
Equivalently,
\[\int_{0}^{t}\int_{\O}(-\p_{\tau}-\Delta_{y})N(x,y,t-\tau)\,u(y,\tau)\,dy\,d\tau=0.\]
Now formally integrating by parts and taking advantage of (b), (c) and (d) in Corollary \ref{Cor, prop of NHK}, we obtain
\[\begin{split}
u(x,t)=& \int_{0}^{t}\int_{\O}N(x,y,t-\tau)\,(\p_{\tau}-\Delta_{y})u(y,\tau)\,dy\,d\tau+\int_{\O}N(x,y,t)\,u(y,0)\,dy\\
&+\int_{0}^{t}\int_{\p\O}N(x,y,t-\tau)\,\frac{\p u(y,\tau)}{\p n(y)}\,dS(y)\,d\tau. 
\end{split}\]
Keeping in mind that $u$ is the solution to (\ref{Prob}), so
\[u(x,t)=\int_{\O}N(x,y,t)u_{0}(y)\,dy+\int_{0}^{t}\int_{\Gamma_{1}}N(x,y,t-\tau)u^{q}(y,\tau)\,dS(y)\,d\tau.\]
This is the representation formula that is desired, but we still need to justify it rigorously. Since the proof is standard but tedious, we decide to put it into Appendix \ref{Sec, proof for rep formula}. Here we will just present the formal statement of the representation formula.

\begin{lemma}\label{Lemma, rep for soln, initial}
Let $u$ be the solution to (\ref{Prob}) and denote $T^{*}$ to be its lifespan. Then for any $(x,t)\in\ol{\O}\times(0,T^{*})$, 
\be\label{rep for soln, initial}
u(x,t)=\int_{\O}N(x,y,t)u_{0}(y)\,dy+\int_{0}^{t}\int_{\Gamma_{1}}N(x,y,t-\tau)u^{q}(y,\tau)\,dS(y)\,d\tau.\ee
\end{lemma}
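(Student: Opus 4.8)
The plan is to establish \eqref{rep for soln, initial} by the standard parametrix/Green's-identity argument, but carried out with enough care to handle the two places where $u$ fails to be smooth: near the lateral boundary $\p\O$ and near the initial time $t=0$. Fix $(x,t)\in\ol\O\times(0,T^\ast)$. The heuristic computation in the text is exactly right; the issue is that integration by parts on $\O\times(0,t)$ is not immediately licensed because $u\in C^{2,1}(\O\times(0,T^\ast])\cap C(\ol\O\times[0,T^\ast])$ only, and $N(x,\cdot,t-\tau)$ is singular at $\tau=t$. So the first step is to introduce a double cutoff: replace the time interval $(0,t)$ by $(\delta,t-\eps)$ for small $\delta,\eps>0$, and replace $\O$ by an interior approximation $\O_\rho=\{y\in\O:\operatorname{dist}(y,\p\O)>\rho\}$ when necessary, or (cleaner) keep $\O$ but work on $(\delta,t-\eps)$ where everything is smooth up to $\p\O$ by interior-parabolic regularity away from $t=0$.

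Second, on the smooth region I apply Green's second identity in $y$ together with $\p_\tau$ integration: using $(\p_t-\Delta_x)N(x,y,s)=0$ and the symmetry $N(x,y,s)=N(y,x,s)$ from Corollary \ref{Cor, prop of NHK}(b),(d), one gets
\be
\int_{\delta}^{t-\eps}\!\!\int_{\O}\big[N(x,y,t-\tau)(\p_\tau-\Delta_y)u(y,\tau)-u(y,\tau)(-\p_\tau-\Delta_y)N(x,y,t-\tau)\big]\,dy\,d\tau=0,
\ee
and the left side expands, via the divergence theorem and the fundamental theorem of calculus in $\tau$, into a boundary term on $\p\O\times(\delta,t-\eps)$, a time-slice term at $\tau=\delta$, and a time-slice term at $\tau=t-\eps$. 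The boundary term: since $\p N/\p n(y)=0$ on $\p\O$ (Corollary \ref{Cor, prop of NHK}(b)) and $\p u/\p n=u^q$ on $\Gamma_1$, $=0$ on $\Gamma_2$, the $\p\O$-integral collapses to $\int_\delta^{t-\eps}\int_{\Gamma_1}N(x,y,t-\tau)u^q(y,\tau)\,dS(y)\,d\tau$. The equation $(\p_\tau-\Delta_y)u=0$ kills the first volume integrand. So we are left with the identity
\be
\int_{\O}N(x,y,\eps)u(y,t-\eps)\,dy=\int_{\O}N(x,y,t-\delta)u(y,\delta)\,dy+\int_{\delta}^{t-\eps}\!\!\int_{\Gamma_1}N(x,y,t-\tau)u^q(y,\tau)\,dS(y)\,d\tau.
\ee

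Third, I pass to the limits $\eps\to0^+$ and $\delta\to0^+$. For the left side, $N(x,\cdot,\eps)$ is an approximate identity on $\O$ in the sense of Corollary \ref{Cor, prop of NHK}(c),(e): since $u(\cdot,t-\eps)\to u(\cdot,t)$ uniformly on $\ol\O$ as $\eps\to0$ and $\int_\O N(x,y,\eps)\,dy=1$, the left side converges to $u(x,t)$ — here one uses $N\ge0$ together with the Gaussian localization $N(x,y,\eps)\le C\Phi(x-y,2\eps)$ from Lemma \ref{Lemma, quant of NHK} to show the mass concentrates near $x$, exactly as in the classical heat-kernel-is-an-approximate-identity argument. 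For the first term on the right, as $\delta\to0$, $u(\cdot,\delta)\to u_0$ uniformly and $N(x,\cdot,t-\delta)\to N(x,\cdot,t)$ uniformly on $\ol\O$ (continuity of $N$ for $t>0$), giving $\int_\O N(x,y,t)u_0(y)\,dy$. For the double integral, the only danger is integrability of $N(x,y,t-\tau)$ up to $\tau=t$; but Lemma \ref{Lemma, quant of NHK} bounds it by $C\Phi(x-y,2(t-\tau))$, and since $u^q$ is bounded on $\Gamma_1\times[0,t]$ while $\int_0^t\int_{\Gamma_1}\Phi(x-y,2(t-\tau))\,dS(y)\,d\tau<\infty$ (the singularity $|t-\tau|^{-n/2}$ against an $(n-1)$-dimensional surface is integrable after one $\tau$-integration, precisely the computation underlying Lemma \ref{Lemma, bdry-time int, critical}), dominated convergence lets $\delta\to0$ and $\eps\to0$ freely. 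Collecting the three limits yields \eqref{rep for soln, initial}, and since the argument only used $x\in\ol\O$ (not $x\in\O$) — the whole point of the Neumann kernel being that no single-layer jump term appears — the formula holds on $\ol\O$.

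The main obstacle, and the reason the text defers the proof to the appendix, is the justification of the integration by parts right up to $\p\O$ when $u$ is only $C^{2,1}$ in the open cylinder: one must either invoke parabolic Schauder/boundary regularity to know $u\in C^{2,1}(\ol\O\times[\delta,T^\ast])$ for each $\delta>0$ (which holds because the Neumann data $u^q$ on $\Gamma_1$ is smooth in $t$ away from $0$ and the interface condition \eqref{interface bdry deri} pins down the behavior along $\wt\Gamma$), or, following the route used for \eqref{rep formula by heat kernel, inside} in \cite{YZ16}, mollify $u$ in a neighborhood of $\p\O$, apply the identity to the mollification, and control the error using the $L^1$-bound on $N$ near $\p\O$ supplied by Lemma \ref{Lemma, quant of NHK}. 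I expect to take the second route, since it is exactly parallel to \cite{YZ16} and avoids delicate regularity claims at the interface $\wt\Gamma$; the bookkeeping of the mollification error terms is the tedious part but is routine once the Gaussian bound on $N$ is in hand.
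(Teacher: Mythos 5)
Your overall strategy is the same as the paper's heuristic (test against the Neumann kernel, use $\partial N/\partial n=0$ to kill the lateral term, then pass to limits with the Gaussian bound), and the limit-passing parts you describe are sound; in fact your approximate-identity argument at the top time slice, using $N\geq 0$, $\int_{\Omega}N(x,y,\epsilon)\,dy=1$ and Lemma \ref{Lemma, quant of NHK}, is a legitimate substitute for the paper's Lemma \ref{Lemma, delta fn against initial, general} and works directly for $x\in\partial\Omega$, which the paper instead handles by a separate approximation of $x$ from the interior. The genuine gap is in the step you defer: justifying Green's identity up to $\partial\Omega$ when $u$ is only $C^{2,1}$ in the open cylinder. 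Your fallback (a) is actually false: $u$ cannot be $C^{1}$ up to $\overline{\Omega}$ near the interface $\widetilde{\Gamma}$, because the Neumann data jumps from $u^{q}>0$ to $0$ across $\widetilde{\Gamma}$ and only the averaged condition (\ref{interface bdry deri}) holds there, so no statement of the form $u\in C^{2,1}\big(\overline{\Omega}\times[\delta,T^{*}]\big)$ is available. Your preferred route (b) is the right idea in spirit, but as sketched it does not close: after mollifying $u$ in a boundary layer, the error terms produced by Green's identity involve $\nabla u$ and $\Delta u$ near $\partial\Omega$, which are not controlled by the stated regularity $C^{2,1}\big(\Omega\times(0,T^{*}]\big)\cap C\big(\overline{\Omega}\times[0,T^{*}]\big)$; an $L^{1}$-type bound on $N$ near the boundary cannot absorb them, so ``routine bookkeeping'' is not enough.

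What the paper actually does is outsource precisely this step to \cite{YZ16}: by Definition 3.4 and Theorem 3.7 there, the classical solution is already a weak solution, i.e.\ the integrated-by-parts identity holds against any test function that is $C^{2}$ in $y$ on $\overline{\Omega}$ and $C^{1}$ in $\tau$ on $[0,t]$. It then chooses $\phi^{\epsilon}(y,\tau)=N(x,y,t+\epsilon-\tau)$; the time shift makes this smooth on the whole closed cylinder, so no cutoff at $\tau=t$, no cutoff at $\tau=0$, and no mollification near $\partial\Omega$ are needed, and its conormal derivative vanishes identically. The only remaining analysis is the limit $\epsilon\rightarrow 0^{+}$ (Lemma \ref{Lemma, delta fn against initial, general}, needed because Corollary \ref{Cor, prop of NHK}(c) requires the compatibility condition, together with Lemma \ref{Lemma, conv of bdry-time int of NHK}) and the interior-approximation step for $x\in\partial\Omega$. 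If you replace your mollification sketch by an appeal to that weak formulation (or reproduce its proof), your argument is complete; as written, the central step of the lemma remains unproved.
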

\begin{proof}
See Appendix \ref{Sec, proof for rep formula}.
\end{proof}

\begin{corollary}\label{Cor, rep for soln}
Let $u$ be the solution to (\ref{Prob}) and denote $T^{*}$ to be its lifespan. Then for any $T\in [0,T^{*})$ and $(x,t)\in\ol{\O}\times(0,T^{*}-T)$,
\be\label{rep for soln}
u(x,T+t)=\int_{\O}N(x,y,t)u(y,T)\,dy+\int_{0}^{t}\int_{\Gamma_{1}}N(x,y,t-\tau)u^{q}(y,T+\tau)\,dS(y)\,d\tau.\ee
\end{corollary}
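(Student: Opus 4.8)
The plan is to reduce the statement to Lemma \ref{Lemma, rep for soln, initial} by a translation in time. Fix $T\in[0,T^{*})$. Since $u$ is the solution to (\ref{Prob}) with lifespan $T^{*}$, and $u(\cdot,T)\in C^{1}(\ol{\O})$ with $u(\cdot,T)\geq 0$, $u(\cdot,T)\not\equiv 0$ (recall $u>0$ for $t>0$ by the positivity result quoted from \cite{YZ16}), the function
\[
\wt{u}(x,t)\triangleq u(x,T+t)
\]
is the solution to the problem (\ref{Prob}) with initial data $\wt{u}_{0}=u(\cdot,T)$, now posed on the time interval $(0,T^{*}-T)$. Indeed, the heat operator $\p_{t}-\Delta_{x}$ has constant (hence time-independent) coefficients, so $\wt{u}$ satisfies the PDE in $\O\times(0,T^{*}-T]$, the boundary conditions $\p\wt{u}/\p n=\wt{u}^{q}$ on $\Gamma_{1}$ and $\p\wt{u}/\p n=0$ on $\Gamma_{2}$ for $t\in(0,T^{*}-T]$, and $\wt{u}(x,0)=u(x,T)$. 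Moreover $\wt{u}$ has the required regularity $C^{2,1}\big(\O\times(0,T^{*}-T]\big)\cap C\big(\ol{\O}\times[0,T^{*}-T]\big)$ and, by the same time-translation, inherits the interface condition (\ref{interface bdry deri}); by the uniqueness statement of \cite{YZ16}, $\wt{u}$ is \emph{the} solution to the translated problem, and its lifespan is exactly $T^{*}-T$.

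Having identified $\wt{u}$ as a solution of the form (\ref{Prob}), I would apply Lemma \ref{Lemma, rep for soln, initial} to $\wt{u}$: for any $(x,t)\in\ol{\O}\times(0,T^{*}-T)$,
\[
\wt{u}(x,t)=\int_{\O}N(x,y,t)\,\wt{u}_{0}(y)\,dy+\int_{0}^{t}\int_{\Gamma_{1}}N(x,y,t-\tau)\,\wt{u}^{q}(y,\tau)\,dS(y)\,d\tau.
\]
Substituting back $\wt{u}(x,t)=u(x,T+t)$, $\wt{u}_{0}(y)=u(y,T)$ and $\wt{u}^{q}(y,\tau)=u^{q}(y,T+\tau)$ yields exactly (\ref{rep for soln}). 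The case $t\to 0^{+}$ / $T=T^{*}$ is excluded by the stated ranges, so no boundary-case discussion is needed.

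The only genuine point requiring care — and what I expect to be the main (though still routine) obstacle — is verifying that the time-translate $\wt{u}$ really does fall under the hypotheses for which Lemma \ref{Lemma, rep for soln, initial} was established, in particular that $\wt{u}_{0}=u(\cdot,T)$ is an admissible initial datum in the sense of (\ref{assumption on prob}). For $T>0$ this is immediate from the smoothness and strict positivity of $u$ at positive times; for $T=0$ it is just the original hypothesis. One should also note that the representation formula (\ref{rep for soln, initial}) was proved for the solution \emph{as constructed in} \cite{YZ16}, and the uniqueness statement there guarantees $\wt{u}$ coincides with that construction applied to the translated data, so Lemma \ref{Lemma, rep for soln, initial} is legitimately applicable. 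Once this identification is made, the corollary follows with no further computation.
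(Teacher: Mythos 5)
Your proposal is correct and is essentially the paper's own proof: the paper simply regards $u(\cdot,T)$ as the initial data and applies Lemma \ref{Lemma, rep for soln, initial} to the time-translated solution, exactly as you do. Your additional remarks verifying that the translate satisfies the hypotheses of (\ref{Prob}), inherits (\ref{interface bdry deri}), and coincides by uniqueness with the constructed solution are just a more careful spelling-out of the same one-line argument.
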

\begin{proof}
Regarding $u(\cdot,T)$ as the initial data $u_{0}(\cdot)$ and then applying Lemma \ref{Lemma, rep for soln, initial} leads to the conclusion. \end{proof}

\subsection{Boundary-Time Integral of the Heat Kernel of $\m{R}^{n}$}
\label{Subsec, bdry-time int est}
This section will provide the crucial estimates (\ref{bdry-time int est}) and (\ref{bdry-time int est, critical}) that will be used in the proofs of Lemma \ref{Lemma, growth rate, general} and Lemma \ref{Lemma, growth rate, critical}, while these two lemmas are the key ingredients in the proofs of Theorem \ref{Thm, lower bdd, general case} and Theorem \ref{Thm, lower bdd}. The three statements in this subsection have already essentially appeared in \cite{YZ16, YZ18}. But for completeness and preciseness, we still include their proofs here.

In the following, for any $\t{x}\in\m{R}^{n-1}$, we denote 
\be\label{ball in R^(n-1)}
B(\t{x},\rho)=\{\t{y}\in\m{R}^{n-1}: |\t{y}-\t{x}|<\rho\}\ee
to be the ball centered at $\t{x}$ in $\m{R}^{n-1}$ with radius $\rho$.

\begin{lemma}\label{Lemma, bdd for bdry-time int}
There exists $C=C(n,\O)$ such that for any $t>0$ and $x\in\ol{\O}$,
\[t^{1/2}\int_{\p\O}\Phi(x-y,t)\,dS(y)\leq C.\]
\end{lemma}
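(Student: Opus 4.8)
The plan is to reduce the boundary integral of the Euclidean heat kernel to a local computation near each boundary point, using the compactness and $C^2$-regularity of $\p\O$. First I would fix a boundary point $x_0$ and work in a coordinate system adapted to $\p\O$ near $x_0$: since $\p\O$ is $C^2$ and compact, there exist radii $r_0>0$ and a constant depending only on $n$ and $\O$ such that for every $z\in\p\O$, the portion $\p\O\cap B(z,r_0)$ is the graph $y_n = \gamma_z(\t y)$ of a $C^2$ function $\gamma_z$ over a ball $B(\t z, r_0)\subset\m R^{n-1}$ (in suitable rotated coordinates), with $\gamma_z(\t z)=0$, $\nabla\gamma_z(\t z)=0$, and a uniform bound $|\nabla\gamma_z|\le L$, so the surface measure element $\sqrt{1+|\nabla\gamma_z|^2}\,d\t y$ is comparable to $d\t y$ with constants depending only on $n,\O$.

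Next, for a given $x\in\ol\O$ and $t>0$, I would split $\int_{\p\O}\Phi(x-y,t)\,dS(y)$ into a bounded number of pieces subordinate to a finite cover of $\p\O$ by such graph patches (the number of patches depends only on $\O$). On each patch I estimate, using $|x-y|^2 \ge |\t x - \t y|^2$ when coordinates are centered appropriately (projecting $x$ onto the patch's tangent hyperplane), that
\[
\int_{\p\O\cap B(z,r_0)}\Phi(x-y,t)\,dS(y)
\;\le\; \frac{C}{(4\pi t)^{n/2}}\int_{\m R^{n-1}} \exp\!\Big(-\frac{|\t x-\t y|^2}{4t}\Big)\,d\t y
\;=\; \frac{C}{(4\pi t)^{n/2}}\,(4\pi t)^{(n-1)/2}
\;=\; \frac{C}{(4\pi t)^{1/2}},
\]
where $C=C(n,\O)$ absorbs the surface-measure distortion and the number of patches. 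For the part of $\p\O$ lying outside every small ball around the projection of $x$ one has $|x-y|\ge c_0>0$ uniformly, so that contribution is bounded by $C\,t^{-n/2} e^{-c_0^2/(4t)}\,|\p\O|$, and $t^{1/2}\cdot t^{-n/2} e^{-c_0^2/(4t)}$ is bounded for all $t>0$; this handles the case $t\ge 1$ (where even the trivial bound $\Phi(x-y,t)\le (4\pi t)^{-n/2}$ gives $t^{1/2}\int_{\p\O}\Phi\,dS \le (4\pi)^{-n/2} t^{(1-n)/2}|\p\O| \le C$) and is harmless for $t<1$. Multiplying the patch estimates by $t^{1/2}$ gives the claimed uniform bound.

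The main obstacle — really the only subtle point — is making the inequality $|x-y|^2 \gtrsim |\t x - \t y|^2$ uniform when $x$ is an interior point of $\O$ that may be far from $\p\O$, and ensuring the constant does not degenerate; this is handled by noting that if $\mathrm{dist}(x,\p\O)\ge r_0/2$ then $\Phi(x-y,t)\le C t^{-n/2} e^{-r_0^2/(16t)}$ and the estimate is immediate after multiplying by $t^{1/2}$, while if $\mathrm{dist}(x,\p\O)<r_0/2$ we may take $z$ to be a nearest boundary point, write $x = x^* + s\,\ora n(x^*)$ with $x^*\in\p\O$ and $0\le s< r_0/2$, and in the adapted coordinates at $x^*$ the $C^2$ graph bound $|\gamma_z(\t y)|\le L'|\t y|^2$ together with $s<r_0/2$ gives $|x-y|^2 \ge \tfrac12|\t x^*-\t y|^2$ for $|\t y|$ small, with a uniform constant; the far-away part of that same patch again has $|x-y|$ bounded below. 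Everything else is the routine Gaussian computation $\int_{\m R^{n-1}} e^{-|\t w|^2/(4t)}\,d\t w = (4\pi t)^{(n-1)/2}$ displayed above, and the uniformity of all constants in $n$ and $\O$ follows from compactness of $\p\O$.
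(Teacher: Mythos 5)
Your proposal is correct and follows essentially the same route as the paper: cover $\partial\Omega$ by finitely many $C^2$ graph patches, compare $dS$ with $d\tilde{y}$, drop the normal component of $|x-y|^2$ in the exponent, and evaluate the resulting $(n-1)$-dimensional Gaussian integral, which supplies exactly the factor $t^{(n-1)/2}$. The only remark is that your extra case distinctions (interior $x$ far from $\partial\Omega$, $t\ge 1$, the near/far split using the quadratic graph bound) are superfluous, since in each patch's rotated coordinates one has $|x-y|^2=|\tilde{x}-\tilde{y}|^2+|x_n-\varphi_i(\tilde{y})|^2\ge|\tilde{x}-\tilde{y}|^2$ with constant one for every $x\in\overline{\Omega}$ and every $t>0$, which is precisely how the paper argues.
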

\begin{proof}
Since $\p\O$ in this paper is assumed to be $C^{2}$, there exist finitely many balls $\wt{B}_{i}\triangleq B(\t{z}_{i},r_{i})\subset \m{R}^{n-1}$ and $C^{2}$ mappings $\varphi_{i}:\wt{B}_{i}\rightarrow \m{R}\,(1\leq i\leq K)$ such that 
\[\p\O=\bigcup_{i=1}^{K}\Big\{\big(\t{y},\varphi_{i}(\t{y})\big):\t{y}\in\wt{B}_{i}\Big\}. \]
The total number $K$ only depends on $\O$. As a result, by writing $x=(\t{x},x_{n})$ and parametrizing $\p\O$, then
\begin{eqnarray*}
t^{1/2}\int_{\p\O}\Phi(x-y,t)\,dS(y) &=& C\,t^{-\frac{n-1}{2}}\int_{\p\O}\exp\bigg(-\frac{|x-y|^{2}}{4t}\bigg)\,dS(y) \\
&\leq & C\,t^{-\frac{n-1}{2}}\sum_{i=1}^{K}\int_{\wt{B}_{i}} \exp\bigg(-\frac{|\t{x}-\t{y}|^{2}+|x_{n}-\varphi_{i}(\t{y})|^{2}}{4t}\bigg)\,d\t{y}\\
&\leq & C\,\sum_{i=1}^{K}t^{-\frac{n-1}{2}}\int_{\wt{B}_{i}} \exp\bigg(-\frac{|\t{x}-\t{y}|^{2}}{4t}\bigg)\,d\t{y}\\
&\leq & C\,\sum_{i=1}^{K}t^{-\frac{n-1}{2}}\int_{\m{R}^{n-1}} \exp\bigg(-\frac{|\t{z}|^{2}}{4t}\bigg)\,d\t{z}\\
&=& CK,
\end{eqnarray*}
where the last equality is because $t^{-\frac{n-1}{2}}\int_{\m{R}^{n-1}} \exp\big(-\frac{|\t{z}|^{2}}{4t}\big)\,d\t{z}$ is a universal constant.
\end{proof}

\begin{lemma}\label{Lemma, bdry-time int, general}
For any $\alpha\in[0,\frac{1}{n-1})$, there exists $C=C(n,\O,\alpha)$ such that for any $x\in\ol{\O}$ and $t\geq 0$,
\be\label{bdry-time int est}
\int_{0}^{t}\int_{\Gamma_1}\Phi(x-y,\tau)\,dS(y)\,d\tau\leq
C\,|\Gamma_{1}|^{\alpha}\,t^{[1-(n-1)\alpha]/2}.\ee
\end{lemma}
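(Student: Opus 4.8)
The plan is to interpolate between two elementary bounds on the spatial integral
$I(\tau)\triangleq\int_{\Gamma_1}\Phi(x-y,\tau)\,dS(y)$ and then integrate in $\tau$, the whole point being that the hypothesis $\alpha<\tfrac{1}{n-1}$ is exactly what makes the resulting $\tau$-integral converge at $\tau=0$. If $t=0$ the claim is trivial ($0\leq 0$), and if $\alpha=0$ it is just Lemma \ref{Lemma, bdd for bdry-time int} integrated in $\tau$; so we may assume $t>0$ and $\alpha\in(0,\tfrac{1}{n-1})$.

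\textbf{Step 1 (two bounds on $I(\tau)$).} Since $\Gamma_1\subseteq\p\O$, Lemma \ref{Lemma, bdd for bdry-time int} gives a constant $C_1=C_1(n,\O)$ with $I(\tau)\leq C_1\,\tau^{-1/2}$ for all $\tau>0$ and all $x\in\ol{\O}$. On the other hand, the crude pointwise bound $\Phi(x-y,\tau)\leq(4\pi\tau)^{-n/2}$ yields $I(\tau)\leq(4\pi)^{-n/2}\,|\Gamma_1|\,\tau^{-n/2}$.

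\textbf{Step 2 (interpolation).} Writing $I(\tau)=I(\tau)^{1-\alpha}I(\tau)^{\alpha}$ and inserting the first bound for the factor $I(\tau)^{1-\alpha}$ and the second for $I(\tau)^{\alpha}$ (legitimate since $\alpha\in[0,1]$), we get a constant $C_2=C_2(n,\O,\alpha)$ such that
\[
I(\tau)\ \leq\ C_2\,|\Gamma_1|^{\alpha}\,\tau^{-\frac{1-\alpha}{2}-\frac{n\alpha}{2}}\ =\ C_2\,|\Gamma_1|^{\alpha}\,\tau^{-\frac{1+(n-1)\alpha}{2}}.
\]

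\textbf{Step 3 (time integration).} The exponent satisfies $\tfrac{1+(n-1)\alpha}{2}<1$ precisely when $(n-1)\alpha<1$, i.e.\ under the hypothesis $\alpha<\tfrac{1}{n-1}$. Hence $\int_0^t\tau^{-\frac{1+(n-1)\alpha}{2}}\,d\tau$ converges and equals $\frac{2}{1-(n-1)\alpha}\,t^{\frac{1-(n-1)\alpha}{2}}$. Multiplying by $C_2|\Gamma_1|^\alpha$ gives
\[
\int_0^t\int_{\Gamma_1}\Phi(x-y,\tau)\,dS(y)\,d\tau\ \leq\ C\,|\Gamma_1|^{\alpha}\,t^{\frac{1-(n-1)\alpha}{2}},
\]
with $C=C(n,\O,\alpha)$, which is (\ref{bdry-time int est}).

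I do not expect a genuine obstacle here: the only thing to monitor is that the interpolation exponent on $\tau$ comes out strictly less than $1$, and this is automatic from the constraint $\alpha\in[0,\tfrac{1}{n-1})$ — indeed this is the reason the lemma is stated on that half-open interval. As an alternative to the geometric-mean step, one can split the $\tau$-integral at $\tau_0=|\Gamma_1|^{2/(n-1)}$, using $I(\tau)\leq C_1\tau^{-1/2}$ on $(\tau_0,t)$ and $I(\tau)\leq C|\Gamma_1|\tau^{-n/2}$ on $(0,\tau_0)$; the two contributions are each $\lesssim|\Gamma_1|^{\alpha}t^{[1-(n-1)\alpha]/2}$ (the second after checking $n/2<1$ fails, so one instead integrates and uses $\tau_0$'s definition), leading to the same bound.
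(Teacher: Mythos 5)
Your main argument is correct and arrives at exactly the same intermediate estimate as the paper, namely $\int_{\Gamma_1}\Phi(x-y,\tau)\,dS(y)\le C\,|\Gamma_1|^{\alpha}\,\tau^{-[1+(n-1)\alpha]/2}$ uniformly in $x\in\ol{\O}$, followed by the same time integration, where the hypothesis $\alpha<\frac{1}{n-1}$ guarantees integrability at $\tau=0$. The only genuine difference is how the factor $|\Gamma_1|^{\alpha}$ is extracted: the paper applies H\"older's inequality on $\Gamma_1$ with exponents $1/\alpha$ and $1/(1-\alpha)$, then uses the rescaling identity $[\Phi(z,\tau)]^{1/(1-\alpha)}=c\,\tau^{-n\alpha/(2(1-\alpha))}\Phi\big(z,(1-\alpha)\tau\big)$ before invoking Lemma \ref{Lemma, bdd for bdry-time int}; you instead take the geometric mean $I(\tau)=I(\tau)^{1-\alpha}I(\tau)^{\alpha}$ of the two cruder bounds $I(\tau)\le C\tau^{-1/2}$ (Lemma \ref{Lemma, bdd for bdry-time int}) and $I(\tau)\le(4\pi\tau)^{-n/2}|\Gamma_1|$. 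Your route is slightly more elementary (no H\"older, no Gaussian rescaling) and loses nothing here, since both proofs discard the $x$-dependence of the Gaussian at the same stage. One caveat: the alternative sketched in your final sentence, splitting the $\tau$-integral at $\tau_0=|\Gamma_1|^{2/(n-1)}$ and using $C|\Gamma_1|\tau^{-n/2}$ on $(0,\tau_0)$, does not work as written because $\int_0^{\tau_0}\tau^{-n/2}\,d\tau$ diverges for $n\ge 2$; the correct splitting uses the $\tau^{-1/2}$ bound near $\tau=0$, which is essentially the argument for the critical case in Lemma \ref{Lemma, bdry-time int, critical}. Since that aside is not part of your proof, the main argument stands as complete and correct.
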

\begin{proof}
By Holder's inequality, 
\begin{eqnarray}
\int_{\Gamma_{1}}\Phi(x-y,\tau)\,dS(y) &\leq & |\Gamma_{1}|^{\alpha}\,\bigg(\int_{\Gamma_{1}}\big[\Phi(x-y,\tau)\big]^{\frac{1}{1-\alpha}}\,dS(y)\bigg)^{1-\alpha} \notag\\
&=& C\,|\Gamma_{1}|^{\alpha}\,\bigg(\int_{\Gamma_{1}}\tau^{-\frac{n\alpha}{2(1-\alpha)}}\Phi\big(x-y,(1-\alpha)\tau\big)\,dS(y)\bigg)^{1-\alpha} \notag\\
&=& C\,|\Gamma_{1}|^{\alpha}\,\tau^{-n\alpha/2}\bigg(\int_{\Gamma_{1}}\Phi\big(x-y,(1-\alpha)\tau\big)\,dS(y)\bigg)^{1-\alpha} \label{Holder for bdry int}.
\end{eqnarray}
By applying Lemma \ref{Lemma, bdd for bdry-time int}, 
\begin{eqnarray}
\int_{\Gamma_{1}}\Phi\big(x-y,(1-\alpha)\tau\big)\,dS(y) &\leq & \int_{\p\O}\Phi\big(x-y,(1-\alpha)\tau\big)\,dS(y) \notag\\
&\leq & C\,\tau^{-1/2} \label{bdry int bdd}.
\end{eqnarray}
Plugging (\ref{bdry int bdd}) into (\ref{Holder for bdry int}), 
\begin{eqnarray*}
\int_{\Gamma_{1}}\Phi(x-y,\tau)\,dS(y)\leq C\,|\Gamma_{1}|^{\alpha}\,\tau^{-[1+(n-1)\alpha]/2}.
\end{eqnarray*}
Now integrating $\tau$ from 0 to $t$ yields (\ref{bdry-time int est}).
\end{proof}

In the above lemma, $\alpha$ is strictly less than $\frac{1}{n-1}$, so it is natural to ask what will happen when $\alpha=\frac{1}{n-1}$? If we simply plug $\alpha=\frac{1}{n-1}$ into (\ref{bdry-time int est}), then it leads to 
\be\label{bdry-time int, critical, guess}\int_{0}^{t}\int_{\Gamma_1}\Phi(x-y,\tau)\,dS(y)\,d\tau\leq
C\,|\Gamma_{1}|^{\frac{1}{n-1}}\ee
whose right hand side is an upper bound independent of $t$. However, (\ref{bdry-time int, critical, guess}) is not true in general. For example, in the simplest case when $\Gamma_{1}$ is flat and ball-shaped in a hypersurface and $x\in\Gamma_{1}$ (see Figure \ref{Fig, flat broken part}), then (\ref{bdry-time int, critical, guess}) fails when either $t\rightarrow\infty$ or $n=2$ and $|\Gamma_{1}|\rightarrow 0^{+}$.
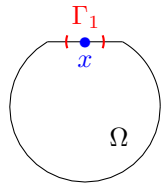
\begin{figure}[htbp]
\centering 
\begin{tikzpicture}[scale=1]
\draw [domain=0:60] plot ({cos(\x)}, {sin(\x)});
\draw [domain=120:360] plot ({cos(\x)}, {sin(\x)});
\draw (0.2,-0.4) node[right] {$\O$}; 
\draw [domain=-0.5:0.5] plot ({\x}, {0.866});
\draw [red, thick, domain=-20:20] plot ({0.25*cos(\x)}, {0.866+0.25*sin(\x)});
\draw [red, thick, domain=160:200] plot ({0.25*cos(\x)}, {0.866+0.25*sin(\x)});
\draw [red] (-0.3,1.2) node[right] {$\Gamma_{1}$}; 
\draw [blue](0,0.85) node {$\bullet$};
\draw [blue](0,0.8) node[below] {$x$};
\end{tikzpicture}
\caption{Flat $\Gamma_{1}$}
\label{Fig, flat broken part}
\end{figure}

\begin{itemize}
\item First, when $t\rightarrow \infty$, by parametrizing $\Gamma_{1}$ as what we will do in the proof for Proposition \ref{Prop, lower bdd for bdry-time int}, it follows from Lemma \ref{Lemma, radial bdry-time int, large dim} or Lemma \ref{Lemma, radial bdry-time int, dim 2} that the left hand side in (\ref{bdry-time int, critical, guess}) tends to infinity. 

\item Secondly, when $n=2$ and $|\Gamma_{1}|\rightarrow 0^{+}$, it follows similarly from Lemma \ref{Lemma, radial bdry-time int, dim 2} that 
\[\int_{0}^{t}\int_{\Gamma_1}\Phi(x-y,\tau)\,dS(y)\,d\tau\geq C|\Gamma_{1}|\bigg[1+\ln\bigg(\frac{t}{|\Gamma_{1}|^{2}}\bigg)\bigg],\]
which can not be bounded by $C|\Gamma_{1}|$.
\end{itemize}  
Consequently, we can expect (\ref{bdry-time int, critical, guess}) to hold only if $t$ is bounded and the right hand side of (\ref{bdry-time int, critical, guess}) adds an extra logarithmic term of $|\Gamma_{1}|^{-1}$ when $n=2$, see the following lemma.

\begin{lemma}\label{Lemma, bdry-time int, critical}
There exists $C=C(n,\O)$ such that for any $x\in\ol{\O}$, 
\be\label{bdry-time int est, critical}
\int_{0}^{2}\int_{\Gamma_1}\Phi(x-y,\tau)\,dS(y)\,d\tau\leq \left\{\begin{array}{ll}
C\,|\Gamma_{1}|^{\frac{1}{n-1}}, & \text{if}\quad n\geq 3, \vspace{0.1in}\\
C\,|\Gamma_{1}|\ln\Big(1+\dfrac{1}{|\Gamma_{1}|}\Big), & \text{if}\quad n=2.
\end{array}\right.\ee
\end{lemma}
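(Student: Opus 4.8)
The plan is to localize the surface integral using the $C^2$ structure of $\p\O$, reduce to a flat model in $\m{R}^{n-1}$, and then carry out the radial computation by hand, treating $n\ge 3$ and $n=2$ separately. As in the proof of Lemma \ref{Lemma, bdd for bdry-time int}, I would cover $\p\O$ by finitely many coordinate patches $\wt{B}_i=B(\t z_i,r_i)\subset\m{R}^{n-1}$ with $C^2$ graph maps $\varphi_i$, the number $K$ depending only on $\O$. Fixing $x=(\t x,x_n)\in\ol\O$ and writing $y=(\t y,\varphi_i(\t y))$, the inequality $|x-y|^2\ge|\t x-\t y|^2$ lets me bound each patch contribution by $\int_0^2\!\!\int_{\wt B_i\cap\,\pi(\Gamma_1)}\Phi\big((\t x-\t y,0),\tau\big)\,d\t y\,d\tau$, where $\pi$ is the projection; since the graph maps are Lipschitz with constant depending only on $\O$, the projected set $\pi(\Gamma_1\cap\text{patch }i)$ has $(n-1)$-measure comparable to a piece of $|\Gamma_1|$. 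Thus everything reduces to estimating, for $\rho^{n-1}\asymp|\Gamma_1|$ and a set $E\subset\m{R}^{n-1}$ of measure $\le C|\Gamma_1|$,
\be\label{reduced flat integral}
\int_0^2\int_{E}\frac{1}{(4\pi\tau)^{(n-1)/2}}\exp\!\Big(-\frac{|\t x-\t y|^2}{4\tau}\Big)\,d\t y\,d\tau .
\ee

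To bound \eqref{reduced flat integral} I would use the standard rearrangement-type fact that among all sets $E$ of a fixed measure, the one maximizing $\int_E \Phi(\t x-\t y,\tau)\,d\t y$ (for each fixed $\tau$, hence after integrating in $\tau$) is the ball $B(\t x,\rho)$ with $|B(\t x,\rho)|=|E|$, so it suffices to estimate
\be\label{radial model}
\int_0^2\int_{B(\t x,\rho)}\frac{1}{(4\pi\tau)^{(n-1)/2}}\exp\!\Big(-\frac{|\t x-\t y|^2}{4\tau}\Big)\,d\t y\,d\tau,\qquad \rho\asymp|\Gamma_1|^{1/(n-1)} .
\ee
Switching to polar coordinates in $\t y$ and substituting $r=\sqrt{\tau}\,s$ turns the inner integral over $B(\t x,\rho)$ into $\int_0^{\rho/\sqrt\tau} c_n s^{n-2}e^{-s^2/4}\,ds$, which is bounded by $\min\{1,\,(\rho/\sqrt\tau)^{n-1}\}$ up to a constant. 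Integrating this in $\tau$ over $(0,2)$: for $n\ge 3$ the bound $\int_0^{\rho^2} C\,d\tau+\int_{\rho^2}^2 C\rho^{n-1}\tau^{-(n-1)/2}\,d\tau$ gives $C\rho^2+C\rho^{n-1}$ (the second integral converges at the lower limit precisely because $(n-1)/2\ge1$, and when $n=3$ it contributes a harmless $\rho^2\ln(2/\rho^2)$ that is absorbed — I should double-check the $n=3$ borderline, but the claimed power is $\rho^2=|\Gamma_1|^{2/(n-1)}$... ). Here I realize the target in \eqref{bdry-time int est, critical} for $n\ge3$ is $|\Gamma_1|^{1/(n-1)}=\rho$, not $\rho^2$; since $\rho=|\Gamma_1|^{1/(n-1)}\le C$ is bounded, $\rho^2\le C\rho$, so the cruder bound $C\rho$ certainly holds and is what I will state. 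For $n=2$ the inner factor is $\min\{1,\rho/\sqrt\tau\}$ and $\int_0^2\min\{1,\rho/\sqrt\tau\}\,d\tau=\int_0^{\rho^2}d\tau+\int_{\rho^2}^2\rho\tau^{-1/2}\,d\tau=\rho^2+2\rho(\sqrt2-\rho)\le C\rho$, which when $\rho\asymp|\Gamma_1|$...

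Wait — that gives $C|\Gamma_1|$, strictly better than the claimed $C|\Gamma_1|\ln(1+|\Gamma_1|^{-1})$, so the estimate is consistent (the logarithm is only genuinely needed when $t\to\infty$, as the authors note). Hence for $n=2$ I will simply produce the clean bound $C|\Gamma_1|\le C|\Gamma_1|\ln(1+|\Gamma_1|^{-1})$. Summing over the $K$ patches multiplies everything by a constant depending only on $n,\O$, completing the proof.

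\textbf{Main obstacle.} The delicate point is the passage from the genuine surface $\Gamma_1\subset\p\O$ to the flat radial model: I must control how the $C^2$ graph parametrization distorts $(n-1)$-dimensional measure (to guarantee $\rho^{n-1}\asymp|\Gamma_1|$ with constants depending only on $\O$, uniformly over all shapes and positions of $\Gamma_1$ within a patch), and I must justify the rearrangement step — namely that replacing the projected set by a ball of equal measure only increases the heat integral. Both are standard, but getting the constants to depend on $n,\O$ alone (and not on $\Gamma_1$ or its location) requires care with the uniform bound on $|D\varphi_i|$ over the finite atlas. A secondary nuisance is the $n=3$ borderline in the $\tau$-integration, where the naive power count produces a logarithm; this is harmless since one only needs the weaker bound $C|\Gamma_1|^{1/(n-1)}$ (using $|\Gamma_1|\le C$), but it should be mentioned rather than glossed over.
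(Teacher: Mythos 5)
Your overall strategy (localize via a finite $C^{2}$ atlas, project to $\m{R}^{n-1}$, rearrange the projected set into a ball, and do the radial computation) is sound and is essentially a from-scratch version of the argument the paper delegates to Lemmas 2.7 and 2.10 of \cite{YZ18} (the paper itself only adds the trivial bound of $\int_{1}^{2}$ for $n=2$). However, there is a genuine error in your reduction: after correctly bounding each patch contribution by $\int_{0}^{2}\int\Phi\big((\t x-\t y,0),\tau\big)\,d\t y\,d\tau$, which carries the $n$-dimensional normalization $(4\pi\tau)^{-n/2}$, you silently replace the prefactor by $(4\pi\tau)^{-(n-1)/2}$ in your ``reduced flat integral.'' This drops a factor $\tau^{-1/2}$, and the drop is not harmless: with the correct prefactor the inner integral over $B(\t x,\rho)$ is comparable to $\tau^{-1/2}\min\{1,(\rho/\sqrt{\tau})^{n-1}\}$, and the $\tau$-integration over $(0,2)$ then yields $C\rho$ for $n\geq 3$ (with no borderline logarithm at $n=3$, since the tail integral is $\rho^{2}\int_{\rho^{2}}^{2}\tau^{-3/2}\,d\tau\asymp\rho$) and $C\rho+C\rho\ln(2/\rho^{2})\asymp C|\Gamma_{1}|\ln\big(1+|\Gamma_{1}|^{-1}\big)$ for $n=2$. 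Your version instead produces the ``better'' bounds $C\rho^{2}$ ($n\geq3$) and $C|\Gamma_{1}|$ without a logarithm ($n=2$); the latter is in fact false, as Proposition \ref{Prop, lower bdd for bdry-time int} of this paper shows that for flat ball-shaped $\Gamma_{1}$ and $x\in\ol{\Gamma}_{1}$ the integral is bounded \emph{below} by $C|\Gamma_{1}|\ln\big(1+|\Gamma_{1}|^{-1}\big)$ when $n=2$. So your remark that the logarithm is only needed as $t\to\infty$ is incorrect: it is already forced on the finite time interval $(0,2)$ as $|\Gamma_{1}|\to 0^{+}$, precisely because of the $\tau^{-1/2}$ you lost.

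The fix is mechanical and does not disturb the rest of your plan: keep the prefactor $(4\pi\tau)^{-n/2}$ (equivalently, note as in Lemma \ref{Lemma, bdd for bdry-time int} that the full boundary integral of $\Phi$ is only $O(\tau^{-1/2})$, not $O(1)$), redo the two-regime $\tau$-integration, and the claimed bounds of the lemma come out exactly, with the logarithm genuinely needed for $n=2$. The localization, the measure comparison under the Lipschitz graph maps, and the bathtub/rearrangement step are all fine as you describe them, with constants depending only on $n$ and $\O$ through the finite atlas.
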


\begin{proof}
When $n\geq 3$, we refer the readers to Lemma 2.7 in \cite{YZ18}. When $n=2$, it has been shown in Lemma 2.10 in \cite{YZ18} that 
\[\int_{0}^{1}\int_{\Gamma_1}\Phi(x-y,\tau)\,dS(y)\,d\tau\leq 
C\,|\Gamma_{1}|\ln\Big(1+\dfrac{1}{|\Gamma_{1}|}\Big).\]
So we only need to estimate $\int_{1}^{2}\int_{\Gamma_1}\Phi(x-y,\tau)\,dS(y)\,d\tau$. For $\tau\geq 1$, 
\[\Phi(x-y,\tau)\leq \frac{1}{(4\pi\tau)^{n/2}}\leq C.\]
Therefore, 
\[\int_{1}^{2}\int_{\Gamma_1}\Phi(x-y,\tau)\,dS(y)\,d\tau\leq C|\Gamma_{1}|\leq C|\Gamma_{1}|\,\ln\Big(1+\dfrac{1}{|\Gamma_{1}|}\Big),\]
where the last inequality is due to 
\[\ln\Big(1+\dfrac{1}{|\Gamma_{1}|}\Big)\geq \ln\Big(1+\dfrac{1}{|\p\O|}\Big).\]
\end{proof}
The order of the right hand side in (\ref{bdry-time int est, critical}) on $|\Gamma_{1}|$ is optimal as $|\Gamma_{1}|\rightarrow 0^{+}$, see Proposition \ref{Prop, lower bdd for bdry-time int} in Section \ref{Sec, sharp}.

\section{Proof of Theorem \ref{Thm, lower bdd, general case}}
\label{Sec, proof for general lower bdd}
In order to derive a lower bound for $T^{*}$, it is important to investigate how fast the solution $u$ can grow.

\begin{lemma}\label{Lemma, growth rate, general}
Let $u$ be the solution to (\ref{Prob}). Define $M(t)$ as in (\ref{max function at time t}). For any $\alpha\in[0,\frac{1}{n-1})$, there exists $C=C(n,\O,\alpha)$ such that for any $T\geq 0$ and $0\leq t<\min\{1,T^{*}-T\}$,  
\be\label{growth rate, general}
\frac{M(T+t)-M(T)}{M^{q}(T+t)}\leq C\,|\Gamma_{1}|^{\alpha}\,t^{[1-(n-1)\alpha]/2}.\ee
\end{lemma}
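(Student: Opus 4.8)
The plan is to start from the representation formula in Corollary \ref{Cor, rep for soln} and estimate each term over the time window $[T,T+t]$. Fix $T\geq 0$ and $0\leq t<\min\{1,T^{*}-T\}$, and pick any point $(x,s)\in\ol{\O}\times[0,t]$ at which we want to bound $u(x,T+s)$. Applying (\ref{rep for soln}) with $t$ replaced by $s\le t$, we get
\[
u(x,T+s)=\int_{\O}N(x,y,s)\,u(y,T)\,dy+\int_{0}^{s}\int_{\Gamma_{1}}N(x,y,s-\tau)\,u^{q}(y,T+\tau)\,dS(y)\,d\tau.
\]
For the first term, by part (d)--(e) of Corollary \ref{Cor, prop of NHK} (nonnegativity and the fact that $N$ integrates to $1$ in $y$), we have $\int_{\O}N(x,y,s)u(y,T)\,dy\le M(T)$. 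For the second term, since $0<s\le t\le 1$ we may invoke Lemma \ref{Lemma, quant of NHK} to replace $N(x,y,s-\tau)$ by $C\,\Phi\big(x-y,2(s-\tau)\big)$, and bound $u^{q}(y,T+\tau)\le M^{q}(T+t)$ uniformly. This reduces the double integral to $C\,M^{q}(T+t)\int_{0}^{s}\int_{\Gamma_{1}}\Phi\big(x-y,2(s-\tau)\big)\,dS(y)\,d\tau$.

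The next step is to change variables $\sigma=2(s-\tau)$ in the time integral, turning it into $\tfrac12\int_{0}^{2s}\int_{\Gamma_{1}}\Phi(x-y,\sigma)\,dS(y)\,d\sigma$, and then apply Lemma \ref{Lemma, bdry-time int, general} with the given $\alpha\in[0,\tfrac{1}{n-1})$: since $2s\le 2t$, this is at most $C\,|\Gamma_{1}|^{\alpha}\,(2t)^{[1-(n-1)\alpha]/2}\le C\,|\Gamma_{1}|^{\alpha}\,t^{[1-(n-1)\alpha]/2}$, absorbing the factor $2^{[1-(n-1)\alpha]/2}$ into the constant (which is allowed since $\alpha$ ranges over a fixed interval and the exponent stays bounded). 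Combining the two bounds gives, for every $(x,s)\in\ol{\O}\times[0,t]$,
\[
u(x,T+s)\le M(T)+C\,|\Gamma_{1}|^{\alpha}\,t^{[1-(n-1)\alpha]/2}\,M^{q}(T+t).
\]
Taking the supremum over $(x,s)\in\ol{\O}\times[0,t]$ on the left — and noting that $\sup_{(x,s)\in\ol{\O}\times[0,t]}u(x,T+s)=\sup_{(x,\tau)\in\ol{\O}\times[0,T+t]}u(x,\tau)=M(T+t)$ because $M$ is nondecreasing in its argument — yields $M(T+t)\le M(T)+C\,|\Gamma_{1}|^{\alpha}\,t^{[1-(n-1)\alpha]/2}\,M^{q}(T+t)$. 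Rearranging and dividing by $M^{q}(T+t)$ (which is positive since $u>0$ for positive times, hence $M(T+t)>0$) produces exactly (\ref{growth rate, general}).

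The main subtlety — not so much an obstacle as the point that needs care — is the legitimacy of using the representation formula (\ref{rep for soln}) and Lemma \ref{Lemma, quant of NHK} right up to $t=0$: Lemma \ref{Lemma, rep for soln, initial} is asserted for $(x,t)\in\ol{\O}\times(0,T^{*})$ (and its proof is deferred to the appendix), and one must make sure the estimate degenerates gracefully as $s\to 0^{+}$, where the time integral vanishes and the inequality becomes $M(T)\le M(T)$; the $t=0$ case of (\ref{growth rate, general}) is then trivial. A second minor point is uniformity of the constant in $\alpha$: Lemma \ref{Lemma, bdry-time int, general} already carries an $\alpha$-dependence, so the $C$ in the conclusion legitimately depends on $(n,\O,\alpha)$, consistent with the statement. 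Everything else is a routine chain of the already-established estimates.
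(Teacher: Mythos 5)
Your proof is correct and follows essentially the same route as the paper's: the Neumann heat kernel representation formula (\ref{rep for soln}), nonnegativity and the mass-one property from Corollary \ref{Cor, prop of NHK}, the Gaussian bound of Lemma \ref{Lemma, quant of NHK} (valid since the time arguments stay below $1$), and then Lemma \ref{Lemma, bdry-time int, general}. The only (harmless) imprecision is the claim $\sup_{(x,s)\in\ol{\O}\times[0,t]}u(x,T+s)=M(T+t)$ --- in general this supremum could be smaller, but then $M(T+t)=M(T)$ and (\ref{growth rate, general}) is trivial, and the paper closes the same gap by simply combining the trivial bound $u(x,\sigma)\le M(T)$ for $\sigma\in[0,T]$ with the estimate on $(T,T+t]$.
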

\begin{proof}
It is equivalent to prove 
\be\label{equiv for growth}
M(T+t)\leq M(T)+CM^{q}(T+t)\,|\Gamma_{1}|^{\alpha}\,t^{[1-(n-1)\alpha]/2}.\ee
For any $\sigma\in[0,T]$ and $x\in\ol{\O}$, it follows from the definition of $M(t)$ that 
\be\label{bdd for small time} u(x,\sigma)\leq M(T).\ee

In the rest of the proof, we assume $\sigma\in(T,T+t]$ and $x\in\ol{\O}$. By the representation formula (\ref{rep for soln}) with $t=\sigma-T$,
\begin{eqnarray*}
u(x,\sigma) &=& \int_{\O}N(x,y,\sigma-T)u(y,T)\,dy+\int_{0}^{\sigma-T}\int_{\Gamma_{1}}N(x,y,\sigma-T-\tau)u^{q}(y,T+\tau)\,dS(y)\,d\tau \\
&\leq & M(T)\int_{\O}N(x,y,\sigma-T)\,dy+M^{q}(\sigma)\int_{0}^{\sigma-T}\int_{\Gamma_{1}}N(x,y,\sigma-T-\tau)\,dS(y)\,d\tau.
\end{eqnarray*}
Applying part (e) in Corollary \ref{Cor, prop of NHK} and a change of variable in $\tau$, we get
\[u(x,\sigma)\leq M(T)+M^{q}(\sigma)\int_{0}^{\sigma-T}\int_{\Gamma_{1}}N(x,y,\tau)\,dS(y)\,d\tau.\]
Combining the nonnegativity of $N(x,y,\tau)$ in (\ref{quant of NHK}) and the fact that $\sigma\leq T+t$, we obtain  
\be\label{bdd by int of Green's fn} u(x,\sigma) \leq M(T)+M^{q}(T+t)\int_{0}^{t}\int_{\Gamma_{1}}N(x,y,\tau)\,dS(y)\,d\tau.\ee
Since $t\leq 1$, it follows from Lemma \ref{Lemma, quant of NHK} that
\begin{eqnarray*}
\int_{0}^{t}\int_{\Gamma_{1}}N(x,y,\tau)\,dS(y)\,d\tau &\leq & C \int_{0}^{t}\int_{\Gamma_{1}}\Phi(x-y,2\tau)\,dS(y)\,d\tau\\
&=& C \int_{0}^{2t}\int_{\Gamma_{1}}\Phi(x-y,\tau)\,dS(y)\,d\tau. \end{eqnarray*} 
Combining the above inequality with Lemma \ref{Lemma, bdry-time int, general}, we get
\be\label{key est}
\int_{0}^{t}\int_{\Gamma_{1}}N(x,y,\tau)\,dS(y)\,d\tau\leq C\,|\Gamma_{1}|^{\alpha}\,t^{[1-(n-1)\alpha]/2}.\ee
Plugging the above inequality into (\ref{bdd by int of Green's fn}), 
\be\label{bdd for large time}
u(x,\sigma)\leq M(T)+CM^{q}(T+t)\,|\Gamma_{1}|^{\alpha}\,t^{[1-(n-1)\alpha]/2}.\ee
Finally, (\ref{bdd for small time}) and (\ref{bdd for large time}) together lead to (\ref{equiv for growth}).
\end{proof}

In order to elaborate the proof of Theorem \ref{Thm, lower bdd, general case} more clearly, we give an elementary result as below.

\begin{lemma}\label{Lemma, lower bdd for a series}
Let $A>0$ and $0<\lambda<1$ be two constants. Then 
\be\label{lower bdd for a series}
\sum_{k=1}^{\infty}\min\{1,\lambda^{k}A\}\geq \frac{\ln(1+\lambda A)}{2\ln(\lambda^{-1})}.\ee
\end{lemma}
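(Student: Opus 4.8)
\textbf{Proof proposal for Lemma \ref{Lemma, lower bdd for a series}.}

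The plan is to split the series $\sum_{k=1}^{\infty}\min\{1,\lambda^{k}A\}$ at the index where the two competing quantities cross, namely where $\lambda^{k}A\approx 1$. Let me set $k_{0}$ to be the smallest positive integer with $\lambda^{k_{0}}A\leq 1$; equivalently $k_{0}=\lceil \log_{\lambda^{-1}}A\rceil$ when $A>1$, and $k_{0}=1$ when $A\leq 1$. For $1\leq k<k_{0}$ we have $\min\{1,\lambda^{k}A\}=1$, so these terms alone contribute $k_{0}-1$ to the sum. For $k\geq k_{0}$ we have $\min\{1,\lambda^{k}A\}=\lambda^{k}A$, and summing the geometric tail gives $\sum_{k\geq k_{0}}\lambda^{k}A=\lambda^{k_{0}}A\cdot\frac{1}{1-\lambda}\geq \lambda^{k_{0}}A$. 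Thus the whole series is at least $(k_{0}-1)+\lambda^{k_{0}}A$.

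Next I would turn this into the claimed logarithmic bound. Taking logarithms base $\lambda^{-1}$, the defining property of $k_{0}$ gives $k_{0}\geq \log_{\lambda^{-1}}A$, i.e. $k_{0}\ln(\lambda^{-1})\geq \ln A$ (this is the relevant direction once $A$ is large; the case $A\le1$, where $k_0=1$, will be handled separately or absorbed). So $k_{0}-1\geq \frac{\ln A}{\ln(\lambda^{-1})}-1$. The cleanest route, however, is probably to bound each term of the series from below by a continuous comparison: since $\min\{1,\lambda^{k}A\}$ is decreasing in $k$, one has $\sum_{k=1}^{\infty}\min\{1,\lambda^{k}A\}\geq \int_{1}^{\infty}\min\{1,\lambda^{s}A\}\,ds$, and this integral can be computed exactly — it equals $k_{0}(s)-1+\frac{1}{\ln(\lambda^{-1})}$ type expression where the crossover is now at the real number $s^{*}=\log_{\lambda^{-1}}A$. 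Explicitly, $\int_{1}^{\infty}\min\{1,\lambda^{s}A\}\,ds = (s^{*}-1)^{+}+\int_{\max\{1,s^{*}\}}^{\infty}\lambda^{s}A\,ds$, and the second integral is $\lambda^{\max\{1,s^{*}\}}A/\ln(\lambda^{-1})$, which is $1/\ln(\lambda^{-1})$ when $s^{*}\geq 1$. This yields a lower bound of the form $\frac{\ln A}{\ln(\lambda^{-1})}-1+\frac{1}{\ln(\lambda^{-1})}$ in the large-$A$ regime, and I would then check that $\frac{\ln(1+\lambda A)}{2\ln(\lambda^{-1})}$ is dominated by this.

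The main obstacle, and the reason for the factor $2$ and the $1+\lambda A$ (rather than $A$) in the statement, is the small-$A$ regime: when $A$ is small the left side is comparable to $\frac{\lambda A}{1-\lambda}$ while $\ln(1+\lambda A)\approx \lambda A$, so one needs $\frac{1}{1-\lambda}\geq \frac{1}{2\ln(\lambda^{-1})}$, which holds because $\ln(\lambda^{-1})\geq 1-\lambda$ for $\lambda\in(0,1)$ (an elementary convexity inequality, equivalently $-\ln\lambda\geq 1-\lambda$). So the careful bookkeeping will be: (i) for $\lambda A\leq 1$, just use the single term $k=1$, giving $\min\{1,\lambda A\}=\lambda A\geq \frac{\ln(1+\lambda A)}{1}\cdot\frac{\lambda A}{\ln(1+\lambda A)}$ — actually simpler, use $\lambda A\geq \ln(1+\lambda A)\geq \frac{\ln(1+\lambda A)}{2\ln(\lambda^{-1})}$ provided $2\ln(\lambda^{-1})\geq 1$; when $2\ln(\lambda^{-1})<1$ instead use the geometric tail $\sum_{k}\lambda^{k}A=\frac{\lambda A}{1-\lambda}$ together with $\frac{1}{1-\lambda}\geq\frac{1}{2\ln(\lambda^{-1})}$ as noted; (ii) for $\lambda A>1$, use the integral/crossover computation above and verify $\ln A+1\geq \frac{1}{2}\ln(1+\lambda A)$, which is easy since $1+\lambda A< 2A$ hence $\ln(1+\lambda A)<\ln 2+\ln A\le \ln A + 1 \le 2(\ln A+1)$ crudely. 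Assembling these two cases gives \eqref{lower bdd for a series}. I expect the write-up to be short once the crossover index and the inequality $-\ln\lambda\geq 1-\lambda$ are in hand.
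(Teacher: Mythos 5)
Your proposal is correct in substance and follows essentially the same route as the paper: split into the cases $\lambda A\le 1$ and $\lambda A>1$, handle the first by the geometric series together with the elementary inequality $\ln(\lambda^{-1})\ge 1-\lambda$, and handle the second by locating the crossover index and counting (the paper places the crossover at $\lambda^{K}A\approx\tfrac12$ and bounds each of the first $K$ terms below by $\tfrac12$, while you place it at $1$ and add the geometric/integral tail --- an immaterial difference). One bookkeeping slip in your case (ii): after multiplying the lower bound $\frac{\ln A}{\ln(\lambda^{-1})}-1+\frac{1}{\ln(\lambda^{-1})}$ by $\ln(\lambda^{-1})$, the ``$-1$'' contributes $-\ln(\lambda^{-1})$ to the numerator, so the inequality you must verify is $\ln(\lambda A)+1\ge\tfrac12\ln(1+\lambda A)$, not $\ln A+1\ge\tfrac12\ln(1+\lambda A)$; fortunately the correct inequality does hold for $\lambda A>1$, since $\tfrac12\ln(1+\lambda A)\le\tfrac12\ln(2\lambda A)=\tfrac12\ln 2+\tfrac12\ln(\lambda A)<1+\ln(\lambda A)$, so your argument goes through once this is fixed.
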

\begin{proof}
If $A\leq \frac{1}{\lambda}$, then 
\begin{eqnarray*}
\sum_{k=1}^{\infty}\min\{1,\lambda^{k}A\} = \sum_{k=1}^{\infty} \lambda^{k}A=\frac{\lambda A}{1-\lambda}\geq \frac{\ln(1+\lambda A)}{\ln(\lambda^{-1})},
\end{eqnarray*}
which implies (\ref{lower bdd for a series}).

If $A>\frac{1}{\lambda}$, then there exists $K\geq 1$ such that 
\be\label{split integer}
\lambda^{K}A>\frac{1}{2} \quad \text{and}\quad \lambda^{K+1}A\leq \frac{1}{2}.\ee
Thus, 
\be\label{large of the sum}
\sum_{k=1}^{\infty}\min\{1,\lambda^{k}A\} \geq \sum_{k=1}^{K}\min\{1,\lambda^{k}A\} \geq \frac{K}{2}.\ee
Since $\lambda^{K+1}A\leq \frac{1}{2}$, then 
\[K+1\geq \frac{\ln(2A)}{\ln(\lambda^{-1})}.\]
Noticing $\lambda A>1$, so
\be\label{large split integer}
K\geq \frac{\ln(2A)}{\ln(\lambda^{-1})}-1\geq \frac{\ln(1+\lambda A)}{\ln(\lambda^{-1})}.\ee
Plugging (\ref{large split integer}) into (\ref{large of the sum}) also yields (\ref{lower bdd for a series}).
\end{proof}

Now we start to prove Theorem \ref{Thm, lower bdd, general case}.

\begin{proof}[{\bf Proof of Theorem \ref{Thm, lower bdd, general case}}]
For $k\geq 0$, define 
\be\label{def of M_k} M_{k}=2^{k}M_{0}.\ee
Consider the function $M(t)$ defined in (\ref{max function at time t}). Denote $T_{k}$ to be the first time that $M(t)$ reaches $M_{k}$. That is 
\be\label{def of T_k}
T_{k}=\inf\{t\geq 0: M(t)\geq M_{k}\}.\ee
Since the solution $u$ to (\ref{Prob}) is continuous on $\ol{\O}\times[0,T^{*})$, 
\be \label{T_k is min}
T_{k}=\min\{t\geq 0: M(t)=M_{k}\}.\ee
In particular, $T_{0}=0$. 

Now for any $k\geq 1$, denote 
\[t_{k}=T_{k}-T_{k-1}.\] 
If $t_{k}<1$, then by Lemma \ref{Lemma, growth rate, general} (choose $T=T_{k-1}$, $t=t_{k}$ and $\alpha=\frac{1}{2(n-1)}$),
\be\label{growth rate, general proof}
\frac{M_{k}-M_{k-1}}{M_{k}^{q}}\leq C\,|\Gamma_{1}|^{\frac{1}{2(n-1)}}\,t_{k}^{1/4}.\ee
Plugging $M_{k}=2^{k}M_{0}$ and simplifying, we obtain
\be\label{lower bdd for t_k, general}
t_{k}\geq C\,(2^{k}M_{0})^{-4(q-1)}|\Gamma_{1}|^{-\frac{2}{n-1}}.\ee
Keeping in mind that (\ref{lower bdd for t_k, general}) is valid under the assumption that $t_{k}<1$. Thus, 
\begin{eqnarray*}
t_{k} &\geq & \min\Big\{1,\, C\,(2^{k}M_{0})^{-4(q-1)}|\Gamma_{1}|^{-\frac{2}{n-1}}\Big\}\\
&\geq & C\min\Big\{1,\, 2^{-4(q-1)k}M_{0}^{-4(q-1)}|\Gamma_{1}|^{-\frac{2}{n-1}}\Big\}.
\end{eqnarray*}

Applying Lemma \ref{Lemma, lower bdd for a series}  with 
\[\lambda =2^{-4(q-1)}\quad \text{and}\quad A=M_{0}^{-4(q-1)}|\Gamma_{1}|^{-\frac{2}{n-1}},\]
then
\begin{eqnarray*}
T^{*}=\sum_{k=1}^{\infty}t_{k}&\geq & C\,\sum_{k=1}^{\infty}\min\Big\{1,\, 2^{-4(q-1)k}M_{0}^{-4(q-1)}|\Gamma_{1}|^{-\frac{2}{n-1}}\Big\}\\
&\geq & \frac{C}{(q-1)}\,\ln\Big(1+(2M_{0})^{-4(q-1)}\,|\Gamma_{1}|^{-\frac{2}{n-1}}\Big).
\end{eqnarray*}
\end{proof}

\section{Proof of Theorem \ref{Thm, lower bdd}}\label{Sec, proof for main thm}
Let's first state an analogous result as Lemma \ref{Lemma, growth rate, general} concerning the growth rate of the solution. But this time it pushes to the critical power on $|\Gamma_{1}|$.

\begin{lemma}\label{Lemma, growth rate, critical}
Let $u$ be the solution to (\ref{Prob}). Define $M(t)$ as in (\ref{max function at time t}). Then there exists $C=C(n,\O)$ such that for any $T\geq 0$ and $0\leq t<\min\{1,T^{*}-T\}$,
\be\label{key ineq}
\frac{M(T+t)-M(T)}{M^{q}(T+t)}\leq 
\left\{\begin{array}{ll}
C\,|\Gamma_{1}|^{\frac{1}{n-1}}, & \text{if}\quad n\geq 3, \vspace{0.1in}\\
C\,|\Gamma_{1}|\ln\Big(1+\dfrac{1}{|\Gamma_{1}|}\Big), & \text{if}\quad n=2.
\end{array}\right.\ee
\end{lemma}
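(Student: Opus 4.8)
The plan is to mimic the proof of Lemma \ref{Lemma, growth rate, general} line by line, but replace the application of Lemma \ref{Lemma, bdry-time int, general} with Lemma \ref{Lemma, bdry-time int, critical} at the crucial step. As before, it suffices to prove the equivalent inequality
\be\label{equiv for growth, critical, plan}
M(T+t)\leq M(T)+C\,M^{q}(T+t)\cdot\left\{\begin{array}{ll} |\Gamma_{1}|^{\frac{1}{n-1}}, & n\geq 3,\\[4pt] |\Gamma_{1}|\ln\big(1+|\Gamma_{1}|^{-1}\big), & n=2.\end{array}\right.\ee
First I would handle the trivial case: for $\sigma\in[0,T]$ and $x\in\ol{\O}$ we have $u(x,\sigma)\leq M(T)$ directly from the definition of $M$. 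So the content is in the range $\sigma\in(T,T+t]$.

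For that range, apply the representation formula (\ref{rep for soln}) of Corollary \ref{Cor, rep for soln} with time increment $\sigma-T$, bound $u(y,T)\leq M(T)$ and $u^{q}(y,T+\tau)\leq M^{q}(\sigma)\leq M^{q}(T+t)$, use part (e) of Corollary \ref{Cor, prop of NHK} (so $\int_{\O}N(x,y,\sigma-T)\,dy=1$), change variables in $\tau$, and use $N\geq 0$ together with $\sigma-T\leq t\leq 1$ to arrive at
\be\label{bdd by int of NHK, critical, plan}
u(x,\sigma)\leq M(T)+M^{q}(T+t)\int_{0}^{t}\int_{\Gamma_{1}}N(x,y,\tau)\,dS(y)\,d\tau.\ee
This is exactly (\ref{bdd by int of Green's fn}). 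Now, since $t\leq 1$, Lemma \ref{Lemma, quant of NHK} gives $N(x,y,\tau)\leq C\,\Phi(x-y,2\tau)$ for $\tau\in(0,t]$, so after the substitution $\tau\mapsto 2\tau$,
\[\int_{0}^{t}\int_{\Gamma_{1}}N(x,y,\tau)\,dS(y)\,d\tau\leq C\int_{0}^{2t}\int_{\Gamma_{1}}\Phi(x-y,\tau)\,dS(y)\,d\tau\leq C\int_{0}^{2}\int_{\Gamma_{1}}\Phi(x-y,\tau)\,dS(y)\,d\tau,\]
where the last inequality uses $2t\leq 2$ together with the nonnegativity of the integrand. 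Applying Lemma \ref{Lemma, bdry-time int, critical} bounds this by $C|\Gamma_{1}|^{\frac{1}{n-1}}$ when $n\geq 3$ and by $C|\Gamma_{1}|\ln(1+|\Gamma_{1}|^{-1})$ when $n=2$. Plugging this back into (\ref{bdd by int of NHK, critical, plan}) and combining with the trivial case proves (\ref{equiv for growth, critical, plan}), hence (\ref{key ineq}).

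There is essentially no obstacle here: every ingredient has already been established. The only mild subtlety is the uniform-in-$t$ reduction from $\int_{0}^{2t}$ to $\int_{0}^{2}$, which is legitimate precisely because $t\leq 1$ and $\Phi\geq 0$; this is what forces the restriction $t<\min\{1,T^{*}-T\}$ in the statement, mirroring Lemma \ref{Lemma, growth rate, general}. The role of this lemma in the sequel will be the same as Lemma \ref{Lemma, growth rate, general} played in the proof of Theorem \ref{Thm, lower bdd, general case}: feeding it into a discretization argument (choosing $T=T_{k-1}$, $t=t_{k}$ along the doubling times $M_{k}=2^{k}M_{0}$) will yield a lower bound for each $t_{k}$, and summing via an elementary series estimate will produce the bound (\ref{lower bdd}) of Theorem \ref{Thm, lower bdd}.
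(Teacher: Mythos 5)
Your proposal is correct and follows exactly the paper's own route: the paper's proof of this lemma is literally a one-line remark that one repeats the proof of Lemma \ref{Lemma, growth rate, general} verbatim and, since $t<1$ (so the time integral of $\Phi$ stays within $[0,2]$), invokes Lemma \ref{Lemma, bdry-time int, critical} in place of Lemma \ref{Lemma, bdry-time int, general} at the step (\ref{key est}). Your only addition, making explicit the monotone enlargement from $\int_{0}^{2t}$ to $\int_{0}^{2}$ using $\Phi\geq 0$ and $t\leq 1$, is exactly the implicit justification the paper relies on.
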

\begin{proof}
Noticing that $t<1$, so this proof is exactly the same as that of Lemma \ref{Lemma, growth rate, general}, except that Lemma \ref{Lemma, bdry-time int, critical} is needed instead of Lemma \ref{Lemma, bdry-time int, general} to estimate $\int_{0}^{t}\int_{\Gamma_{1}}N(x,y,\tau)\,dS(y)\,d\tau$ in (\ref{key est}) for $t<1$.
\end{proof}

In the rest of this section, we denote 
\be\label{def of E_q}
E_{q}=(q-1)^{q-1}/q^{q}, \quad\forall\, q>1.\ee
By elementary calculus, 
\be\label{est on E_q}
\frac{1}{3q}<E_{q}<\min\Big\{\frac{1}{q},\, \frac{1}{(q-1)\,e}\Big\}<1. \ee

\begin{lemma}\label{Lemma, criteria for step continue}
Fix any $q>1$ and $m>0$, denote $E_{q}$ as in (\ref{def of E_q}) and define $g:(m,\infty)\rightarrow \m{R}$ by
\be\label{auxiliary fcn}
g(\lambda)=\frac{\lambda-m}{\lambda^{q}},\quad\forall\,\lam>m.\ee
Then the following two claims hold.
\begin{itemize}
\item[(1)] For any $y\in\big(0, m^{1-q}E_{q}\big]$, there exists a unique $\lambda\in\big(m,\frac{q}{q-1}m\big]$ such that $g(\lambda)=y$.
\item[(2)] For any $y>m^{1-q}E_{q}$, there does not exist $\lambda>m$ such that $g(\lambda)=y$.
\end{itemize}
\end{lemma}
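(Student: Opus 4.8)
The plan is to carry out an elementary first–derivative analysis of $g$ on $(m,\infty)$. First I would compute
\[
g'(\lambda)=\lambda^{-q}+(\lambda-m)(-q)\lambda^{-q-1}=\lambda^{-q-1}\bigl(qm-(q-1)\lambda\bigr),
\]
so that $g'$ has a single zero $\lambda^{*}=\frac{q}{q-1}m$ in $(m,\infty)$, with $g'>0$ on $(m,\lambda^{*})$ and $g'<0$ on $(\lambda^{*},\infty)$. Hence $g$ is strictly increasing on $(m,\lambda^{*}]$, strictly decreasing on $[\lambda^{*},\infty)$, and its maximum over $(m,\infty)$ is attained only at $\lambda^{*}$.

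Next I would evaluate the endpoints and the peak. Since $q>1$, we have $\lim_{\lambda\to m^{+}}g(\lambda)=0$ and $\lim_{\lambda\to\infty}g(\lambda)=\lim_{\lambda\to\infty}\lambda^{1-q}(1-m/\lambda)=0$. A direct computation gives
\[
g(\lambda^{*})=\frac{\frac{q}{q-1}m-m}{\bigl(\frac{q}{q-1}m\bigr)^{q}}=\frac{m/(q-1)}{q^{q}m^{q}/(q-1)^{q}}=\frac{(q-1)^{q-1}}{q^{q}}\,m^{1-q}=m^{1-q}E_{q},
\]
using the definition (\ref{def of E_q}) of $E_{q}$. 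This immediately yields claim (2): if $y>m^{1-q}E_{q}=\max_{(m,\infty)}g$, then $g(\lambda)=y$ has no solution $\lambda>m$.

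For claim (1), fix $y\in\bigl(0,m^{1-q}E_{q}\bigr]$. Restricted to $(m,\lambda^{*}]$, $g$ is continuous and strictly increasing, with $g(\lambda)\to 0$ as $\lambda\to m^{+}$ and $g(\lambda^{*})=m^{1-q}E_{q}\ge y$; by the intermediate value theorem there is some $\lambda\in(m,\lambda^{*}]$ with $g(\lambda)=y$, and strict monotonicity on $(m,\lambda^{*}]$ makes it the only such $\lambda$ in that interval. Since $\lambda^{*}=\frac{q}{q-1}m$, this is exactly the asserted statement. The argument is entirely routine; the only points needing a little care are that $g$ is not defined at the left endpoint $\lambda=m$ (so one works with the one-sided limit, which is where $y>0$ enters) and that the uniqueness in claim (1) is asserted only within $(m,\frac{q}{q-1}m]$ — there may well be a second root of $g(\lambda)=y$ in $(\lambda^{*},\infty)$ when $y<m^{1-q}E_{q}$, but that root lies outside the stated interval and need not be treated.
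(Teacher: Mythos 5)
Your proof is correct and is exactly the routine first--derivative argument the paper has in mind when it declares the lemma elementary and omits the proof (deferring to Lemma 4.1 in \cite{YZ18}): you correctly locate the unique critical point $\lambda^{*}=\frac{q}{q-1}m$, verify $g(\lambda^{*})=m^{1-q}E_{q}$, and combine the boundary limits with strict monotonicity and the intermediate value theorem. Your closing remarks on the one-sided limit at $\lambda=m$ and on uniqueness being asserted only within $\big(m,\frac{q}{q-1}m\big]$ are also the right points of care.
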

\begin{proof}
This is elementary, so we omit the proof. One can also see Lemma 4.1 in \cite{YZ18}.
\end{proof}

\begin{proof}[{\bf Proof of Theorem \ref{Thm, lower bdd}}]
We will demonstrate the detailed proof for the case $n\geq 3$ and briefly mention the case $n=2$ at the end since they are similar. In this proof, $C$ denote the constants which only depend on $n$ and $\O$, the values of $C$ may be different in different places. But $C^{*}$ will represent fixed constants which also only depend on $n$ and $\O$. $M(t)$ represents the same function as in (\ref{max function at time t}). The strategy of the proof is to find an appropriate finite increasing sequence $(M_{k})_{0\leq k\leq L}$ such that 
\be\label{large gap} T_{k}-T_{k-1}>1, \quad\forall\, 1\leq k\leq L,\ee
where $T_{k}$ is defined as
\[T_{k}=\min\{t\geq 0: M(t)=M_{k}\}.\]
After such a sequence is found, we will derive a lower bound for $L$ which is also a lower bound for $T^{*}$ due to (\ref{large gap}).

Let $n\geq 3$. Based on Lemma \ref{Lemma, growth rate, critical}, there exists a constant $C^{*}=C^{*}(n,\O)$ such that for any $T\geq 0$ and $0\leq t<\min\{1,T^{*}-T\}$, 
\be\label{key ineq, fixed const}
\frac{M(T+t)-M(T)}{M^{q}(T+t)}\leq 
C^{*}\,|\Gamma_{1}|^{\frac{1}{n-1}}.\ee
Define 
\be\label{def of delta_1}
\delta_{1}=2C^{*}|\Gamma_{1}|^{\frac{1}{n-1}}.\ee
Then we will use induction to construct a sequence $(M_{k})$ as below. 
\begin{itemize}
\item Define $M_{0}$ as in (\ref{initial max}).

\item Suppose $M_{k-1}$ has been constructed for some $k\geq 1$.
\begin{itemize}
\item[$\diamond$] If $M_{k-1}^{q-1}\delta_{1}\leq E_{q}$, then according to Lemma \ref{Lemma, criteria for step continue}, we define $M_{k}$ to be the unique solution such that
\be\label{smallness of M_k}
M_{k-1}<M_{k}\leq \frac{q}{q-1}M_{k-1} \ee
and
\be\label{def of M_k, critical}
\frac{M_{k}-M_{k-1}}{M_{k}^{q}}=\delta_{1}.\ee
 
\item[$\diamond$] If $M_{k-1}^{q-1}\delta_{1}> E_{q}$, then we do not define $M_{k}$ and stop the construction.
\end{itemize}
\end{itemize}
In the following, we will first show that the above construction stops after finite steps.

In fact, if the above construction continuous forever, then $M_{k-1}^{q-1}\,\delta_{1}\leq E_{q}$ for any $k\geq 1$. In addition, both (\ref{smallness of M_k}) and (\ref{def of M_k, critical}) hold. As a result, $(M_{k})_{k\geq 0}$ is a strictly increasing sequence and 
\beas M_{k}&=& M_{k-1}+M_{k}^{q}\,\delta_{1} \\
&\geq & M_{k-1}+ M_{k-1}M_{0}^{q-1}\,\delta_{1}\\
&=& (1+M_{0}^{q-1}\,\delta_{1})M_{k-1}.\eeas
Hence, 
\[M_{k}\geq (1+M_{0}^{q-1}\,\delta_{1})^{k}M_{0}\rightarrow \infty \quad\text{as}\quad k\rightarrow\infty,\]
which contradicts to the fact that $M_{k}^{q-1}\delta_{1}\leq E_{q}$. Thus, the inductive construction stops after finite steps and we denote the last term to be $M_{L}$.

For any $1\leq k\leq L$, write 
\[t_{k}=T_{k}-T_{k-1}.\]
If $t_{k}<1$, then plugging $T=T_{k-1}$ and $t=t_{k}$ into (\ref{key ineq, fixed const}) yields
\be\label{contradict of M_k}
\frac{M_{k}-M_{k-1}}{M_{k}^{q}}\leq C^{*}\,|\Gamma_{1}|^{\frac{1}{n-1}}.\ee
But this contradicts to the choice of $M_{k}$ in (\ref{def of M_k, critical}) due to the definition (\ref{def of delta_1}) for $\delta_{1}$. Hence, $t_{k}\geq 1$. As a result,
\be\label{T^* larger than L}
T^{*}\geq \sum_{k=1}^{L}t_{k}>L.\ee
The rest of the proof will provide a lower bound for $L$.

In fact,  we will prove that
\be\label{lower bdd for L}
L>\frac{1}{10(q-1)}\Big(\frac{1}{M_{0}^{q-1}\,\delta_{1}}-9q\Big).\ee
But before justifying this lower bound, let us first admit it and finish the proof of Theorem \ref{Thm, lower bdd}. Combining (\ref{T^* larger than L}) and (\ref{lower bdd for L}), 
\[T^{*}>\frac{1}{10(q-1)}\Big(\frac{1}{M_{0}^{q-1}\,\delta_{1}}-9q\Big).\]
Recalling the definition of $\delta_{1}$,
\[T^{*}>\frac{1}{10(q-1)}\Big(\frac{1}{2C^{*}\,M_{0}^{q-1}\,|\Gamma_{1}|^{1/(n-1)}}-9q\Big).\]
Denote $Y=M_{0}^{q-1}|\Gamma_{1}|^{\frac{1}{n-1}}$. Then 
\be\label{lower bdd for T^*, 1st}
T^{*}>\frac{1}{10(q-1)}\Big(\frac{1}{2C^{*}Y}-9q\Big).\ee
If 
\[Y\leq \frac{1}{36\,C^{*}q},\]
then $q\leq 1/(36C^{*}Y)$ and it follows from (\ref{lower bdd for T^*, 1st}) that
\[T^{*}\geq \frac{1}{40C^{*}(q-1)Y}.\]
Hence, we finish the proof of Theorem \ref{Thm, lower bdd}.

The remaining task is to verify (\ref{lower bdd for L}). If $M_{0}^{q-1}\delta_{1}>\frac{1}{9q}$, then (\ref{lower bdd for L}) holds automatically. So from now on, we assume $M_{0}^{q-1}\delta_{1}\leq \frac{1}{9q}$. Recalling (\ref{est on E_q}), we have
\be\label{small initial}
M_{0}^{q-1}\delta_{1}\leq \min\Big\{\frac{1}{2},\,E_{q}\Big\}.\ee
On the other hand, since the construction stops at $M_{L}$, then 
\be\label{large last term}
M_{L}^{q-1}\delta_{1}> E_{q}.\ee
Comparing (\ref{small initial}) and (\ref{large last term}), we conclude that $L$ is at least 1. As a result, there exists $1\leq L_{0}\leq L$ such that
\be\label{border index}
M_{L_{0}-1}^{q-1}\delta_{1}\leq \min\Big\{\frac{1}{2},\,E_{q}\Big\} \quad\text{and}\quad M_{L_{0}}^{q-1}\delta_{1}>\min\Big\{\frac{1}{2},\,E_{q}\Big\}.\ee 
The reason of considering $\min\big\{\frac{1}{2},\,E_{q}\big\}$ here instead of $E_{q}$ is because later we need the upper bound $\frac{1}{2}$ to justify (\ref{linearize}).
According to (\ref{def of M_k, critical}), 
\[M_{k-1}=M_{k}-M_{k}^{q} \delta_{1}=M_{k}\big(1-M_{k}^{q-1} \delta_{1}\big).\] 
Raising both sides to the power $q-1$ and multiplying by $\delta_{1}$,
\[M_{k-1}^{q-1}\delta_{1}=M_{k}^{q-1}\big(1-M_{k}^{q-1} \delta_{1}\big)^{q-1}\delta_{1}.\]
Define $x_{k}=M_{k}^{q-1}\delta_{1}$. Then $x_0=M_{0}^{q-1}\delta_{1}$ and
\be\label{iteration for x_k}
x_{k-1}=x_{k}\,(1-x_{k})^{q-1}, \quad\forall\, 1\leq k\leq L.\ee
Moreover, it follows from (\ref{border index}) that
\[x_{L_{0}-1}\leq \min\Big\{\frac{1}{2},\,E_{q}\Big\} \quad\text{and}\quad x_{L_{0}}>\min\Big\{\frac{1}{2},\,E_{q}\Big\}.\]

Now we claim the following inequality:
\begin{align}\label{ineq between last and first for x_k}
\frac{1}{x_{0}} \leq \frac{1}{x_{L_{0}-1}}+10(q-1)(L_{0}-1).
\end{align}
In fact, if $L_{0}=1$, then (\ref{ineq between last and first for x_k}) automatically holds. If $L_{0}\geq 2$, then for any $1\leq k\leq L_{0}-1$, we have
$0<x_{k}\leq x_{L_{0}-1}\leq 1/2$ and therefore,
\be\label{linearize}
x_{k-1}=x_{k}(1-x_{k})^{q-1}\geq x_{k}\big(1-2(q-1)x_{k}\big).\ee
Recalling the fact $x_{k}\leq x_{L_{0}-1}\leq E_{q}$ and the estimate $E_{q}<\frac{1}{(q-1)e}$ in (\ref{est on E_q}), then
\[1-2(q-1)x_{k}\geq 1-2(q-1)E_{q}\geq \frac{1}{5}.\]
Hence, taking the reciprocal in (\ref{linearize}) yields 
\begin{align}
\frac{1}{x_{k-1}} &\leq \frac{1}{x_{k}\big[1-2(q-1)x_{k}\big]} \notag\\
& =\frac{1}{x_{k}}+\frac{2(q-1)}{1-2(q-1)x_{k}} \notag\\
&\leq \frac{1}{x_{k}}+10(q-1). \label{iteration ineq}
\end{align}
Summing up (\ref{iteration ineq}) for $k$ from $1$ to $L_{0}-1$ yields (\ref{ineq between last and first for x_k}).  

Finally, since (\ref{smallness of M_k}) implies $M_{L_{0}}\leq \frac{q}{q-1}M_{L_{0}-1}$, then 
\[x_{L_{0}}=\bigg(\frac{M_{L_{0}}}{M_{L_{0}-1}}\bigg)^{q-1}x_{L_{0}-1}\leq \Big(\frac{q}{q-1}\Big)^{q-1}E_{q}=\frac{1}{q}.\]
Thus,
\[\frac{1}{3q}<\min\Big\{\frac{1}{2}, E_{q}\Big\}< x_{L_{0}}\leq \frac{1}{q},\]
Recalling (\ref{iteration for x_k}), then
\begin{align*}
x_{L_{0}-1} &= x_{L_{0}}(1-x_{L_{0}})^{q-1}\\
& >\frac{1}{3q}\Big(1-\frac{1}{q}\Big)^{q-1}=\frac{E_{q}}{3}.
\end{align*}
Plugging the above inequality and $x_{0}=M_{0}^{q-1}\delta_{1}$ into (\ref{ineq between last and first for x_k}), 
\begin{align*}
\frac{1}{M_{0}^{q-1}\delta_{1}} &< \frac{3}{E_{q}}+10(q-1)(L_{0}-1)\\
&< 9q+10(q-1)(L_{0}-1).
\end{align*}
Rearranging this inequality yields
\[L_{0}>\frac{1}{10(q-1)}\bigg(\frac{1}{M_{0}^{q-1}\delta_{1}}-9q\bigg)+1.\]
Hence, (\ref{lower bdd for L}) follows from $L\geq L_{0}$.

The proof for the case $n=2$ is almost the same except (due to Lemma \ref{Lemma, growth rate, critical}) changing $|\Gamma_{1}|^{\frac{1}{n-1}}$ to be $|\Gamma_{1}|\ln\Big(1+\dfrac{1}{|\Gamma_{1}|}\Big)$ in the above arguments.
\end{proof}

\section{Sharpness of the Key Estimate}
\label{Sec, sharp}
The key estimate Lemma \ref{Lemma, bdry-time int, critical} played an essential role in the proof of Lemma \ref{Lemma, growth rate, critical} (and therefore Theorem \ref{Thm, lower bdd}). Actually, the order on $|\Gamma_{1}|$ in Lemma \ref{Lemma, bdry-time int, critical} determines the order on $|\Gamma_{1}|$ of the lower bound of $T^{*}$ in Theorem \ref{Thm, lower bdd} as $|\Gamma_{1}|\rightarrow 0^{+}$. So it is desired to explore whether the order on $|\Gamma_{1}|$ in Lemma \ref{Lemma, bdry-time int, critical} is optimal as $|\Gamma_{1}|\rightarrow 0^{+}$? The goal of this section is to give an affirmative answer to this question when $\Gamma_{1}$, as a partial boundary of $\O$, is flat and ball-shaped (see e.g. Figure \ref{Fig, flat broken part}). As notation conventions, we denote 
\[\wt{B}(\rho)=\{(\t{x},0): \t{x}\in\m{R}^{n-1}, |\t{x}|<\rho\}\]
to be the flat ball centered at the origin on the hypersurface $\{x\in\m{R}^{n}: x_{n}=0\}$ with radius $\rho$. In addition, we use $\t{0}$ to represent the origin in $\m{R}^{n-1}$ and define $B(\t{x},\rho)$ as (\ref{ball in R^(n-1)}) to be the ball in $\m{R}^{n-1}$.

\begin{proposition}\label{Prop, lower bdd for bdry-time int}
There exists $C=C(n)$ such that for any $\Gamma_{1}=\wt{B}(\rho)$ with $0<\rho\leq 1$ and for any $x\in\ol{\Gamma}_{1}$,
\be\label{lower bdd for bdry-time int}
\int_{0}^{2}\int_{\Gamma_1}\Phi(x-y,t)\,dS(y)\,dt\geq \left\{\begin{array}{ll}
C\,|\Gamma_{1}|^{\frac{1}{n-1}}, & \text{if}\quad n\geq 3, \vspace{0.1in}\\
C\,|\Gamma_{1}|\ln\Big(1+\dfrac{1}{|\Gamma_{1}|}\Big), & \text{if}\quad n=2.
\end{array}\right.\ee
\end{proposition}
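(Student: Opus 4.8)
The plan is to use the flatness of $\Gamma_1$ to turn the boundary--time integral into an explicit one--dimensional integral in $t$, and then obtain the lower bound by a crude restriction of the domain of integration. Writing $x=(\t{x},0)$ and $y=(\t{y},0)$ with $\t{x},\t{y}\in\m{R}^{n-1}$, the hypothesis $x\in\ol{\Gamma}_1=\ol{\wt{B}(\rho)}$ gives $|\t{x}|\le\rho$, while $\Gamma_1$ is identified with the Euclidean ball $B(\t{0},\rho)\subset\m{R}^{n-1}$, on which $dS(y)=d\t{y}$. Since both $x$ and $y$ lie in the hyperplane $\{x_n=0\}$ one has $|x-y|=|\t{x}-\t{y}|$, so
\[
\int_{0}^{2}\int_{\Gamma_1}\Phi(x-y,t)\,dS(y)\,dt=\int_{0}^{2}\int_{B(\t{0},\rho)}\frac{1}{(4\pi t)^{n/2}}\exp\!\Big(-\frac{|\t{x}-\t{y}|^2}{4t}\Big)\,d\t{y}\,dt .
\]

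Next I would discard most of the integral. Restrict $t$ to the interval $[\rho^2,2]$ (which is nonempty because $\rho\le1$ forces $\rho^2\le1<2$) and keep all of $B(\t{0},\rho)$. For $\t{y}\in B(\t{0},\rho)$ and $|\t{x}|\le\rho$ one has the elementary bound $|\t{x}-\t{y}|\le 2\rho$, coming only from $\operatorname{diam}\Gamma_1=2\rho$; it holds uniformly in $x\in\ol{\Gamma}_1$, which is the reason I prefer it to any attempt to center a small ball at $x$ (awkward when $x\in\p\Gamma_1$). Consequently $\exp(-|\t{x}-\t{y}|^2/(4t))\ge\exp(-\rho^2/t)\ge e^{-1}$ on this region, and since $|B(\t{0},\rho)|=|\Gamma_1|$,
\[
\int_{0}^{2}\int_{\Gamma_1}\Phi(x-y,t)\,dS(y)\,dt\ \ge\ \frac{e^{-1}}{(4\pi)^{n/2}}\,|\Gamma_1|\int_{\rho^2}^{2}t^{-n/2}\,dt .
\]

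It then remains to evaluate $\int_{\rho^2}^{2}t^{-n/2}\,dt$ and to recall $|\Gamma_1|=\omega_{n-1}\rho^{n-1}$, where $\omega_{n-1}$ (the volume of the unit ball in $\m{R}^{n-1}$) depends only on $n$ and $\omega_1=2$. For $n\ge3$, $\int_{\rho^2}^{2}t^{-n/2}\,dt=\tfrac{2}{n-2}\big(\rho^{2-n}-2^{1-n/2}\big)\ge c_n\,\rho^{2-n}$ because $\rho\le1$; multiplying by $|\Gamma_1|=\omega_{n-1}\rho^{n-1}$ gives a bound of the form $C(n)\,\rho$, and $\rho$ is comparable to $|\Gamma_1|^{1/(n-1)}=\omega_{n-1}^{1/(n-1)}\rho$, giving the $n\ge3$ conclusion. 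For $n=2$, $\int_{\rho^2}^{2}t^{-1}\,dt=\ln(2/\rho^2)\ge\ln(1+1/\rho)$, the inequality $1+1/\rho\le2/\rho^2$ being exactly the statement $\rho\in(0,1]$; multiplying by $|\Gamma_1|=2\rho$ and using $\ln(1+1/\rho)\ge\ln(1+1/|\Gamma_1|)$ yields the $n=2$ conclusion. In all cases the constant depends only on $n$.

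I do not anticipate a genuine obstacle: after the flattening, the argument is a one-line restriction of the domain followed by an elementary integral. The only points that need a little care are keeping the estimate uniform over $x\in\ol{\Gamma}_1$ (handled by the diameter bound $|\t{x}-\t{y}|\le2\rho$) and rewriting the $n=2$ output $\rho\,\ln(2/\rho^2)$ in the stated form $|\Gamma_1|\ln(1+1/|\Gamma_1|)$, which is the short logarithm comparison indicated above.
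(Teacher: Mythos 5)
Your proof is correct, and it takes a genuinely different and more elementary route than the paper. The paper first uses an overlap-plus-radial-symmetry argument to replace the ball $B(\t{x},\rho)$ by $B(\t{0},\rho)$ up to a constant, passes to polar coordinates to reduce everything to the radial quantity $\phi_{n}(2,\rho)=\int_{0}^{2}\int_{0}^{\rho}r^{n-2}t^{-n/2}\exp\big(-\tfrac{r^{2}}{4t}\big)\,dr\,dt$, and then invokes two auxiliary lemmas giving \emph{two-sided} bounds on $\phi_{n}(T,R)$, proved by splitting the $(r,t)$-region along the parabola $t=r^{2}$. You bypass all of this: after the same flattening ($|x-y|=|\t{x}-\t{y}|$, $dS=d\t{y}$), you simply restrict to $t\in[\rho^{2},2]$, use the uniform diameter bound $|\t{x}-\t{y}|\le 2\rho$ to make the exponential $\ge e^{-1}$ there, and compute $\int_{\rho^{2}}^{2}t^{-n/2}\,dt$ explicitly; the elementary comparisons $\rho^{2-n}-2^{1-n/2}\ge c_{n}\rho^{2-n}$ (using $\rho\le 1$), $2/\rho^{2}\ge 1+1/\rho$ for $\rho\in(0,1]$, and $\ln(1+1/\rho)\ge\ln(1+1/|\Gamma_{1}|)$ since $|\Gamma_{1}|=2\rho\ge\rho$ are all valid, so the constants depend only on $n$ and the bound is uniform in $x\in\ol{\Gamma}_{1}$ as required. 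What your shortcut gives up is only the extra information in the paper's Lemmas 5.2--5.3: those provide matching upper bounds and the asymptotics of $\phi_{n}(T,R)$ for all $T,R$, which the paper also uses in Section 2 to explain why the naive estimate (2.14) fails as $t\to\infty$ or as $|\Gamma_{1}|\to 0^{+}$ when $n=2$; for the proposition itself, your one-sided argument is sufficient and shorter.
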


In order to prove Proposition \ref{Prop, lower bdd for bdry-time int}, we need two elementary results, Lemma \ref{Lemma, radial bdry-time int, large dim} and Lemma \ref{Lemma, radial bdry-time int, dim 2}, which may be of independent interest. In this section, for any positive integer $n\geq 2$, we define the function $\phi_{n}:\m{R}_{+}\times\m{R}_{+}\rightarrow \m{R}_{+}$ by 
\be\label{def of phi_n}
\phi_{n}(T,R)=\int_{0}^{T}\int_{0}^{R}\frac{r^{n-2}}{t^{n/2}}\,\exp\Big(-\frac{r^{2}}{4t}\Big)\,dr\,dt, \quad\forall\,T>0,\, R>0.\ee

\begin{lemma}\label{Lemma, radial bdry-time int, large dim}
Let $n\geq 3$ and define $\phi_{n}$ as (\ref{def of phi_n}). Then there exist $C_{1}=C_{1}(n)$ and $C_{2}=C_{2}(n)$ such that
\be\label{radial bdry-time int, large dim}
C_{1}\leq \frac{\phi_{n}(T,R)}{\min\{\sqrt{T},\,R\}}\leq C_{2}, \quad \forall\, T>0,\,R>0.\ee
\end{lemma}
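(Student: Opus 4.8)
The plan is to estimate the double integral $\phi_n(T,R)=\int_0^T\int_0^R t^{-n/2}r^{n-2}e^{-r^2/(4t)}\,dr\,dt$ by exploiting the natural parabolic scaling, which suggests that the relevant quantity is the length scale $\min\{\sqrt T,R\}$. First I would perform the substitution $r=2\sqrt t\,s$ in the inner integral, turning it into $t^{-1/2}\int_0^{R/(2\sqrt t)} s^{n-2}e^{-s^2}\,ds$, so that $\phi_n(T,R)=2^{n-1}\int_0^T t^{-1/2}\,\Psi\!\big(R/(2\sqrt t)\big)\,dt$ where $\Psi(a)=\int_0^a s^{n-2}e^{-s^2}\,ds$ is bounded, increasing, equals the finite constant $\Psi(\infty)$ as $a\to\infty$, and behaves like $a^{n-1}/(n-1)$ for small $a$. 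Then the further substitution $t=R^2/(4\sigma^2)$ (or more simply $u=\sqrt t$) reduces the outer integral to a one-variable integral that, after rescaling by $\sqrt T$ or $R$ as appropriate, is manifestly comparable to $\min\{\sqrt T,R\}$ times a dimensionless integral; the point is that both the integrand $t^{-1/2}\Psi(R/(2\sqrt t))$ and the domain $(0,T)$ scale correctly.

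Concretely, I would split into the two regimes $\sqrt T\le R$ and $\sqrt T> R$. In the regime $\sqrt T\le R$, for $t\in(0,T)$ one has $R/(2\sqrt t)\ge R/(2\sqrt T)\ge 1/2$, so $\Psi(R/(2\sqrt t))\ge\Psi(1/2)>0$ is bounded below by a positive constant depending only on $n$, and also $\Psi\le\Psi(\infty)$; hence $\phi_n(T,R)\asymp\int_0^T t^{-1/2}\,dt=2\sqrt T=2\min\{\sqrt T,R\}$. In the regime $\sqrt T> R$, I would split the $t$-integral at $t=R^2$: on $(0,R^2)$ the same argument as above gives a contribution $\asymp R$, while on $(R^2,T)$ we use $\Psi(R/(2\sqrt t))\le \Psi(R/(2\sqrt t))$, and since $R/(2\sqrt t)\le 1/2$ there, the small-argument bound $\Psi(a)\le C a^{n-1}$ gives integrand $\le C R^{n-1} t^{-1/2}t^{-(n-1)/2}=CR^{n-1}t^{-n/2}$, whose integral over $(R^2,\infty)$ is finite and $\asymp R^{n-1}\cdot R^{2-n}=R$ (here $n\ge 3$ is exactly what makes $\int^\infty t^{-n/2}\,dt$ converge and the exponent arithmetic work out). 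Combining, $\phi_n(T,R)\asymp R=\min\{\sqrt T,R\}$ in this regime as well, with constants depending only on $n$.

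The main obstacle — really the only subtlety — is making the two-sided bound uniform across the transition between the regimes and keeping the constants depending on $n$ alone; this is handled cleanly by the scaling substitutions above, which reduce everything to the single dimensionless function $\Psi$ and the two universal integrals $\int_0^1 t^{-1/2}\,dt$ and $\int_1^\infty t^{-n/2}\,dt$. One should also note the use of $n\ge 3$: when $n=2$ the tail integral $\int^\infty t^{-n/2}\,dt=\int^\infty t^{-1}\,dt$ diverges logarithmically, which is precisely why $n=2$ is excluded here and treated separately in Lemma \ref{Lemma, radial bdry-time int, dim 2}. I would therefore present the proof as: (i) the scaling reduction to $\Psi$; (ii) the lower bound via $\Psi(R/(2\sqrt t))\ge\Psi(1/2)$ on $t\le\min\{T,R^2\}$ giving $\phi_n\gtrsim\min\{\sqrt T,R\}$; (iii) the upper bound splitting at $R^2$ and using $\Psi\le\min\{\Psi(\infty),\,Ca^{n-1}\}$ together with convergence of $\int^\infty t^{-n/2}\,dt$ for $n\ge 3$.
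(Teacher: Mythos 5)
Your argument is correct, and in substance it rests on the same two facts as the paper's proof: the Gaussian factor $e^{-r^{2}/(4t)}$ is comparable to $1$ when $r\lesssim\sqrt{t}$, and the contribution from $r\gtrsim\sqrt{t}$ (equivalently $t\lesssim r^{2}$) is controlled by an integral that converges precisely because $n\geq 3$. The organization, however, is different. The paper keeps the double integral in the original variables $(r,t)$ and splits the rectangle $(0,R)\times(0,T)$ along the parabola $t=r^{2}$, treating the two cases $T<R^{2}$ and $T\geq R^{2}$ separately; on the region $t>r^{2}$ it uses $e^{-1/4}\leq e^{-r^{2}/(4t)}\leq 1$ and computes $\int\!\!\int r^{n-2}t^{-n/2}$ explicitly (yielding the constants $2\sqrt{T}/(n-1)$, $2R/(n-2)$, etc.), and on the region $t<r^{2}$ it performs the substitution $y=r^{2}/t$ to see that the inner $t$-integral is a constant. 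You instead integrate out $r$ first via $r=2\sqrt{t}\,s$, reducing everything to the profile $\Psi(a)=\int_{0}^{a}s^{n-2}e^{-s^{2}}\,ds$, and then split the remaining $t$-integral at $\min\{T,R^{2}\}$, using $\Psi(1/2)\leq\Psi\leq\Psi(\infty)$ on the inner range and $\Psi(a)\leq Ca^{n-1}$ together with $\int_{R^{2}}^{\infty}t^{-n/2}\,dt\asymp R^{2-n}$ on the tail. Your route is slightly slicker and makes the parabolic scaling and the role of $n\geq 3$ (versus the logarithmic divergence at $n=2$) transparent, while the paper's region-by-region computation produces explicit numerical constants and is the template it then reuses verbatim for the $n=2$ case in Lemma \ref{Lemma, radial bdry-time int, dim 2}. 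One cosmetic slip: the clause ``we use $\Psi(R/(2\sqrt t))\le \Psi(R/(2\sqrt t))$'' is a tautology and should read $\Psi(a)\leq Ca^{n-1}$ for $a\leq 1/2$, which is what you in fact use immediately afterwards.
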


\begin{lemma}\label{Lemma, radial bdry-time int, dim 2}
Define $\phi_{2}$ as (\ref{def of phi_n}) with $n=2$. Then there exist two universal constants $C_{1}$ and $C_{2}$ such that 
\be\label{radial bdry-time int, dim 2, small T}
C_{1}\leq \frac{\phi_{2}(T,R)}{\sqrt{T}}\leq C_{2}, \quad\forall\, 0<T<R^{2},\ee
and
\be\label{radial bdry-time int, dim 2, large T}
C_{1}\leq \frac{\phi_{2}(T,R)}{R\big[1+\ln\big(\frac{T}{R^{2}}\big)\big]}\leq C_{2}, \quad\forall\, 0<R^{2}\leq T.\ee
\end{lemma}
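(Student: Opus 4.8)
The plan is to reduce the statement for $\phi_2$ to explicit one-dimensional integrals by first integrating in $t$ (or, alternatively, by a clean change of variables), and then to estimate the resulting radial integral separately in the regimes $0<T<R^2$ and $R^2\le T$. The key observation is that the inner integral $\int_0^R r^{n-2} t^{-n/2}\exp(-r^2/4t)\,dr$ with $n=2$ becomes $\int_0^R t^{-1}\exp(-r^2/4t)\,dr$, and after the substitution $r=2\sqrt{t}\,s$ it equals $t^{-1/2}\int_0^{R/(2\sqrt t)} e^{-s^2}\,ds$. Therefore
\be\label{phi2 rewritten}
\phi_2(T,R)=\int_0^T \frac{1}{\sqrt t}\,\Big(\int_0^{R/(2\sqrt t)} e^{-s^2}\,ds\Big)\,dt,
\ee
and the substitution $t=\tau^2$, $dt=2\tau\,d\tau$, turns this into $2\int_0^{\sqrt T}\!\big(\int_0^{R/(2\tau)} e^{-s^2}\,ds\big)\,d\tau$. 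The inner factor is an increasing function of $R/\tau$ that is bounded above by $\tfrac{\sqrt\pi}{2}$ and is comparable to $R/\tau$ when $R/\tau$ is small.

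First I would handle the case $0<T<R^2$, i.e. $\sqrt T<R$, which gives \eqref{radial bdry-time int, dim 2, small T}. For the upper bound, simply bound the inner integral by $\int_0^\infty e^{-s^2}\,ds=\tfrac{\sqrt\pi}{2}$, so that $\phi_2(T,R)\le 2\cdot\tfrac{\sqrt\pi}{2}\cdot\sqrt T=\sqrt\pi\,\sqrt T$. For the lower bound, restrict the outer integration to $\tau\in(0,\sqrt T)$ where, since $\tau<\sqrt T<R$, we have $R/(2\tau)>1/2$, hence $\int_0^{R/(2\tau)} e^{-s^2}\,ds\ge \int_0^{1/2} e^{-s^2}\,ds$, a positive universal constant; integrating over $\tau\in(0,\sqrt T)$ yields $\phi_2(T,R)\ge c\sqrt T$. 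This settles \eqref{radial bdry-time int, dim 2, small T}.

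Next I would treat $R^2\le T$, i.e. $\sqrt T\ge R$, which gives \eqref{radial bdry-time int, dim 2, large T}. Split the outer integral $2\int_0^{\sqrt T}$ at $\tau=R$: on $(0,R)$ the inner integral is bounded between the universal constant $\int_0^{1/2}e^{-s^2}ds$ and $\tfrac{\sqrt\pi}{2}$ (since $R/(2\tau)\ge 1/2$ there), contributing a quantity comparable to $R$; on $(R,\sqrt T)$ we use that $R/(2\tau)\le 1/2$, where $\int_0^{R/(2\tau)}e^{-s^2}\,ds$ is comparable to $R/\tau$ (two-sided: between $e^{-1/4}\cdot R/(2\tau)$ and $R/(2\tau)$), so that part contributes a quantity comparable to $R\int_R^{\sqrt T}\tau^{-1}\,d\tau=R\ln(\sqrt T/R)=\tfrac12 R\ln(T/R^2)$. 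Adding the two pieces gives $\phi_2(T,R)$ comparable to $R\big(1+\ln(T/R^2)\big)$, which is exactly \eqref{radial bdry-time int, dim 2, large T}. The analogous (easier) computation for $n\ge3$ gives Lemma \ref{Lemma, radial bdry-time int, large dim}: after the substitution $r=2\sqrt t\,s$ the inner integral becomes $t^{-1/2}\int_0^{R/(2\sqrt t)} s^{n-2}e^{-s^2}\,ds$, whose integrand now has the extra factor $s^{n-2}$ making $\int_0^{\infty}s^{n-2}e^{-s^2}\,ds<\infty$ and the small-argument behaviour $\sim (R/\sqrt t)^{n-1}$; splitting $\int_0^{\sqrt T}$ at $\tau=R$ exactly as above, the tail contributes $\int_R^{\sqrt T}(R/\tau)^{n-1}\,d\tau$, which for $n\ge3$ converges and is $O(R)$, so $\phi_n(T,R)$ is comparable to $\min\{\sqrt T,R\}$ with no logarithm.

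The only mildly delicate point — the main obstacle, such as it is — is making the two-sided comparison $\int_0^{a}e^{-s^2}\,ds\asymp \min\{1,a\}$ (more precisely $\asymp a$ for $a\le\tfrac12$ and $\asymp 1$ for $a\ge\tfrac12$) clean enough to drive both the upper and lower bounds with constants depending only on $n$; this is elementary but must be stated carefully so that the splitting at $\tau=R$ matches the threshold in the comparison. Everything else is routine one-variable calculus, and I would organize the write-up as: (i) the change of variables producing \eqref{phi2 rewritten} and its $n\ge3$ analogue; (ii) the elementary comparison lemma for the incomplete Gaussian integral; (iii) the regime $\sqrt T<R$; (iv) the regime $\sqrt T\ge R$ with the split at $\tau=R$.
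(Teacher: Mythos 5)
Your proposal is correct, and it reaches the two-sided bounds by a somewhat different organization than the paper. You integrate out $r$ first (via $r=2\sqrt{t}\,s$), reducing $\phi_2(T,R)$ to the one-dimensional integral $2\int_0^{\sqrt T}\bigl(\int_0^{R/(2\tau)}e^{-s^2}\,ds\bigr)d\tau$, and then drive everything with the single elementary comparison $\int_0^a e^{-s^2}\,ds\asymp\min\{1,a\}$, splitting the $\tau$-integral at $\tau=R$. The paper instead splits the two-dimensional $(r,t)$ region along the parabola $t=r^2$: above the parabola it bounds the exponential between $e^{-1/4}$ and $1$ and computes the remaining integral explicitly (giving the $\sqrt T$ term in (\ref{radial bdry-time int, dim 2, small T}) and the $R[\ln(T/R^2)+2]$ term in (\ref{radial bdry-time int, dim 2, large T})), while below the parabola it integrates in $t$ with the substitution $y=r^2/t$ to get a contribution $CR$, reusing verbatim the Case 1/Case 2 structure of Lemma \ref{Lemma, radial bdry-time int, large dim}. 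Both arguments hinge on the same parabolic scaling $r\sim\sqrt t$ (your threshold $R/(2\tau)\sim\tfrac12$ is the paper's curve $t=r^2$); the trade-off is that your route packages the Gaussian estimate once and treats $n=2$ and $n\ge 3$ uniformly, at the cost of stating the comparison lemma carefully, whereas the paper's 2D splitting keeps every step an explicit elementary integral. Your endgame is fine: adding the piece $\asymp R$ from $\tau\in(0,R)$ to the piece $\asymp R\ln(T/R^2)$ from $\tau\in(R,\sqrt T)$ does give two-sided comparability with $R\bigl[1+\ln(T/R^2)\bigr]$, including the boundary case $T=R^2$.
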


We will first prove Proposition \ref{Prop, lower bdd for bdry-time int} by admitting Lemma \ref{Lemma, radial bdry-time int, large dim} and Lemma \ref{Lemma, radial bdry-time int, dim 2}, and then justify these two lemmas at the end of this section.

\begin{proof}[{\bf Proof of Proposition \ref{Prop, lower bdd for bdry-time int}}]
Noticing that the surface area of $\Gamma_{1}$ is
\[|\Gamma_{1}|=C\rho^{n-1},\]
so it is equivalent to prove
\be\label{lower bdd for bdry-time int, radius}
\int_{0}^{2}\int_{\wt{B}(\rho)}\Phi(x-y,t)\,dS(y)\,dt\geq \left\{\begin{array}{ll}
C\rho, & \text{if}\quad n\geq 3, \vspace{0.1in}\\
C\rho\ln\Big(1+\dfrac{1}{\rho}\Big), & \text{if}\quad n=2.
\end{array}\right.\ee
Since $x\in\ol{\Gamma}_{1}$, we write $x=(\t{x},0)$, where $\t{x}\in\m{R}^{n-1}$ with $|\t{x}|\leq \rho$. Then by the parametrization $y=(\t{y},0)$ on $\wt{B}(\rho)$, we have
\begin{eqnarray*}
\int_{0}^{2}\int_{\wt{B}(\rho)}\Phi(x-y,t)\,dS(y)\,dt &=& C\int_{0}^{2}t^{-n/2}\int_{B(\t{0},\rho)}\exp\Big(-\frac{|\t{x}-\t{y}|^{2}}{4t}\Big)\,d\t{y}\,dt \\
&=& C\int_{0}^{2}t^{-n/2}\int_{B(\t{x},\rho)}\exp\Big(-\frac{|\t{y}|^{2}}{4t}\Big)\,d\t{y}\,dt.
\end{eqnarray*}
Since $|\t{x}|\leq \rho$, the overlap between $B(\t{x},\rho)$ and $B(\t{0},\rho)$ is comparable to $B(\t{0},\rho)$. For example, in Figure \ref{Fig, overlap} which shows the case when $n=3$, the overlap is at least one-third of $B(\t{0},\rho)$. 
\begin{figure}[htbp]
\centering 
\begin{tikzpicture}[scale=1]
\draw [thick, domain=0:360] plot ({cos(\x)}, {sin(\x)});
\draw [red, thick, domain=0:360] plot ({cos(\x)-1}, {sin(\x)});
\draw (0.8,1) node[right] {$B(\t{0},\rho)$}; 
\draw [red] (-1.8,1) node[left] {$B(\t{x},\rho)$};
\draw (0,0) node[right] {$\t{0}$}; 
\draw (0,0) node {$\bullet$}; 
\draw (-1,0) node[left] {$\t{x}$}; 
\draw [red] (-1,0) node {$\bullet$};
\end{tikzpicture}
\caption{Overlap}
\label{Fig, overlap}
\end{figure}
Also noticing that the function $\exp\big(-\frac{|\t{y}|^{2}}{4t}\big)$ is radial in $\t{y}$, so there exists a constant $C=C(n)$ such that
\[\int_{B(\t{x},\rho)}\exp\Big(-\frac{|\t{y}|^{2}}{4t}\Big)\,d\t{y}\geq C\int_{B(\t{0},\rho)}\exp\Big(-\frac{|\t{y}|^{2}}{4t}\Big)\,d\t{y}.\]
By polar coordinates, 
\[\int_{B(\t{0},\rho)}\exp\Big(-\frac{|\t{y}|^{2}}{4t}\Big)\,d\t{y}=C\int_{0}^{\rho}r^{n-2}\exp\Big(-\frac{r^{2}}{4t}\Big)\,dr.\]
Thus, 
\begin{eqnarray}
\int_{0}^{2}\int_{\wt{B}(\rho)}\Phi(x-y,t)\,dS(y)\,dt &\geq & C\int_{0}^{2}t^{-n/2}\int_{0}^{\rho}r^{n-2}\exp\Big(-\frac{r^{2}}{4t}\Big)\,dr \notag\\
&=& C\,\phi_{n}(2,\rho), \label{reduce to 1-d bdry-time int}
\end{eqnarray}
where $\phi_{n}$ is defined as in (\ref{def of phi_n}).

\begin{itemize}
\item If $n\geq 3$, then it follows from $0<\rho\leq 1$ and Lemma \ref{Lemma, radial bdry-time int, large dim} that 
\be\label{lower bdd for bdry-time int, radius, large dim}
\phi_{n}(2,\rho)\geq C\rho.\ee

\item If $n=2$, then it follows from $0<\rho\leq 1$ and Lemma \ref{Lemma, radial bdry-time int, dim 2} that
\begin{eqnarray}
\phi_{2}(2,\rho)&\geq & C\rho\Big[1+\ln\Big(\frac{2}{\rho^{2}}\Big)\Big] \notag\\
&\geq & C\rho\ln\Big(1+\frac{1}{\rho}\Big). \label{lower bdd for bdry-time int, radius, dim 2}
\end{eqnarray}
\end{itemize}
Combining (\ref{reduce to 1-d bdry-time int}), (\ref{lower bdd for bdry-time int, radius, large dim}) and (\ref{lower bdd for bdry-time int, radius, dim 2}) together yields (\ref{lower bdd for bdry-time int, radius}).
\end{proof}

Now we will prove Lemma \ref{Lemma, radial bdry-time int, large dim} and Lemma \ref{Lemma, radial bdry-time int, dim 2} which have been used in the above argument.

\begin{proof}[{\bf Proof of Lemma \ref{Lemma, radial bdry-time int, large dim}}]
We will verify the conclusion  by considering two cases $0<T<R^{2}$ and $0<R^{2}\leq T$.
\begin{itemize}
\item Case 1: Let $0<T<R^{2}$ (see Figure \ref{Fig, Case 1}).
\begin{align*}
\phi_{n}(T,R) &=\bigg(\iint_{I}+\iint_{II}\bigg)\frac{r^{n-2}}{t^{n/2}}\exp\Big(-\frac{r^{2}}{4t}\Big)\,dr\,dt \notag\\
&\triangleq g_{1}(T,R)+g_{2}(T,R).
\end{align*}

\begin{figure}[htbp]
\centering 
\begin{tikzpicture}[scale=1]
\draw [->] (0,0)--(4,0) node [anchor=north west]{$r$};
\draw [->] (0,0)--(0,4) node [anchor=south west] {$t$};
\path (2,0) coordinate (R);
\path (0,2.56) coordinate (T);
\path (2,2.56) coordinate (A);
\draw (R) node[below] {$R$};
\draw (T) node[left] {$T$};
\draw (R)--(A);
\draw (T)--(A);
\draw (0,0) node[below] {(0,0)};
\draw [red, thick, domain=0:1.6] plot({\x},{\x^2});
\draw (0,1) node[right] {\small $t=r^{2}$}; 
\draw (0.5,1.8) node[right] {$I$};
\draw (1.4,0.9) node[below] {$II$};
\end{tikzpicture}
\caption{Case 1}
\label{Fig, Case 1}
\end{figure}

Based on Figure \ref{Fig, Case 1}, 
\[g_{1}(T,R)=\int_{0}^{T}\int_{0}^{\sqrt{t}}\frac{r^{n-2}}{t^{n/2}}\exp\Big(-\frac{r^{2}}{4t}\Big)\,dr\,dt.\]
In this region, \[e^{-1/4}\leq \exp\Big(-\frac{r^{2}}{4t}\Big)\leq 1.\]
Also notice that
\[\int_{0}^{T}\int_{0}^{\sqrt{t}}\frac{r^{n-2}}{t^{n/2}}\,dr\,dt=\frac{2\sqrt{T}}{n-1}.\]
Therefore,
\be\label{est on 1, case 1}
\frac{2\sqrt{T}}{e^{1/4}(n-1)}\leq g_{1}(T,R)\leq \frac{2\sqrt{T}}{n-1}.\ee

Again according to Figure \ref{Fig, Case 1}, 
\[g_{2}(T,R) =\int_{0}^{T}\int_{\sqrt{t}}^{R}\frac{r^{n-2}}{t^{n/2}}\exp\Big(-\frac{r^{2}}{4t}\Big)\,dr\,dt. \]
Consider the inner integral and use the change of variable $\rho\triangleq \frac{r}{2\sqrt{t}}$ for $r$,
\begin{eqnarray*}\int_{\sqrt{t}}^{R}\frac{r^{n-2}}{t^{n/2}}\exp\Big(-\frac{r^{2}}{4t}\Big)\,dr &\leq &\int_{0}^{\infty}\frac{r^{n-2}}{t^{n/2}}\exp\Big(-\frac{r^{2}}{4t}\Big)\,dr \\
&=& 2^{n-1}t^{-1/2}\int_{0}^{\infty}\rho^{n-2}e^{-\rho^{2}}\,d\rho\\
&=& \frac{C}{t^{1/2}}.
\end{eqnarray*}
As a result, 
\be\label{est on 2, case 1}
0\leq g_{2}(T,R)\leq C\sqrt{T}.\ee
Combining (\ref{est on 1, case 1}) and (\ref{est on 2, case 1}), the estimate (\ref{radial bdry-time int, large dim}) is justified.

\item Case 2: Let $T\geq R^{2}$ (see Figure \ref{Fig, Case 2}).

\begin{eqnarray*}\label{int 2}
\phi_{n}(T,R) &=& \bigg(\iint_{I}+\iint_{II}\bigg)\frac{r^{n-2}}{t^{n/2}}\exp\Big(-\frac{r^{2}}{4t}\Big)\,dr\,dt \notag\\
&\triangleq & h_{1}(T,R)+h_{2}(T,R).
\end{eqnarray*}

\begin{figure}[htbp]
\centering 
\begin{tikzpicture}[scale=1]
\draw [->] (0,0)--(4,0) node [anchor=north west]{$r$};
\draw [->] (0,0)--(0,4) node [anchor=south west] {$t$};
\path (1.5,0) coordinate (R);
\path (0,3) coordinate (T);
\path (1.5,3) coordinate (A);
\draw (R) node[below] {$R$};
\draw (T) node[left] {$T$};
\draw (R)--(A);
\draw (T)--(A);
\draw (0,0) node[below] {(0,0)};
\draw [red, thick, domain=0:1.5] plot({\x},{\x^2});
\draw (0,1) node[right] {\small $t=r^{2}$}; 
\draw (0.5,2) node[right] {$I$};
\draw (1.2,0.6) node[below] {$II$};
\end{tikzpicture}
\caption{Case 2}
\label{Fig, Case 2}
\end{figure}

Based on Figure \ref{Fig, Case 2}, 
\[h_{2}(T,R)=\int_{0}^{R}\int_{0}^{r^{2}}\frac{r^{n-2}}{t^{n/2}}\exp\Big(-\frac{r^{2}}{4t}\Big)\,dt\,dr.\]
Consider the inner integral and use a change of variable $y\triangleq r^{2}/t$ for $t$,
\[\int_{0}^{r^2}\frac{r^{n-2}}{t^{n/2}}\exp\Big(-\frac{r^{2}}{4t}\Big)\,dt=\int_{1}^{\infty}y^{n/2-2}\,e^{-y/4}\,dy=C.\]
Therefore,
\be\label{est on 2, case 2} h_{2}(T,R)=CR.\ee

Again according to Figure \ref{Fig, Case 2},
\be\label{formula for h_1}
h_{1}(T,R) =\int_{0}^{R}\int_{r^{2}}^{T}\frac{r^{n-2}}{t^{n/2}}\exp\Big(-\frac{r^{2}}{4t}\Big)\,dt\,dr. \ee
In this integral region, 
\be\label{bound for exp}
e^{-1/4}\leq \exp\Big(-\frac{r^{2}}{4t}\Big)\leq 1.\ee
On the other hand, by direct calculation,
\be\label{calc, large n}\int_{0}^{R}\int_{r^{2}}^{T}\frac{r^{n-2}}{t^{n/2}}\,dt\,dr=\frac{2R}{n-2}\Big[1-\frac{1}{n-1}\Big(\frac{R^2}{T}\Big)^{n/2-1}\Big].\ee
Since $T\geq R^{2}$, then
\be\label{bound for int}
\frac{2R}{n-1}\leq \int_{0}^{R}\int_{r^{2}}^{T}\frac{r^{n-2}}{t^{n/2}}\,dt\,dr\leq \frac{2R}{n-2}.\ee
Combining (\ref{formula for h_1}), (\ref{bound for exp}) and (\ref{bound for int}), we obtain
\be\label{est on 1, case 2}
\frac{2R}{e^{1/4}(n-1)}\leq h_{1}(T,R)\leq \frac{2R}{n-2}.\ee

Hence, (\ref{est on 1, case 2}) and (\ref{est on 2, case 2}) together justifies (\ref{radial bdry-time int, large dim}).
\end{itemize}
\end{proof}

\begin{proof}[{\bf Proof of Lemma \ref{Lemma, radial bdry-time int, dim 2}}]
The proof for (\ref{radial bdry-time int, dim 2, small T}) follows the same argument as Case 1 in the above proof for Lemma \ref{Lemma, radial bdry-time int, large dim}. The proof for (\ref{radial bdry-time int, dim 2, large T}) also follows the same argument as Case 2 in the above proof except the estimate on the term $h_{1}(T,R)$ in (\ref{formula for h_1}). Let's rewrite $h_{1}(T,R)$ when $n=2$ as below.
\be\label{formula for h_1, dim 2}
h_{1}(T,R) =\int_{0}^{R}\int_{r^{2}}^{T}t^{-1}\exp\Big(-\frac{r^{2}}{4t}\Big)\,dt\,dr.\ee
By direct calculations, 
\be\label{calc, n=2}
\int_{0}^{R}\int_{r^{2}}^{T}t^{-1}\,dt\,dr=\int_{0}^{R}\ln(T)-2\ln(r)\,dr=R\Big[\ln\Big(\frac{T}{R^{2}}\Big)+2\Big].\ee
Combining (\ref{formula for h_1, dim 2}), (\ref{calc, n=2}) and (\ref{bound for exp}), we get 
\be\label{est on 1, case 2, border case}
e^{-1/4}R\Big[\ln\Big(\frac{T}{R^{2}}\Big)+2\Big]\leq h_{1}(T,R)\leq R\Big[\ln\Big(\frac{T}{R^{2}}\Big)+2\Big].\ee
Hence, (\ref{est on 1, case 2, border case}) and (\ref{est on 2, case 2}) together justifies (\ref{radial bdry-time int, dim 2, large T}).
\end{proof}

\begin{appendix}
\section*{Appendix}
\section{Proof of the Representation Formula}
\label{Sec, proof for rep formula}
The goal of this appendix is to justify the representation formula (\ref{rep for soln, initial}) in Lemma \ref{Lemma, rep for soln, initial}. First, notice that in part (c) of Corollary \ref{Cor, prop of NHK}, it claims that for any $\psi$ in (\ref{compatible initial}), 
\be\label{conv for compatible initial}
\lim_{t\rightarrow 0^{+}}\int_{\O}N(x,y,t)\psi(y)\,dy=\psi(x), \quad\text{uniformly in $x\in\ol{\O}$}.\ee
However, in many situations, the compatibility condition $\frac{\p \psi}{\p n}=0$ in (\ref{compatible initial}) may not be satisfied. So next, we will provide a convergence result for all the functions $\psi$ in $C(\ol{\O})$. But the convergence will only be pointwise and only valid for the inside point $x\in\O$.

\begin{lemma}\label{Lemma, delta fn against initial, general}
Let $N(x,y,t)$ be the Neumann heat kernel of $\O$ as in Definition \ref{Def, NHK}. Then for any $\psi\in C(\ol{\O})$ and for any $x\in\O$, 
\be
\lim_{t\rightarrow 0^{+}}\int_{\O}N(x,y,t)\psi(y)\,dy=\psi(x).\ee
\end{lemma}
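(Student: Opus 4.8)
The plan is to reduce the claim to a standard concentration-of-mass argument. The two ingredients are the conservation identity $\int_{\O}N(x,y,t)\,dy=1$ from part (e) of Corollary \ref{Cor, prop of NHK}, and the Gaussian majorant $0\le N(x,y,t)\le C\,\Phi(x-y,2t)$ for $t\in(0,1]$ from Lemma \ref{Lemma, quant of NHK}. Note that an explicit formula for $N$ is unavailable, so the pointwise bound by $\Phi$ (valid only for small $t$, which is exactly the regime of interest) is what replaces a direct computation.

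First I would fix $x\in\O$ and $\eps>0$. Since $x$ is interior, there is $r_{0}>0$ with $B(x,r_{0})\subset\O$, and by continuity of $\psi$ at $x$ I can choose $\delta\in(0,r_{0})$ so that $|\psi(y)-\psi(x)|<\eps$ whenever $|y-x|<\delta$. Using $\int_{\O}N(x,y,t)\,dy=1$, I write, for $t\in(0,1]$,
\[
\int_{\O}N(x,y,t)\psi(y)\,dy-\psi(x)=\int_{\O}N(x,y,t)\big(\psi(y)-\psi(x)\big)\,dy,
\]
and split the domain of integration into $B(x,\delta)$ and $\O\setminus B(x,\delta)$.

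On $B(x,\delta)$ the integrand is bounded in absolute value by $\eps\,N(x,y,t)$, so by nonnegativity of $N$ and the conservation identity that piece contributes at most $\eps$. On $\O\setminus B(x,\delta)$ I bound $|\psi(y)-\psi(x)|\le 2\sup_{\ol\O}|\psi|$ and apply Lemma \ref{Lemma, quant of NHK} to get
\[
\int_{\O\setminus B(x,\delta)}N(x,y,t)\,dy\le C\int_{\{|z|\ge\delta\}}\Phi(z,2t)\,dz,
\]
and the right-hand side tends to $0$ as $t\rightarrow 0^{+}$ by the usual Gaussian tail estimate (substitute $z=2\sqrt{t}\,w$). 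Hence the second piece is $o(1)$ as $t\rightarrow 0^{+}$, and combining the two bounds gives $\limsup_{t\rightarrow 0^{+}}\big|\int_{\O}N(x,y,t)\psi(y)\,dy-\psi(x)\big|\le\eps$; since $\eps>0$ is arbitrary, the limit equals $\psi(x)$.

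There is no essential obstacle in this argument; the only delicate point is conceptual rather than technical, namely that the absence of an explicit expression for $N$ forces the use of the small-time majorant in Lemma \ref{Lemma, quant of NHK} in place of a direct Gaussian computation, and correspondingly the statement is about the limit as $t\rightarrow 0^{+}$ rather than a uniform estimate. The hypothesis $x\in\O$ (rather than $x\in\ol\O$) enters only to guarantee a full ball $B(x,\delta)\subset\O$ on which continuity of $\psi$ can be exploited; it plays no further role in the estimates.
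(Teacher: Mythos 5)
Your proof is correct, but it takes a genuinely different route from the paper's. You run a direct approximate-identity argument: the normalization $\int_{\O}N(x,y,t)\,dy=1$ from part (e) of Corollary \ref{Cor, prop of NHK} lets you subtract $\psi(x)$ inside the integral, nonnegativity of $N$ plus continuity of $\psi$ handle the region $B(x,\delta)$, and the Gaussian majorant of Lemma \ref{Lemma, quant of NHK} makes the mass on $\O\setminus B(x,\delta)$ vanish as $t\rightarrow 0^{+}$ via the Gaussian tail. The paper argues differently: it approximates $\psi$ by mollified truncations $\psi_{J}$ that are smooth and compactly supported in $\O$, hence satisfy the compatibility condition (\ref{compatible initial}); it then invokes the uniform convergence (\ref{conv for compatible initial}) already built into Corollary \ref{Cor, prop of NHK}(c) for $\psi_{J}$, and controls the error $\int_{\O}N(x,y,t)\,|\psi-\psi_{J}|\,dy$ by splitting $\O$ into a compact set $K$ with $|\O\setminus K|<\eps$ (using total mass at most $1$ and uniform convergence $\psi_{J}\rightarrow\psi$ on $K$) and $\O\setminus K$ (using the Gaussian bound together with the positive distance $d_{x}=\text{dist}(x,\p\O)$). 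Your argument is shorter, avoids the mollification machinery entirely, and—contrary to your closing remark—never actually uses the interiority of $x$: continuity of $\psi$ on $\ol{\O}$ already gives $|\psi(y)-\psi(x)|<\eps$ for $y\in\ol{\O}$ with $|y-x|<\delta$, and parts (d), (e) of Corollary \ref{Cor, prop of NHK} as well as Lemma \ref{Lemma, quant of NHK} hold for all $x\in\ol{\O}$, so your proof in fact yields the conclusion for every $x\in\ol{\O}$ (and, by uniform continuity of $\psi$ on the compact set $\ol{\O}$, even uniformly in $x$), which is slightly stronger than the lemma as stated. The paper's approach, by contrast, genuinely needs $x\in\O$, since the positive distance $d_{x}$ enters its tail estimate on $\O\setminus K$.
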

\begin{proof}
Fix any $x\in\O$. Then the distance $d_{x}$ from $x$ to the boundary $\p\O$ is a fixed positive number. Let 
\[M=\max\limits_{y\in\ol{\O}}|\psi(y)|.\] 
Then $M$ is finite. Choose any mollifier $\eta(y)\in C^{\infty}(\m{R}^{n})$ with support in the unit ball and $\int_{\m{R}^{n}}\eta(y)\,dy=1$. For any $j\geq 1$, denote
\[\O_{1/j}=\Big\{y\in\O: \text{dist}(y,\,\p\O)>\frac{1}{j}\Big\}\]
and define $\varphi_{j}$ on $\m{R}^{n}$ as 
\[\varphi_{j}(y)=\left\{\begin{array}{ll}
\psi(y), & y\in\O_{1/j},\\
0, & y\notin \O_{1/j}.
\end{array}\right.\]
In addition, denote $\eta_{j}(y)=j^{n}\,\eta(jy)$ and define
$\psi_{j}(y)=(\eta_{2j}\ast\varphi_{j})(y)$. Then we know 
\begin{enumerate}[(a)]
\item $\psi_{j}\in C^{\infty}(\m{R}^{n})$ with support in $\O_{1/(2j)}$.
\item $|\psi_{j}(y)|\leq M$ for any $j\geq 1$ and $y\in\m{R}^{n}$.
\item For any compact subset $K$ in $\O$, $\psi_{j}$ uniformly converges to $\psi$ in $K$ as $j\rightarrow \infty$. 
\end{enumerate}

Now given any $\eps>0$, we choose a compact subset $K\subset\O$ such that $x\in K$, $\text{dist}(x,\p K)>d_{x}/2$ and the volume $|\O\setminus K|<\eps$ (see Figure \ref{Fig, Compact Subset $K$}).
\begin{figure}[htbp]
\centering 
\begin{tikzpicture}[scale=1]
\draw [thick, domain=0:180] plot ({3*cos(\x)}, {2*sin(\x)});
\draw [thick, domain=180:360] plot ({3*cos(\x)}, {2*sin(\x)});
\draw [red, thick, domain=0:180] plot ({2.6*cos(\x)}, {1.6*sin(\x)});
\draw [red, thick, domain=180:360] plot ({2.6*cos(\x)}, {1.6*sin(\x)});
\draw (-0.3,2.3) node[right] {$\O$}; 
\draw (-0.6,1.2) [red] node[right] {$K$}; 
\draw (-1.3,0.2) node[right] {$x$}; 
\draw (-1.3,0.3) node {$\bullet$};
\end{tikzpicture}
\caption{Compact Subset $K$}
\label{Fig, Compact Subset $K$}
\end{figure}
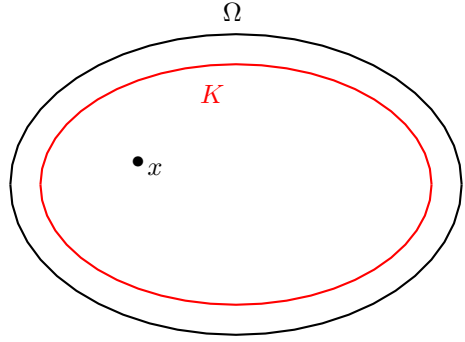
Fix this domain $K$. Then $\psi_{j}$ convergens to $\psi$ uniformly on $K$. Thus, there exists $J$ such that 
\be\label{uniform conv}
|\psi_{j}(y)-\psi(y)|<\eps,\quad\forall\, j\geq J,\, y\in K. \ee
Fix this $J$. Since $\psi_{J}$ satisfies (\ref{compatible initial}), then it follows from (\ref{conv for compatible initial}) that 
\[\lim_{t\rightarrow 0^{+}}\int_{\O}N(x,y,t)\psi_{J}(y)\,dy=\psi_{J}(x).\]
So there exists $0<\delta<1$ such that for any $t\in(0,\delta)$, 
\[\Big|\int_{\O}N(x,y,t)\psi_{J}(y)\,dy-\psi_{J}(x)\Big|<\eps.\]

Combining all the above results, we have that for any $t\in(0,\delta)$, 
\begin{eqnarray}
&&\quad \Big|\int_{\O}N(x,y,t)\psi(y)\,dy-\psi(x)\Big| \notag\\
&&\leq \int_{\O}N(x,y,t)\big|\psi(y)-\psi_{J}(y)\big|\,dy+\Big|\int_{\O}N(x,y,t)\psi_{J}(y)\,dy-\psi_{J}(x)\Big|+|\psi_{J}(x)-\psi(x)| \notag\\
&&\leq \int_{\O}N(x,y,t)\big|\psi(y)-\psi_{J}(y)\big|\,dy+2\eps. \label{three sum for conv}
\end{eqnarray}
In order to estimate the first  integral term, we split $\O$ into $K$ and $\O\setminus K$. On $K$, noticing (\ref{uniform conv}) and part (e) in Corollary \ref{Cor, prop of NHK}, so 
\be\label{est on K}
\int_{K}N(x,y,t)\big|\psi(y)-\psi_{J}(y)\big|\,dy \leq \eps\int_{K}N(x,y,t)\,dy\leq \eps. \ee
On $\O\setminus K$, since both $|\psi|$ and $|\psi_{J}|$ are bounded by $M$, 
\[\int_{\O\setminus K}N(x,y,t)\big|\psi(y)-\psi_{J}(y)\big|\,dy \leq 2M\int_{\O\setminus K}N(x,y,t)\,dy.\]
Then by Lemma \ref{Lemma, quant of NHK}, there exists a constant $C=C(n,\O)$ such that
\[\int_{\O\setminus K}N(x,y,t)\big|\psi(y)-\psi_{J}(y)\big|\,dy \leq 2CM\int_{\O\setminus K}\Phi(x-y,2t)\,dy.\]
Since $\text{dist}(x,\p K)>d_{x}/2$, then $|y-x|>d_{x}/2$ for any $y\in\O\setminus K$. Consequently,
\[\Phi(x-y,2t)\leq \frac{C}{t^{n/2}}\exp\Big(-\frac{d_{x}^{2}}{32t}\Big)\leq \frac{C}{d_{x}^{n}}.\]
Hence, 
\be\label{est on K complement}
\int_{\O\setminus K}N(x,y,t)\big|\psi(y)-\psi_{J}(y)\big|\,dy\leq \frac{CM}{d_{x}^{n}}\,|\O\setminus K|\leq \frac{CM\eps}{d_{x}^{n}}.\ee
Based on (\ref{three sum for conv}), (\ref{est on K}) and (\ref{est on K complement}), we conclude the proof.
\end{proof}

Before justifying the representation formula, it is helpful to discuss an auxiliary result. 

\begin{lemma}\label{Lemma, conv of bdry-time int of NHK}
Let $N(x,y,t)$ be the Neumann heat kernel of $\O$ as in Definition \ref{Def, NHK}. Let $\Gamma_{1}\subset\p\O$ be any part on the boundary. Then for any $x\in\ol{\O}$, $t>0$ and any continuous function $g$ on $\ol{\Gamma}_{1}\times[0,t]$, 
\[\lim_{\eps\rightarrow 0^{+}}\int_{0}^{t}\int_{\Gamma_1}N(x,y,t+\eps-\tau)\,g(y,\tau)\,dS(y)\,d\tau=\int_{0}^{t}\int_{\Gamma_1}N(x,y,t-\tau)\,g(y,\tau)\,dS(y)\,d\tau.\]
\end{lemma}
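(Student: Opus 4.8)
The plan is to split the $\tau$-integral at $\tau=t-\delta$ for a small parameter $\delta\in(0,1/2)$, isolating a ``bulk'' piece on $[0,t-\delta]$, where the time argument $t+\eps-\tau$ of $N$ stays bounded away from $0$ and $N$ is jointly continuous, from a ``tail'' piece on $[t-\delta,t]$, where $N(x,y,\cdot)$ is singular at $0$ but its boundary integral is controlled by Lemma \ref{Lemma, quant of NHK} together with Lemma \ref{Lemma, bdd for bdry-time int}. Throughout I restrict to $0<\eps\le 1/2$ and set $M_{g}=\max_{\ol{\Gamma}_{1}\times[0,t]}|g|<\infty$.

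For the bulk piece, on $[0,t-\delta]$ one has $t+\eps-\tau\in[\delta,\,t+1]$, so, by part (a) of Corollary \ref{Cor, prop of NHK}, $N$ is uniformly continuous on the compact set $\ol{\O}\times\ol{\O}\times[\delta,t+1]$; hence $N(x,y,t+\eps-\tau)\to N(x,y,t-\tau)$ as $\eps\to 0^{+}$, uniformly in $(y,\tau)\in\Gamma_{1}\times[0,t-\delta]$. Since $|g|\le M_{g}$ and $|\Gamma_{1}|<\infty$, it follows that the bulk integral over $[0,t-\delta]$ converges, as $\eps\to 0^{+}$, to $\int_{0}^{t-\delta}\int_{\Gamma_{1}}N(x,y,t-\tau)\,g(y,\tau)\,dS(y)\,d\tau$.

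For the tail piece, using $N\ge 0$, the substitution $s=t+\eps-\tau$, Tonelli's theorem, Lemma \ref{Lemma, quant of NHK} (applicable because $s\le\delta+\eps\le 1$), and finally Lemma \ref{Lemma, bdd for bdry-time int} with $\Gamma_{1}\subseteq\p\O$, I obtain a constant $\tilde{C}=\tilde{C}(n,\O)$ with
\[
\int_{t-\delta}^{t}\!\!\int_{\Gamma_{1}}N(x,y,t+\eps-\tau)\,|g(y,\tau)|\,dS(y)\,d\tau\;\le\; M_{g}\!\int_{0}^{\delta+\eps}\!\!\int_{\Gamma_{1}}N(x,y,s)\,dS(y)\,ds\;\le\;\tilde{C}\,M_{g}\,\sqrt{\delta+\eps},
\]
and the same chain of estimates with $\eps=0$ gives the bound $\tilde{C}M_{g}\sqrt{\delta}$ for the tail of the right-hand integral in the lemma; in particular that integral is absolutely convergent.

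Putting the pieces together is then a routine $\mu/3$ argument: given $\mu>0$, first fix $\delta$ so small that $\tilde{C}M_{g}\sqrt{2\delta}<\mu/3$; then both tails are $<\mu/3$ for every $\eps\in(0,\delta)$, and the bulk convergence lets me choose $\eps$ small enough that the bulk integrals differ by less than $\mu/3$, whence the triangle inequality finishes the proof. The main (and essentially the only) obstacle is the time-singularity of $N(x,y,s)$ as $s\to 0^{+}$, i.e. the behaviour of the integrand near $\tau=t$; this is exactly where Lemma \ref{Lemma, quant of NHK} and Lemma \ref{Lemma, bdd for bdry-time int} are used, converting the a priori divergent-looking tail into an $O(\sqrt{\delta})$ quantity that is uniform in $\eps$.
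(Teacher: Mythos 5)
Your proposal is correct and follows essentially the same route as the paper: split the time integral at $\tau=t-\delta$, use continuity of $N$ for the bulk, and control the tail uniformly in $\eps$ via Lemma \ref{Lemma, quant of NHK} and Lemma \ref{Lemma, bdd for bdry-time int}, yielding an $O(\sqrt{\delta})$ bound. The only cosmetic differences are your change of variable $s=t+\eps-\tau$ and the explicit $\mu/3$ bookkeeping (and you should, as the paper does, also take $\delta<t/2$ so the split is well defined), neither of which changes the argument.
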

\begin{proof}
For any $0<\delta<\min\big\{\frac{t}{2}, \frac{1}{4}\big\}$, we split the integral $\int_{0}^{t}$ into $\int_{0}^{t-\delta}$ and $\int_{t-\delta}^{t}$. On $\int_{0}^{t-\delta}$, due to the continuity of $N$ and $g$, it is obvious that 
\[\lim_{\eps\rightarrow 0^{+}}\int_{0}^{t-\delta}\int_{\Gamma_1}N(x,y,t+\eps-\tau)\,g(y,\tau)\,dS(y)\,d\tau=\int_{0}^{t-\delta}\int_{\Gamma_1}N(x,y,t-\tau)\,g(y,\tau)\,dS(y)\,d\tau.\]
Hence, it suffices to prove 
\be\label{unif small}
\lim_{\delta\rightarrow 0^{+}}\int_{t-\delta}^{t}\int_{\Gamma_1}N(x,y,t+\eps-\tau)\,g(y,\tau)\,dS(y)\,d\tau=0 \quad\text{uniformly for $0\leq \eps\leq \frac{1}{4}$}.\ee
In fact, let 
\[M=\max_{\ol{\Gamma}_{1}\times[0,t]}|g|.\]
Then for $0\leq \eps\leq 1/4$ and $0<\delta<\min\big\{\frac{t}{2}, \frac{1}{4}\big\}$, it follows from Lemma \ref{Lemma, quant of NHK} that 
\[\bigg|\int_{t-\delta}^{t}\int_{\Gamma_1}N(x,y,t+\eps-\tau)\,g(y,\tau)\,dS(y)\,d\tau\bigg| \leq CM\int_{t-\delta}^{t}\int_{\Gamma_1}\Phi\big(x-y,2(t+\eps-\tau)\big)\,dS(y)\,d\tau\]
for some constant $C=C(n,\O)$. Applying Lemma \ref{Lemma, bdd for bdry-time int}, then 
\[\int_{\Gamma_1}\Phi\big(x-y,2(t+\eps-\tau)\big)\,dS(y)\leq C\,(t+\eps-\tau)^{-1/2}\leq C\,(t-\tau)^{-1/2}.\]
As a result,
\[\int_{t-\delta}^{t}\int_{\Gamma_1}\Phi\big(x-y,2(t+\eps-\tau)\big)\,dS(y)\,d\tau \leq C\int_{t-\delta}^{t}(t-\tau)^{-1/2}\,d\tau=2C\,\delta^{1/2},\]
which justifies (\ref{unif small}).
\end{proof}

\begin{proof}[{\bf Proof of Lemma \ref{Lemma, rep for soln, initial}}]
We will first consider the case $x\in\O$ and then the case $x\in\p\O$.
\begin{itemize}
\item Fix any $x\in\O$, $t\in(0,T^{*})$ and $\eps>0$. We define $\phi^{\eps}:\ol{\O}\times[0,t]\rightarrow\m{R}$ by
\[\phi^{\eps}(y,\tau)=N(x,y,t+\eps-\tau).\]
One can see that $\phi^{\eps}$ is $C^{2}$ in $y$ ($y\in\ol{\O}$) and $C^{1}$ in $\tau$ ($\tau\in [0,t]$). In addition, 
\begin{eqnarray*}
(\p_{\tau}+\Delta_{y})\phi^{\eps}(y,\tau)&=&(-\p_{t}+\Delta_{y})N(x,y,t+\eps-\tau)\\
&=& (-\p_{t}+\Delta_{y})N(y,x,t+\eps-\tau)=0.
\end{eqnarray*}
On the other hand, since $u$ is the classical solution to (\ref{Prob}), it is also the weak solution according to (\cite{YZ16}, Definition 3.4 and Theorem 3.7). Then by choosing $\phi=\phi^{\eps}$ in Definition 3.4 in \cite{YZ16}, we have 
\[\begin{split}
0=&\int_{\O}\phi^{\eps}(y,t)\,u(y,t)-\phi^{\eps}(y,0)\,u_0(y)\,dy -\int_{0}^{t}\int_{\Gamma_1}\phi^{\eps}(y,\tau)\,u^{q}(y,\tau)\,dS(y)\,d\tau\\
&+\int_{0}^{t}\int_{\p\O}u(y,\tau)\,\frac{\p\phi^{\eps}(y,\tau)}{\p n(y)}\,dS(y)\,d\tau.
\end{split} \]
Plugging $\phi^{\eps}(y,\tau)=N(x,y,t+\eps-\tau)$ into the above equality and noticing that 
\[\frac{\p\phi^{\eps}(y,\tau)}{\p n(y)}=\frac{\p N(x,y,t+\eps-\tau)}{\p n(y)}=\frac{\p N(y,x,t+\eps-\tau)}{\p n(y)}=0,\]
we obtain 
\[\begin{split}
0=&\int_{\O}N(x,y,\eps)\,u(y,t)-N(x,y,t+\eps)\,u_0(y)\,dy \\
&-\int_{0}^{t}\int_{\Gamma_1}N(x,y,t+\eps-\tau)\,u^{q}(y,\tau)\,dS(y)\,d\tau.
\end{split} \]
Sending $\eps\rightarrow 0^{+}$ and applying Lemma \ref{Lemma, delta fn against initial, general} and Lemma \ref{Lemma, conv of bdry-time int of NHK} yields (\ref{rep for soln, initial}).

\item Fix any $x\in\p\O$ and $t\in(0,T^{*})$. We choose a sequence $(x_{j})_{j\geq 1}$ such that $x_{j}\in\O$ and $x_{j}\rightarrow x$. Then from the above argument, it follows from (\ref{rep for soln, initial}) that for each $j\geq 1$,
\[u(x_{j},t)=\int_{\O}N(x_{j},y,t)u_{0}(y)\,dy+\int_{0}^{t}\int_{\Gamma_{1}}N(x_{j},y,t-\tau)u^{q}(y,\tau)\,dS(y)\,d\tau.\]
Since $t>0$, $N(x,y,t)$ is $C^{2}$ in $x$ on $\ol{\O}$. Sending $j\rightarrow\infty$, then $u(x_{j},t)$ converges to $u(x,t)$ and $\int_{\O}N(x_{j},y,t)u_{0}(y)\,dy$ converges to $\int_{\O}N(x,y,t)u_{0}(y)\,dy$. Finally, by similar argument as in the proof of Lemma \ref{Lemma, conv of bdry-time int of NHK}, we have
\[\int_{0}^{t}\int_{\Gamma_{1}}N(x_{j},y,t-\tau)u^{q}(y,\tau)\,dS(y)\,d\tau\rightarrow \int_{0}^{t}\int_{\Gamma_{1}}N(x,y,t-\tau)u^{q}(y,\tau)\,dS(y)\,d\tau.\]
\end{itemize}
\end{proof}
\end{appendix}

\bibliographystyle{plain}
\bibliography{Ref-Parabolic}

\end{document}